\newtheorem{theo}{Theorem}[section]
\newtheorem{coro}[theo]{Corollary}
\newtheorem{prop}[theo]{Proposition}
\newtheorem{lemma}[theo]{Lemma}
\theoremstyle{remark}
\newtheorem{remark}{\bf Remark}
\def\h{{\mathbb H}}
\begin{document}

\title{Quasi-abelian group as automorphism group of Riemann surfaces}

\author{Rub\'en A. Hidalgo, Yerika Mar\'in Montilla and Sa\'ul Quispe}

\subjclass[2010]{30F10, 14H37, 14H57, 20H10, 30F50}  
\keywords{Riemann surfaces, Klein surfaces, Automorphisms, NEC groups, Dessins d'enfants}

\address{Departamento de Matem\'atica y Estad\'{\i}stica, Universidad de La Frontera. Temuco, Chile}
\email{ruben.hidalgo@ufrontera.cl}
\email{yerika.marin@ufrontera.cl}
\email{saul.quispe@ufrontera.cl}
\thanks{Partially supported by Projects FONDECYT Regular N. 1220261 and 1230001. The second author has been supported by ANID/Beca de Doctorado Nacional/ 21190335.}

\begin{abstract}
Conformal/anticonformal actions of the quasi-abelian group $QA_{n}$ of order $2^n$, for $n\geq 4$, on closed Riemann surfaces, pseudo-real Riemann surfaces and closed Klein surfaces are considered. We obtain several consequences, such as the solution of the minimum genus problem for the $QA_n$-actions, and for each of these actions, we study the topological rigidity action problem. In the case of pseudo-real surfaces, attention was typically restricted to group actions that admit anticonformal elements. In this paper we consider two cases: either $QA_n$ has anticonformal elements or only contains conformal elements.
\end{abstract}

\maketitle

%%%%%%%%%%%%%%%%%%%%
%%%%%%%%%%%%%%%%%%%%
\section{Introduction}
In \cite{Greenberg}, Greenberg proved that every finite group $G$ can be realized as a subgroup of the group ${\rm Aut}^{+}(S)$ of conformal automorphisms of a suitable closed Riemann surface $S$ (we say that $G$ provides a {conformal action} on $S$). The {\it strong symmetric genus} $\sigma^{0}(G)$ is the minimal genus of Riemann surfaces admitting a conformal action of $G$ \cite{MZ7,May3,May10}. The ones with $\sigma^0(G)=0$ correspond to the finite groups of M\"obius transformations: trivial, cyclic, dihedral and one of the Platonic symmetric groups. The description of those with $\sigma^0(G)=1$ is provided in \cite{GroT}. If $\sigma^{0}(G) \geq 2$, then it follows from the Hurwitz's bound \cite{Hurwitz} that $|G| \leq 84(\sigma^{0}(G)-1)$.
In \cite{Conder:lista}, Conder has listed the value of $\sigma^{0}(G)$ for those groups $G$ of order at most $127$. It is also known that every integer $g \geq 2$ is the strong symmetric genus for some finite group \cite{May3}. 

Let us assume we have two conformal actions of $G$ on closed Riemann surfaces $S_1$ and $S_2$ of the same genus $g$, i.e., $G \cong G_{1}<{\rm Aut}^+(S_{1})$ and $G \cong G_{2}<{\rm Aut}^+(S_{2})$. These actions are called {\it topologically equivalent} if there is an orientation preserving homeomorphism $\phi:S_{1} \to S_{2}$ such that $\phi G_{1} \phi^{-1}=G_{2}$.

There are finite groups $G$ with different topological actions in genus $\sigma^{0}(G)$. Examples of this situation can be found in Conder's list \cite{Conder:lista} (there are actions with different signatures). Note that, even if two conformal actions have the same signature, they might be topologically non-equivalent.
In \cite{HMQ22}, we have proved that, for the case of generalized quasi-dihedral group $G_n$, there is only one topological action in its strong symmetric genus.
In this paper, following similar ideas, we obtain the same rigidity result for the quasi-abelian group $QA_{n}$ of order $2^{n}$ (Corollary \ref{strong}); this action is produced on non-hyperelliptic Riemann surfaces (contrary to the case of the action of generalized quasi-dihedral group \cite[\S 3]{HMQ22}). In \cite{MZ7}, it was proved that $\sigma^{0}(QA_{n})=2^{n-2}-1$.
In Remark \ref{re1c}, we provide a description of the associated dessins d'enfants \cite{Gro} in terms of the corresponding monodromy group and the associated bipartite graph. In Theorem \ref{VJ}, we describe the isotypical decomposition \cite{AR} of the Jacobian variety $J_S$, induced by the triangular action of $QA_{n}$. 

A conformal action of $G$ is called {\it purely-non-free} if every of its element has fixed points.
In \cite{BGH}, it was observed that there is some Riemann surface on which $G$ acts purely-non-free. The {\it pure symmetric genus}  $\sigma_{p}(G)$ is the minimal genus on which $G$ acts purely-non-free. In general, $\sigma^{0}(G)$ might be much smaller that $\sigma_{p}(G)$. In \cite{HMQ22}, for the group $G_n$ we obtained that $\sigma^{0}(G_n)<\sigma_{p}(G_n)$ with $n$ odd and $\sigma^{0}(G_n)=\sigma_{p}(G_n)$ for $n$ even. In this paper, as consequence of Theorem \ref{pta} we observe that the equality also holds for the group $QA_{n}$. 

The group $QA_{n}$ has (up to automorphisms of the group) two index two subgroups $H_{1} \cong C_{2^{n-2}} \times C_{2}$ and $H_{2} \cong C_{2^{n-1}}$. In particular, this asserts that, for each $j=1,2$,  there are Riemann surfaces $S$, such that $QA_{n}<{\rm Aut}(S)$ and $H_{j}=QA_{n} \cap {\rm Aut}^{+}(S)$ (here, ${\rm Aut}(S)$ denotes the group of conformal and anticonformal automorphisms of $S$). 
As a consequence of Proulx class \cite{GroT},  the symmetric genus of $QA_{n}$ (i.e., the minimal genus of closed Riemann surfaces admitting it as a group of conformal/anticonformal automorphisms) is $\sigma(QA_{n})=1$. Let us denote by $\sigma^{hyp}(QA_{n},H_{j}) \geq 2$ the next genus as above.
We set $\sigma^{hyp}(QA_{n})={\rm Min}\{\sigma^{hyp}(QA_{n},H_{1}), \sigma^{hyp}(QA_{n},H_{2})\}$, which we call the {\it symmetric hyperbolic genus} of $QA_{n}$.
In Theorem \ref{hyp1}, we compute $\sigma^{hyp}(QA_{n},H_{j})$, from which we obtain that $\sigma^{hyp}(QA_{n})=2^{n-3}$ (Corollary \ref{hyp}). 
In Theorem \ref{tumshg}, we observe that these conformal/anticonformal actions of $QA_{n}$, in these minimal genera, are also topologically rigid. 

Another related topic that we will study in this paper is the action of finite groups on {\it pseudo-real Riemann surfaces}. Pseudo-real Riemann surfaces are those admitting anticonformal automorphisms, but none of order two. These surfaces correspond to the real points of moduli space of closed Riemann surfaces of given genus which cannot be defined over the reals. If $G$ is a finite group, admitting an index two subgroup $H$ containing all the involutions of $G$, then there is some pseudo-real Riemann surface $S$ such that $G<{\rm Aut}(S)$ and $H=G \cap {\rm Aut}^{+}(S)$. The minimal genus of such pseudo-real Riemann surfaces is denoted by $\psi^*(G)$ and called the {\em strong pseudo-real genus} of $G$. The {\em pseudo-real genus} $\psi(G)$ of $G$ is the minimum genus of pseudo-real Riemann surfaces on which $G<{\rm Aut}(S)$ but now we allow the possibility that every element of $G$ is conformal (so $\psi(G)\leq \psi^*(G)$). These parameters $\psi^{*}(G)$ and $\psi(G)$ have been studied by Conder and Lo in \cite{CL}.
In Theorem \ref{tps0}, we construct pseudo-real Riemann surfaces $S$ for which $QA_{n} \leq {\rm Aut}(S)$ and $H_{1}=QA_{n} \cap {\rm Aut}^{+}(S)$. As consequence of the Proposition \ref{QAgo}, we can observe that there are no pseudo-real Riemann surfaces of genus even with full automorphism group $QA_n$. In Theorems \ref{mps+0} and \ref{psw}, we determine that $\psi^*(QA_n)=\psi(QA_n)=2^{n-1}-1$, and in Proposition \ref{Ups0}, we observe that this minimal action is not topologically rigid (contrary to the topologically rigid of the $G_n$-action \cite{HMQ22}). 
In Theorem \ref{bpsQAn}, we observe that the order of the largest quasi-abelian group of automorphisms of a pseudo-real surfaces of genus $g$ is $2(g+1)$.

In the above, we have considered pseudo-real Riemann surfaces for which $QA_{n}$ necessarily admits anticonformal automorphisms. {Previously in \cite[Theorem 7.14]{HMQ22}, we observe that for $n$ even, there is no pseudo-real Riemann surfaces with group $G_n$ as its full group of conformal automorphisms}. Now, we may wonder for pseudo-real Riemann surfaces for which $QA_{n}$ only has conformal automorphisms. 

For $n \geq 4$, we consider the non-abelian group
$\label{gKn}K_n=\langle a, b:\ a^{8}=1, b^{2^{n-2}}=a^4, bab^{-1}=a^{\alpha}\rangle$,
with $\alpha=-1$ for $n$ even and $\alpha=3$ for $n$ odd. The group $K_n$ has order $2^{n+1}$ and it contains to the group $\langle b, a^2b^{2^{n-3}}\rangle\cong QA_n$ as an index two subgroup (both with the same number of involutions). In Theorem \ref{psrqan}, we observe that 
there are pseudo-real Riemann surfaces $S$ of genus $g=3(2^{n-1}-1)$ for $n\geq 5$ and $g=17$ for $n=4$, such that ${\rm Aut}(S)=K_{n}$ and ${\rm Aut}^{+}(S)=QA_{n}$. 

The group $K_{n}$ has two index two subgroups, one is $QA_{n}$ and the other $C_{2^{n-2}}\times C_4\cong K_{n}^{+}= \langle b^{-2}, a\rangle$ for $n\not=4$ ($C_{8}\times C_2\cong K^{+}_{4}=\langle a, b^4\rangle$ for $n=4$). The groups $K_n$, $QA_n$ and $K_n^+$ have three involutions. If $L \in \{QA_{n},K_{n}^{+}\}$, then 
let us denote by $\sigma_{ps}(K_n, L)$ the minimal genus of a pseudo-real Riemann surface $S$ such that $K_{n}<{\rm Aut}(S)$ and $L=K_{n} \cap {\rm Aut}^{+}(S)$.  In Theorem \ref{psKn}, we obtain that  $\sigma_{ps}(K_{n}, QA_{n})=3(2^{n-1}-1)$ if $n \geq 5$ (for $n=4$ such a value is $17$) and $\sigma_{ps}(K_{n}, K^{+}_{n})=2^{n}+1$. In 
particular,  $\psi^*(K_n)=2^{n}+1$ (Corollary \ref{pkn}). 

In the final section, we study the {\em symmetric crosscap number} $\tilde{\sigma}(QA_{n})$ (i.e., the minimal topological genus of closed Klein surfaces admitting it as a conformal/anticonformal action). Proceeding in a similar fashion as in \cite{HMQ22}, we obtain that $\tilde{\sigma}(QA_{n})=2^{n-2}+2$ in Theorem \ref{ncqa}, and that this minimal action is topologically rigid, dissimilar from the non-uniqueness of the action of the group $G_n$ on this minimal genera.

%%%%%%%%%%%%%%%%%%
%%%%%%%%%%%%%%%%%%
\section{Preliminaries}
%%%%%%%%%%%%%%%%%%
Throughout this paper we denote by $C_n$ the cyclic group of order $n$, by $C_m\times C_n$ the abelian group of order $mn$ and by $C_n^2$ the abelian group $C_n\times C_n$.

%%%%%%%%%%%%%%%%%%
\subsection{Quasi-abelian group}\label{Section 1}
The {\it quasi-abelian group} of order $2^n$, where $n \geq 4$ is an integer, is the non-abelian group with the following presentation
 \begin{equation}\label{g1}
QA_{n}=\langle x,y:\ x^{2^{n-1}}=y^{2}=1, \,  [x,y]=x^{ 2^{n-2}}\rangle.
\end{equation} 
 
Using the relation $yx^{k}=x^{(2^{n-2}+1)k}y$, for $k \geqslant 1$, one may check the following properties:
 
\setlength{\leftmargini}{8mm}
\begin{itemize}
\item[(a)] Every element has a unique presentation of the form $y^{j}x^{i}$, where $j$ and $i$ are integers with $ j \in\lbrace 0, 1\rbrace$ and $0\leq i < 2^{n-1}$, and these have order
$$\mid y^{j}x^{i}\mid =\dfrac{2^{n-1}}{{\rm gcd}(i, 2^{n-1})};\quad  \mid y\mid=2.$$

 \item[(b)] The index two subgroups of $QA_{n}$ are exactly the following ones
 \begin{center}$  H_1=\langle x^{2}, y \rangle \cong C_{2^{n-2}} \times C_{2},\quad H_2=\langle x\rangle \cong C_{2^{n-1}}, \quad \ H_3=\langle yx \rangle\cong  C_{2^{n-1}}.$\end{center}
 \item[(c)] The number of involutions of the groups $QA_{n}$ and $H_1$ is three and, the number of involutions of the subgroups $H_i$ isomorphic to $C_{2^{n-1}}$ is one.
 
\item[(d)]  The conjugacy classes of $QA_{n}$ are exactly $5\cdot 2^{n-3}$, with representatives given in the following table 
\begin{center}
\scalebox{0.75}{\begin{tabular}{|c|c|c|c|c|c|c|c|c|c|c|c|c|c|c|c|c|c|c|c|c|}
\hline 
Rep. & $1$ & $x$ &$x^2$ & $\cdots$ & $x^{2^{n-2}-1}$ &$x^{2^{n-2}}$ & $x^{2^{n-2}+2}$ & $\cdots$ & $x^{2^{n-1}-2}$ &  $y$&$yx$ &$yx^{2}$ & $\cdots$ & $yx^{2^{n-2}-2}$ &$yx^{2^{n-2}-1}$ \\ 
\hline 
size & 1 & 2 & 1& $\cdots$ & 2 &1  & 1 & $\cdots$ &1&  2& 2 & 2& $\cdots$ & 2 & 2\\ 
\hline 
\end{tabular}} 
\end{center}

\item[(e)] The automorphisms of the group $QA_{n}$ are given by 
\begin{center}$\psi_{u, a, v}(x)=y^ax^{u} \quad \text{and}\quad  \psi_{u, a, v}(y)=yx^{v},$\end{center} 
where $u$-odd, $a\in \{1,2\}$ and $v\in \{0,2^{n-2}\}$. The group ${\rm Aut}(QA_{n})$ has order $2^{n}$. In particular, the automorphism $\psi_{1, 1,0}$ permutes the groups $\langle x\rangle$ and $\langle yx \rangle$.
\end{itemize}

%%%%%%%%%%%%%%%%%%
\subsection{NEC groups}\label{S2}
Let us denote by $\mathcal{L} \cong {\rm PGL}_{2}({\mathbb R})$ (resp., $\mathcal{L}^{+}\cong {\rm PSL}_{2}({\mathbb R})$) the group of isometries (resp., orientation-preserving isometries) of the hyperbolic upper half-plane $\h$.
An NEC {\em group} is a discrete subgroup $\Delta$ of $\mathcal{L}$ such that the quotient space $\h/\Delta$ is a compact surface. An NEC group contained in $\mathcal{L}^{+}$ is called a {\em Fuchsian group}, and a {\em proper} NEC {\em group} otherwise. If $\Delta$ is a proper NEC group, then $\Delta^{+}=\Delta\cap \mathcal{L}^{+}$ is called its {\em canonical Fuchsian subgroup}. Note that $[\Delta:\Delta^{+}]=2$ and $\Delta^{+}$ is the unique subgroup of index 2 in $\Delta$ contained in $\mathcal{L}^{+}$.
In general, the algebraic structure of an NEC group $\Delta$ is described by the so-called {\em signature} $s(\Delta)$ \cite{Mac, Wil}:
\begin{equation}\label{E21} s(\Delta)=(h;\pm;[m_1,\cdots, m_r];\{(n_{11},\cdots, n_{1s_1}),\cdots, (n_{k1},\cdots, n_{ks_k})\}),\end{equation} 
where $h, r, k\geq 0$ and $m_i, n_{ij}>1$ (for all $i, j$) are integers. Here $h$ is the topological genus of the surface $\h/\Delta$ and $``+"$ means that $\h/\Delta$ is {\em orientable}, and $``-"$ means that $\h/\Delta$ is {\em non-orientable}. The number $k$ is the number of connected boundary components of $\h/\Delta$. The covering map $\pi:\mathbb{H}\to \mathbb{H}/\Delta$ has exactly $r$ branch values in its interior and $s_1+\cdots+s_k$ branch values on its border.  We call $m_i$ the {\em proper periods}, $n_{ij}$ the {\em periods}, and $(n_{i1},\cdots, n_{is_i})$ the {\em period-cycles} of $s(\Delta)$. We will denote by $[-]$, $(-)$ and $\{-\}$ the cases when $r=0$, $s_i=0$ and $k=0$, respectively. When there are no proper periods and there are no period-cycles in $s(\Delta)$ we say $\Delta$ is a {\em surface group}.
The signature provides a presentation of $\Delta$ \cite{Mac, Wil}:
\begin{eqnarray*}
\text{(elliptic generators)} && \beta_i\in \Delta^{+} \quad (i=1,\cdots, r); \\
\text{(reflections)} && c_{ij}\in \Delta\setminus \Delta^{+}\quad (i=1,\cdots, k;\ j=0,\cdots, s_i); \\
\text{(boundary generators)} && e_i\in \Delta^{+}\quad (i=1,\cdots, k); \\
 \text{(hyperbolic generators)} && a_i,b_i\in \Delta^{+}\quad (i=1,\cdots, h),\ \text{if $\h/\Delta$ is orientable}; \\
  \text{(glide reflections generators)} && d_i\in \Delta\setminus\Delta^{+}\quad (i=1,\cdots, h), \ \text{if $\h/\Delta$ is non-orientable};
  \end{eqnarray*}
  were these {\em generators} satisfy the {\em relations}:
   \begin{flushleft}$\beta_i^{m_i}=1\quad (i=1,\cdots, r);$\end{flushleft}
   \begin{flushleft}$e_ic_{i0}e_i^{-1}c_{is_i}=1\quad (i=1,\cdots, k);$\end{flushleft}
   \begin{flushleft}$c_{ij-1}^{2}=c_{ij}^2=(c_{ij-1}c_{ij})^{n_{ij}}=1\quad (i=1,\cdots, k;\ j=1,\cdots, s_i);$\end{flushleft}
   \begin{flushleft}$\prod_{i=1}^r\beta_i\prod_{i=1}^ke_i\prod_{i=1}^h[a_i,b_i]=1,\quad \text{if $\h/\Delta$ is orientable};$\end{flushleft}
   \begin{flushleft}$\prod_{i=1}^r \beta_i\prod_{i=1}^ke_i\prod_{i=1}^hd_i^2=1, \quad \text{if $\h/\Delta$ is non-orientable},$\end{flushleft} where $1$ denotes the identity map $id_{\h}$ in $\h$ and $[a_i,b_i]=a_ib_ia_i^{-1}b_i^{-1}$. 
   
The hyperbolic {\em area} of $\Delta$ with signature (\ref{E21})
is the {hyperbolic area} of any fundamental region for $\Delta$, and is given by \begin{equation}\label{E22} \mu (\Delta)=2\pi\Big(\eta h+k-2+\sum_{i=1}^r\big(1-\frac{1}{m_i}\big)+\frac{1}{2}\sum_{i=1}^k\sum_{j=1}^{s_i}\big(1-\frac{1}{n_{ij}}\big)\Big),\end{equation} with $\eta=2$ or 1 depending on whether or not $\h/\Delta$ is orientable. An NEC group with signature (\ref{E21}) actually exists if and only if the right-hand side of (\ref{E22}) is greater than 0.
The {\em reduced area} of an NEC group $\Delta$, denoted by $|\Delta|^*$, is given by $\mu(\Delta)/2\pi$. If $\Gamma$ is a subgroup of $\Delta$ of finite index, then the Riemann-Hurwitz formula holds $[\Delta:\Gamma]=\frac{\mu(\Gamma)}{\mu(\Delta)}.$

%%%%%%%%%%%%
\subsection{Topologically equivalent conformal/anticonformal actions}\label{Sec:signatures}
Let $S$ be a closed Riemann surface of genus $g \geq 2$. By the uniformization theorem, up to biholomorphisms, $S=\h/K$, where $K$ is a Fuchsian surface group.
We say that a finite group $G$  {\em acts} as a group of conformal (respectively, conformal/anticonformal) automorphisms of $S$ if it can be realizable as a subgroup of ${\rm Aut}^{+}(S)$ (respectively, ${\rm Aut}(S)$).
This is equivalent to the existence of a Fuchsian (respectively, an proper NEC) group  $\Delta$, containing $K$ as a normal subgroup, and of an epimorphism $\theta: \Delta \to G$ whose kernel is $K$; we say that $\theta$ provides a conformal (respectively, conformal/anticonformal) action of $G$ on $S$. 
Two conformal/anticonformal actions $\theta_1$, $\theta_2$ are {\em topologically equivalent} if there is an $\omega\in {\rm Aut}(G)$ and an $h\in {\rm Hom}^{+}(S)$ such that $\theta_2(g)=h\theta_1(\omega(g))h^{-1}$ for all $g\in G$.
This is equivalent to the existence of automorphisms $\phi \in {\rm Aut}(\Delta)$ and $\omega \in {\rm Aut}(G)$ such that
$\theta_2=\omega\circ \theta_1\circ \phi^{-1}$.

If $G$ acts conformally, then $S/G=\h/\Delta$ is a closed Riemann surface of genus $h$ and it has exactly $n$ cone points of respective cone orders $k_{1},\ldots, k_{n}$; the tuple $(h;k_{1},\ldots,k_{n})$ is also called the {\em signature} of $S/G$. If $h=0$ and $n=3$, then we talk of a {\it triangular} action.

 %%%%%%%%%%%%%%%%%%%%%%%%%
 %%%%%%%%%%%%%%%%%%%%%%%%% 
\section{Triangular conformal actions of $QA_{n}$}
In this section, we describe the triangular conformal actions of the group $QA_{n}$ (these are related to regular dessins d'enfants \cite{Gro}). 
 
\subsection{The strong and pure symmetric genus of $QA_{n}$}

\begin{theo}\label{pta}
Let $S$ be a closed Riemann surface such that ${\rm Aut}^{+}(S)=QA_{n}$, for $n \geq 4$, and $S/QA_{n}$ has triangular signature.
Then  
\setlength{\leftmargini}{8mm}
\begin{enumerate} 
\item[ (a)] $S/QA_{n}$  has signature $(0; 2,2^{n-1}, 2^{n-1})$, and $S$ is not hyperelliptic of genus $2^{n-2}-1$. 
\item[ (b)] The action of $QA_{n}$ is purely-non-free.
\item[ (c)] A (singular) model of $S$ is given by the affine algebraic curve $v^{2^{n-1}}=u^{2^{n-2}-2}(u-1)(u+1)^{2^{n-2}+1}$ with conformal automorphisms $x(u, v)=(u, \rho_{2^{n-1}}v)$ and $y(u, v)=(-u, v^{2^{n-2}+1}/u^{2^{n-3}-1}(u+1)^{2^{n-3}+1})$. 
\end{enumerate}
\end{theo}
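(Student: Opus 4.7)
The plan is, given $S$ with $\mathrm{Aut}^+(S)=QA_n$ and $S/QA_n$ triangular, to let $\Delta=\Delta(0;[l_1,l_2,l_3])$ together with an epimorphism $\theta:\Delta\to QA_n$ of torsion-free kernel $K=\pi_1(S)$, and to analyse the generating triple $(a,b,c)=(\theta(x_1),\theta(x_2),\theta(x_3))$ in $QA_n$, with $abc=1$ and $\langle a,b\rangle=QA_n$.

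For (a), property (a) forces each $l_i=2^{k_i}$. Working in the Frattini quotient $QA_n/\Phi(QA_n)\cong C_2\times C_2$, where $\Phi(QA_n)=\langle x^2\rangle$, and using that $y^jx^i$ has order $2^{n-1}$ exactly when $i$ is odd, the requirement $\langle\overline a,\overline b\rangle=C_2\times C_2$ combined with $\overline a\,\overline b\,\overline c=1$ forces at least two of $a,b,c$ to have order $2^{n-1}$. Hence the signature reads $(0;2^k,2^{n-1},2^{n-1})$ with $1\le k\le n-2$. To rule out $k\ge 2$ I invoke Singerman's classification of non-maximal Fuchsian triangle groups: $\Delta(2^{n-1},2^{n-1},2^k)$ is an index-two normal subgroup of the (Fuchsian, for $k\ge 2$) triangle group $\Delta(2,2^k,2^n)$, with outer involution swapping the two order-$2^{n-1}$ cone generators. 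Property (e) supplies the automorphism $\psi_{1,1,0}\in\mathrm{Aut}(QA_n)$ interchanging the two cyclic index-two subgroups $H_2=\langle x\rangle$ and $H_3=\langle yx\rangle$ (which must coincide with $\langle b\rangle$ and $\langle c\rangle$); this lifts the Singerman involution to an automorphism of $QA_n$ normalising $K$, forcing $\mathrm{Aut}^+(S)$ to strictly contain $QA_n$, contradicting the hypothesis. For $k=1$, $\Delta(2,2,2^n)$ has non-positive hyperbolic area, so $\Delta(2,2^{n-1},2^{n-1})$ is maximal; Riemann--Hurwitz then yields $g=2^{n-2}-1$. A hyperelliptic involution would be central in $QA_n$, hence equal to the unique involution $x^{2^{n-2}}$ of $Z(QA_n)=\langle x^2\rangle$; but then $QA_n/\langle x^{2^{n-2}}\rangle\cong C_{2^{n-2}}\times C_2$ would embed into $\mathrm{PGL}_2(\mathbb{C})$, which is impossible for $n\ge 4$ since this group is neither cyclic, dihedral, nor Platonic.

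For (b), the fixed-point set of a non-trivial $g\in QA_n$ is non-empty precisely when $g$ is conjugate to a non-trivial power of one of $a,b,c$. Since $\langle b\rangle=H_2$ and $\langle c\rangle=H_3$ are both normal in $QA_n$, their powers exhaust $H_2\cup H_3\setminus\{1\}$, and the conjugacy class of the non-central involution $a\in\{y,yx^{2^{n-2}}\}$ accounts for the remaining non-central involutions. A conjugacy bookkeeping using the class list of property (d) should exhibit every non-identity element of $QA_n$ as such a conjugate, giving the purely-non-free conclusion. For part (c), the tower $S\to S/\langle x\rangle\to S/QA_n$ presents $S$ as a cyclic $2^{n-1}$-fold Kummer cover of $\mathbb{P}^1_u=S/\langle x\rangle$, with $y$ descending to the involution $u\mapsto -u$. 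Placing the three branch values of $S/QA_n\to\mathbb{P}^1$ so that they lift to $\{0,\infty\}$ (the $a$-orbit swapped by $y$) and $\{1,-1\}$ (the two $y$-fixed preimages of the two order-$2^{n-1}$ cones), the Kummer presentation becomes $v^{2^{n-1}}=u^{e_0}(u-1)^{e_1}(u+1)^{e_{-1}}$; matching local ramification indices to the cone orders fixes $(e_0,e_1,e_{-1})=(2^{n-2}-2,\,1,\,2^{n-2}+1)$. The Galois action $x:(u,v)\mapsto(u,\rho_{2^{n-1}}v)$ is then transparent, and the formula for $y$ is pinned down by imposing $y^2=\mathrm{id}$ modulo the curve equation together with the commutator relation $[x,y]=x^{2^{n-2}}$.

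The key technical step is the Singerman extension in (a): producing the automorphism of $QA_n$ realising the outer involution of $\Delta(2^{n-1},2^{n-1},2^k)$ on the specific triple $(a,b,c)$ and checking it normalises $K$. Property (e) gives a natural candidate on the level of the two cyclic index-two subgroups, but its compatibility with the whole relation structure of $QA_n$ must be checked. Once this is in place, the conjugacy analysis for (b) and the Kummer-cover bookkeeping for (c) are essentially routine.
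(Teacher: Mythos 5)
Your Frattini-quotient step, pinning the signature down to $(0;2^{k},2^{n-1},2^{n-1})$ with $1\le k\le n-2$, is correct and is actually cleaner than the paper's argument (the paper instead passes to the orbifold $S/\langle x\rangle$ with the induced involution $\widehat y$ and eliminates the competing signatures by showing no suitable epimorphism onto $QA_n$ exists). The problem is the step that is supposed to finish (a). Singerman's index-two normal inclusion is $(s,s,t)\trianglelefteq(2,s,2t)$, so for the triple $(2^{n-1},2^{n-1},2^{k})$ the relevant overgroup is $(2,2^{n-1},2^{k+1})$, not your $(2,2^{k},2^{n})$. In particular your maximality claim at $k=1$ is false: $(2,2^{n-1},2^{n-1})$ is itself a non-maximal signature, sitting with index two inside the hyperbolic triangle group $(2,4,2^{n-1})$. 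Worse, the very mechanism you invoke for $k\ge 2$ is equally available at $k=1$: taking canonical generators of $\Delta(2^{n-1},2,2^{n-1})$ inside $\tilde\Delta=(2,2^{n-1},4)=\langle U,V,W\rangle$ as $V$, $W^{2}$, $UVU$ (product $1$), conjugation by $U$ swaps $V$ and $UVU$; normalizing $\theta(V)=x$ and $\theta(W^{2})=y$ gives $\theta(UVU)=yx^{-1}$, and the automorphism $\psi_{2^{n-1}-1,1,2^{n-2}}$ of property (e) interchanges $x$ and $yx^{-1}$ (it sends $x\mapsto yx^{-1}$ and $yx^{-1}\mapsto x^{(2^{n-2}+1)^{2}}=x$ mod $2^{n-1}$). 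So your lifting criterion is met at $k=1$ as well, and the argument as designed proves too much: it cannot single out $(2,2^{n-1},2^{n-1})$ as the surviving case, and no appeal to maximality can rescue it. Whatever argument you use here must be of a different nature; the paper's route is the admissibility analysis on $S/\langle x\rangle$, which never touches maximality.

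For (b), the ``routine conjugacy bookkeeping'' is precisely where the difficulty lies, and it does not close. The elements acting with fixed points are exactly the conjugates of powers of the three generator images; powers of $b$ and $c$ give $H_2\cup H_3=\{x^{i}\}\cup\{yx^{\mathrm{odd}}\}$, and the class of the involution $a$ is $\{y,yx^{2^{n-2}}\}$. The elements $yx^{2j}$ with $2j\not\equiv 0,2^{n-2}\pmod{2^{n-1}}$ (there are $2^{n-2}-2\ge 2$ of them, each of order at least $4$) lie in none of these sets, so the claimed exhaustion of $QA_n$ fails and your sketch breaks exactly at the point you defer. (Note that the paper's own proof of (b) argues differently, via fixed-point counts for $x$, $x^{2^{n-2}}$, $yx^{2^{n-3}}$ and the class of $y$, so you cannot simply cite it to fill this hole.) Your part (c) is essentially the paper's argument: both present $S$ as a cyclic $2^{n-1}$-gonal cover of $S/\langle x\rangle\cong\widehat{\mathbb{C}}$ with $y$ descending to $u\mapsto -u$ and determine the exponents from the local branching data; that portion is fine in outline, modulo actually verifying the displayed formula for the lift of $y$.
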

  
\begin{proof}
We consider the quasi-abelian group $QA_{n}$ with presentation as in \eqref{g1}. Assume that $QA_{n}$ acts in a triangular way on the closed Riemann surface $S$.
\medskip

\noindent ({\bf a})  Let $\pi: S \rightarrow \mathcal{O}= S/\langle x \rangle$ be a branched regular cover map with deck group $\langle x \rangle$. As $\langle x \rangle$ is a normal subgroup of $QA_{n}$ and $y^{2}=1\in \langle x \rangle$, the automorphism $y$ induces a conformal involution $\widehat{y}$ of the orbifold $\mathcal{O}$, so it permutes the branch values of $\pi$ (i.e., the cone points of $\mathcal{O}$) and $S/QA_{n} = \mathcal{O}/ \langle \widehat{y} \rangle$. The triangular property of the action of $QA_{n}$ on $S$ (together with the Riemann-Hurwitz formula) ensures that $\mathcal{O}$ has genus zero and that its set of cone points are given by a pair of points $p_{1}, p_{2}$ (which are permuted, but not fixed, by $\widehat{y}$) and one or both of the fixed points of $\widehat{y}$.
By the uniformization theorem, we may identify $\mathcal{O}$ with the Riemann sphere $\widehat{\mathbb{C}}$. Up to post-composition of $\pi$ with a suitable M\"obius transformation, we may also assume $\widehat{y}(z) = -z,$ $p_{1}= -1$ and  $p_{2}= 1$. As the finite groups of M\"obius transformations are either cyclic, dihedral, ${\rm A_{4}}$, ${\rm A_{5}}$ or ${\rm S_{4}}$, and as $QA_{n}$ is not isomorphic to any of them, the surface $S$ cannot be of genus zero. This (together with the Riemann-Hurwitz formula) asserts that $\pm 1$ are not the only cone points of $\mathcal{O}$, at least one of the two fixed points of $\widehat{y}$ must also be a cone point (we can suppose without loss of generality that $0$ is another cone point of $\mathcal{O}$). For the point $\infty$  we have the following.
\medskip

\noindent{\bf Case $1$} ($\infty$ is a cone point of $\mathcal{O}$). The cone points of $\mathcal{O}$ are $\pm 1$, $0$ and $\infty$. As $0$ and $\infty$ are fixed points of $\widehat{y}$, each one them has points in its preimage on $S$ with $QA_{n}$-stabilizer generated by an element of the form $yx^{k}$ with $k\not=0, 2^{n-2}$ (for otherwise a point in the preimage of $0$ or $\infty$ induces a conic point of order 2 on $S/QA_{n}$ which contradicts the fact that $0$ and $\infty$ are fixed points of $\widehat{y}$ and that them are cone points of $\mathcal{O}$). Thus, the points in the preimage of 0 and $\infty$ induce conic points of order $4r$ on $S/QA_{n}$ with $r| 2^{n-3}$, and that $0$ and $\infty$ are conic points of order $2r$ in $\mathcal{O}$. 
  
 As $x^{2l}\in \langle yx^k\rangle$ with $k\not=0, 2^{n-2}$ and $l\in \{1, \cdots, 2^{n-2}-1\}$, and this does not generate $\langle x\rangle$, the $\langle x\rangle$-stabilizer of any point on $-1$ (and also about $1= \widehat{y}(-1)$) must be stabilized by a non-trivial power $x^{m}$ (we assume $m$ is a divisor of $2^{n-1}$) such that $\langle x^{m}, x^{2l} \rangle =\langle x \rangle$. From where $m$ must be prime relative to $2l$, so we can suppose without loss of generality that $m=1$. Thereby, $\mathcal{O}$ has signature $(0;2r_1,2r_2, 2^{n-1}, 2^{n-1})$ and $S/QA_{n}$ has signature $(0;4r_1, 4r_2, 2^{n-1}),$ with $r_j|2^{n-3}$. The signature $(0;4r_1, 4r_2, 2^{n-1})$ is not admissible for the action of $QA_{n}$ on $S$ (i.e., there is no an epimorphism from a Fuchsian group with signature $(0;+;[4r_1,4r_2,2^{n-1}];\{-\})$ on  $QA_{n}$).
\medskip
  
\noindent{\bf Case $2$} ($\infty$ is not a cone point of $\mathcal{O}$). In this case, the $QA_{n}$-stabilizer of any point on $\infty$ is generated by an element of the form $y$ or  $yx^{2^{n-2}}$ (i.e., does not contain a non-trivial power of $x$). So, $\infty$ induces a conic point of order 2 on $S/QA_{n}.$ Thereby, the cone points of $\mathcal{O}$ are $\pm 1$ and $0$. As $0$ is fixed point of  $\widehat{y}$, by similar arguments as in case 1, this has points in its preimage on $S$ with $QA_{n}$-stabilizer generated by an element of the form $yx^{k}$ with $k\not=0, 2^{n-2}$. Thus, the points in the preimage of 0 induce conic points of order $4r$ on $S/QA_{n}$ with $r| 2^{n-3}$, and that $0$ is a conic point of order $2r$ in $\mathcal{O}$. 

As $x^{2l}\in \langle yx^k\rangle$ with $k\not=0, 2^{n-2}$ and $l\in \{1, \cdots, 2^{n-2}-1\}$, and this does not generate $\langle x\rangle$. A similar argument as in Case 1 is used to obtain that $\mathcal{O}$ has signature $(0;2r, 2^{n-1}, 2^{n-1})$ and $S/QA_{n}$ has signature $(0;2,4r,2^{n-1}),$ with $r|2^{n-3}$. When $r\not=2^{n-3}$, the points in the preimage of 0 has  a $QA_n$-stabilizer generated by an element of the form $yx^k$, with $k\not=0, 2^{n-2}$ even, in this case the signature $(0;2, 4r, 2^{n-1})$ is not admissible for the action of $QA_{n}$ on $S$. In case $r=2^{n-3}$, the signature $(0;2, 2^{n-1}, 2^{n-1})$ is admissible for the action of $QA_{n}$ on $S$. 

 Finally, for the second part of the statement, we obtain (by the Riemann-Hurwitz formula) that $S$ has genus $2^{n-2}-1$ and we observe that $x^{2^{n-2}}$ is the unique involution in the center of $QA_{n}$. Since the subgroup $\langle x\rangle$ acts on $S$ with signature of the form $(0;2^{n-2}, 2^{n-1}, 2^{n-1})$, by \cite[Theorem 2.7]{ABCNPW} for the subgroup $\langle x^{2^{n-2}}\rangle$ of  $\langle x\rangle$ we have that the quotient surface $S/\langle x^{2^{n-2}}\rangle$ has signature of the form $(2^{n-3}-1;+;[2,2,2, 2];\{-\})$, so this not correspond to the signature of a hyperelliptic curve of genus $2^{n-2}-1$.
  
 \medskip  
 \noindent ({\bf b}) Let $\pi:S\to \widehat{\mathbb{C}}$ be a branched regular cover map with deck group $\langle x\rangle$ as in (a). Then can be seen that $x$ and $x^{2^{n-2}}$ has exactly 2 and 4 fixed points, respectively. As $(yx^{2^{n-3}})^{2}=x^{2^{n-2}}$, the automorphism $yx^{2^{n-3}}$ induce an involution $\widehat{yx^{2^{n-3}}}$ of the orbifold $\mathcal{O}$ with exactly two fixed points. On the other hand, since the point $0$ is the projection of the fixed points of $(yx^{2^{n-3}})^2$ (proof of the item (a)), and the fact that the number of elements in the conjugacy class of $yx^{2^{n-3}}$ is $2$, we have that $yx^{2^{n-3}}$ have exactly two fixed points. Finally, as $y$ and $yx^{2^{n-2}}$ are in the same conjugacy class (Section \ref{Section 1}), from the proof of the item (a) (i.e., the point $\infty$ have points in its preimage with $QA_{n}$-stabilizer generated by elements of the form $y$ or $yx^{2^{n-2}}$), we conclude that every element of $QA_{n}$ acts with fixed points.

\medskip  
 \noindent ({\bf c}) By item (a) the quotient orbifold $S/QA_{n}$ has signature $(0;2, 2^{n-1}, 2^{n-1})$, and $\pi: S \rightarrow \widehat{\mathbb{C}}$ is a cyclic branched regular covering, branched at the points $\pm 1$ (with branching order $2^{n-1}$) and at the point $0$ (with branching order $2^{n-2}$). Then by \cite{BrW}, an equation for $S$ must be of the form $w^{2^{n-1}}=t^{\alpha}(t-1)^{\beta}(t+1)^{\gamma},$ where $\alpha, \beta, \gamma \in \lbrace 1, \ldots ,2^{n-1}-1 \rbrace$ are such that: (i) $\beta$ and $\gamma$ are both relative primes to $2^{n-1}$, (ii) ${\rm gcd}(\alpha,2^{n-1})=2$, (iii) $\alpha+\beta + \gamma \equiv 0 \, {\rm mod} \, (2^{n-1})$. In this model, $\pi$ corresponds to the projection $(t, w) \mapsto t$. By the condition (ii)  and (iii) we may suppose, without loss of generality, that $\alpha=2^{n-2}-2$ and $\beta=1$; so $\gamma= 2^{n-2}+1$. In this way, we have obtained the uniqueness, except for isomorphisms of $S$.  \end{proof} 
 %%%%%%%%%%%%%%

\begin{remark}[\small{Dessins d'enfants}]\label{re1c} The bipartite graph, associated to the regular dessin d'enfant on $S$ induced by the triangular action of $QA_{n}$, is the graph $K_{2, 2}^{2^{n-2}}$, which is obtained from the complete bipartite graph (see, \cite[pp. 17]{D}) $K_{2, 2}$ in which each of its edges is replaced by $2^{n-2}$ edges. In order to observe this, we consider a regular branch cover $\beta:S\to \widehat{\mathbb{C}}$, whose deck group is $QA_{n}$. We may assume that $\beta=Q\circ P$, where $P:S\to \widehat{\mathbb{C}}$ has deck group $C_{2^{n-1}}\cong\langle x\rangle$, and $Q(x)=x^2$, whose deck group is $QA_{n}/\langle x\rangle\cong C_2$.

 Now we described the corresponding monodromy group. Let us consider the following permutations of the symmetric group $\mathfrak{S}_{2^n}$:
$$\eta=(1,2,\ldots,2^{n-1})(2^{n-1}+1, 2^{n-1}+2,\ldots, 2^n),$$
$$\sigma=\prod_{1=k=2l+t}^{2^{n-2}}(k,2^{n-1}+(2^{n-2}(1-t)+k))\prod_{2^{n-1}+1=k}^{2^{n-1}}(k,2^{n-1}+(k-2^{n-2}(1-t))).$$
Then, $\eta^{2^{n-1}}=\sigma^{2}=1$,  $\sigma \eta \sigma=\eta^{2^{n-2}+1}$, and  $\langle \eta, \sigma \rangle \cong QA_{n},$ where the isomorphism is the one taking $\eta$ to $x$ and $\sigma$ to $y$.  If $\tau=\sigma \eta^{2^{n-1}-1}$, then $\sigma \tau \eta =1$, and the pair $(\eta, \tau)$ determines the monodromy group associated to the regular dessin d'enfant of signature $(0;2, 2^{n-1}, 2^{n-1})$ as described in the Theorem \ref{pta}. This permits to see that the associated bipartite graph of this dessin d'enfant is $K_{2,2}^{2^{n-2}}$. For $n=4$, we have the following dessin d'enfant (Figura \ref{f1}).
 \end{remark}   
  \begin{figure}[h!]
 \begin{center}
\includegraphics[width=3cm]{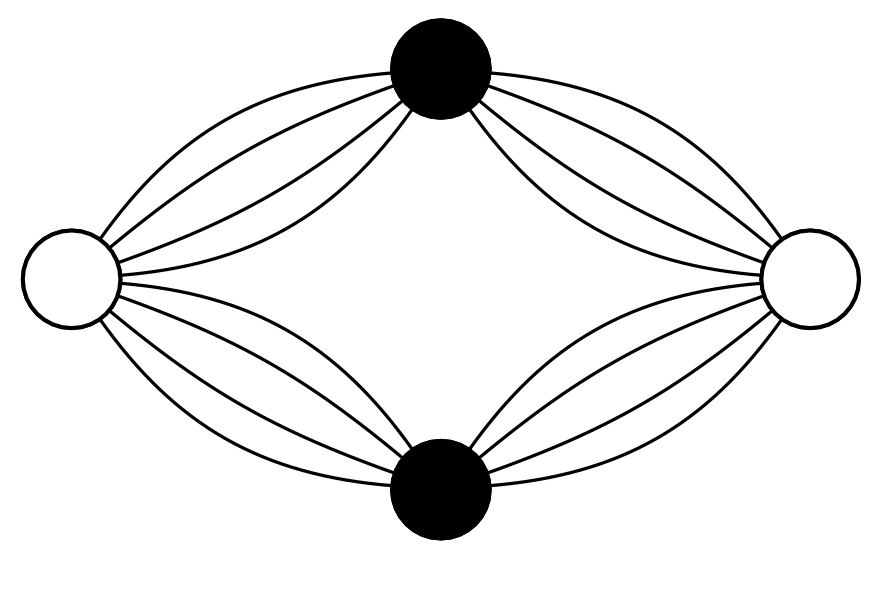} 
\caption{The graph $K_{2,2}^4$}
\label{f1}	
\end{center} 
\end{figure}
 In \cite{MZ7}, was previously proved that $\sigma^{0}(QA_{n})$ is equal to $2^{n-2}-1$.
 
\begin{coro}[Strong and pure symmetric genus of $QA_{n}$]\label{strong} 
If $n\geq 4$, then $\sigma_p(QA_{n})=2^{n-2}-1=\sigma^{0}(QA_{n})$ and, up homeomorphisms, the conformal action of $QA_{n}$ is unique.  

\end{coro}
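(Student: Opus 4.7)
The equality $\sigma_p(QA_n) = \sigma^0(QA_n) = 2^{n-2}-1$ is immediate: $\sigma^0(QA_n) = 2^{n-2}-1$ comes from \cite{MZ7}, and Theorem \ref{pta} realizes this value by an action which, by part (b), is purely-non-free, giving $\sigma_p(QA_n) \leq 2^{n-2}-1$; combined with the trivial bound $\sigma^0 \leq \sigma_p$ this forces equality.

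For topological rigidity, I would first classify the signatures of any conformal action of $QA_n$ on a Riemann surface of genus $g = 2^{n-2}-1$. Riemann--Hurwitz forces the orbit orbifold to have reduced area $1/2 - 2^{2-n}$; since every period divides $2^{n-1}$, enumeration of power-of-two solutions yields only three arithmetic candidates: $(0;2, 2^{n-1}, 2^{n-1})$, $(0;4, 4, 2^{n-2})$, and $(0;2,2,2,2^{n-2})$. The last two are eliminated using the structure of $QA_n$ from Section \ref{Section 1}: the three involutions $x^{2^{n-2}}, y, yx^{2^{n-2}}$ together with the identity form a Klein four-subgroup, so the product of any three involutions is again an involution, incompatible with the period $2^{n-2} \geq 4$ of $(0;2,2,2,2^{n-2})$; and for $(0;4,4,2^{n-2})$, a direct tabulation of the four order-$4$ elements of $QA_n$, combined with the identity $(2^{n-2}+1)^2 \equiv 1 \pmod{2^{n-1}}$, shows that every product of two of them has order at most $2$, ruling out the required third factor of order $2^{n-2}$. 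Hence only $(0;2,2^{n-1},2^{n-1})$ survives.

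With the signature fixed, topological equivalence classes correspond to generating triples $(g_1,g_2,g_3)$ in $QA_n$ with $|g_1|=2$, $|g_2|=|g_3|=2^{n-1}$, $g_1 g_2 g_3 = 1$, up to ${\rm Aut}(QA_n)$ and the transposition of $g_2$ and $g_3$ (swap of the two branch points of equal order). Using $yx^k = x^{(2^{n-2}+1)k}y$, a short check shows $g_1 = x^{2^{n-2}}$ is impossible: the corresponding $g_2, g_3$ are forced into one of the cyclic index-two subgroups $\langle x\rangle$ or $\langle yx\rangle$. The remaining choices $g_1 \in \{y, yx^{2^{n-2}}\}$ are exchanged by an automorphism $\psi_{u,a,v}$ with $v=2^{n-2}$, so I would fix $g_1=y$; the surviving triples then split into two families indexed by whether $g_2 \in \langle x\rangle$ or $g_2 \in \langle yx\rangle$, the automorphism $\psi_{1,1,0}$ interchanges these families, and the automorphisms $\psi_{u,2,0}$ with $u$ odd act transitively inside each family on the parameter $s$ appearing in $g_2 = x^s$ (respectively $g_2 = yx^s$). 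This produces a single ${\rm Aut}(QA_n)$-orbit, hence topological uniqueness. The hard part will be the elimination of $(0;4,4,2^{n-2})$, which requires the complete enumeration of pairwise products of the order-$4$ elements to verify the order bound; the remainder of the argument is essentially bookkeeping with the explicit automorphism formulas recalled in Section \ref{Section 1}.
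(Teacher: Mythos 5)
Your proposal is correct and follows essentially the same route as the paper: the genus equalities come from Theorem \ref{pta} together with \cite{MZ7}, and topological rigidity is obtained by classifying the generating triples (equivalently, the epimorphisms $\theta_{k,r}$ from the $(2,2^{n-1},2^{n-1})$ triangle group) up to post-composition with the automorphisms $\psi_{u,a,v}$, exactly as in the paper's proof of Corollary \ref{strong}. The only difference is that you eliminate the alternative genus-$(2^{n-2}-1)$ signatures $(0;4,4,2^{n-2})$ and $(0;2,2,2,2^{n-2})$ by a direct (and correct) arithmetic/group-theoretic check, whereas the paper disposes of this step by appealing to Theorem \ref{pta} and the minimal-area results of \cite{May10,MZ7}.
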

\begin{proof}
 The group $QA_{n}$ acts purely-non-free on closed Riemann surfaces $S$ of genus $2^{n-2}-1$ with signature $(0;2,2^{n-1}, 2^{n-1})$ (Theorem \ref{pta}). As this produces the smallest possible  
hyperbolic area \cite{May10} for a conformal action of $QA_{n}$, we obtain that $\sigma_p(QA_{n})=2^{n-2}-1$.

Let us show that the conformal action of $QA_n$ is unique. Let $\Gamma$ be a Fuchsian group with signature $(0;+; [2, 2^{n-1}, 2^{n-1}];\{-\})$ and presentation 
$\Gamma=\langle \beta_1,\beta_2,\beta_3:\ \beta_1\beta_2\beta_3=\beta_1^2=\beta_2^{2^{n-1}}=\beta_3^{2^{n-1}}=1\rangle$. 
Every possible epimorphism from $\Gamma$ to $QA_{n}$, with torsion-free kernel,  is of the form
$$\theta_{k,r}:\Gamma\to QA_{n}: 
\theta_{k,r}(\beta_1)=yx^{k},\ \theta_{k,r}(\beta_2)=yx^{2^{n-2}k-(k+r)},\ \theta_{k,r}(\beta_3)=x^{r},$$
where 
$k\in\{0,2^{n-2}\}$, $r$-odd.
By post-composing $\theta_{k,r}$ by automorphisms $\psi_{u,2,0}$ and $\psi_{1,2,v}$ of $QA_{n}$,
such that $ur\equiv 1 \, {\rm mod} \, (2^{n-1})$ and $ku+v\equiv 0 \, {\rm mod} \, (2^{n-1})$, we obtain $\theta_{0,1}$. So, this action is topologically unique. 
\end{proof}

%%%%%%%%%%%%%%%%%%%%%%%%%%%%%%%%%%
\subsection{Jacobian variety for the triangular action of $QA_{n}$} 
A conformal action of a finite group $G$ on a Riemann surface $S$ of genus $g \geq  2$ induces a natural ${\mathbb Q}$-algebra homomorphism $\rho : {\mathbb Q}[G]\to {\rm End}_{\mathbb Q}(J_S)$, from the group algebra ${\mathbb Q}[G]$ into the endomorphism algebra of the Jacobian variety $J_S$. The factorization of ${\mathbb Q}[G]$ into a product of simple algebras yields a decomposition of $J_S$ into abelian subvarieties, called the {\it isotypical decomposition} \cite{LR}.
 We proceed to describe the decomposition of the Jacobian variety for the triangular action of the group $QA_{n}$ on closed Riemann surfaces.
       
\begin{theo}\label{VJ}
Let $S$ be a closed Riemann surface of genus $2^{n-2}-1$ admitting the conformal action of the group $QA_{n}$, presented as in (\ref{g1}), with signature $(0; 2,2^{n-1}, 2^{n-1})$. Then $$J_S\sim J_{S/\langle x^{2^{n-2}}\rangle}\times J_{S/\langle y\rangle}^2.$$ Moreover, $J_S$ has complex multiplication.
\end{theo}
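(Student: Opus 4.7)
The proof has two parts: the isogeny decomposition and the complex multiplication assertion.

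\textbf{Isogeny decomposition.} My plan is to apply the Lange–Recillas framework \cite{LR} to the conformal $QA_n$-action on $S$: this yields an isotypical decomposition of $J_S$ indexed by the rational irreducible characters of $G=QA_n$, and for any subgroup $H\leq G$ the Jacobian $J_{S/H}$ is the sum of the components with multiplicity $\dim V^H$. The concrete steps are (i) compute the character $\chi_S$ of $G$ on $H^0(S,\Omega_S)$ via the Chevalley–Weil formula applied to the triangular epimorphism $\theta_{0,1}$ of Corollary \ref{strong}; (ii) decompose $\chi_S$ into the rational irreducibles of $QA_n$; (iii) verify that the isotypical decomposition of $J_S$ matches $J_{S/\langle x^{2^{n-2}}\rangle} \times J_{S/\langle y\rangle}^2$. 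A first consistency check is
$$\dim J_S \;=\; 2^{n-2}-1 \;=\; (2^{n-3}-1)+2\cdot 2^{n-4} \;=\; \dim J_{S/\langle x^{2^{n-2}}\rangle} + 2\,\dim J_{S/\langle y\rangle},$$
obtained via Riemann–Hurwitz using $|\mathrm{Fix}(x^{2^{n-2}})|=4$ (Theorem \ref{pta}(b)) and $|\mathrm{Fix}(y)|=2^{n-2}$ (half of the $2^{n-1}$ points in the $G$-fiber over the order-$2$ cone point of $S/G$ have stabilizer $\langle y\rangle$, the other half $\langle yx^{2^{n-2}}\rangle$). The factor of $2$ in $J_{S/\langle y\rangle}^2$ reflects the two conjugate subgroups $\langle y\rangle$ and $\langle yx^{2^{n-2}}\rangle=x\langle y\rangle x^{-1}$, which yield two isomorphic quotients of $S$ contributing independently to $J_S$ via pullback.

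\textbf{Complex multiplication.} The decisive observation is that both $S/\langle y\rangle$ and $S/\langle x^{2^{n-2}}\rangle$ are cyclic covers of $\mathbb{P}^1$ branched over exactly three points, hence quotients of a Fermat curve. For $S/\langle x^{2^{n-2}}\rangle$: from Theorem \ref{pta}(a) the intermediate orbifold $S/\langle x\rangle \cong \widehat{\mathbb{C}}$ has cone points only at $\{-1,0,1\}$, and $S/\langle x^{2^{n-2}}\rangle \to S/\langle x\rangle$ is a $C_{2^{n-2}}$-cyclic cover branched over these three points with ramification $(2^{n-2}, 2^{n-3}, 2^{n-2})$ (with $\infty$ unramified). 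For $S/\langle y\rangle$: the normal subgroup $H_1=\langle x^2,y\rangle$ satisfies $S/H_1\cong\mathbb{P}^1$, since the double cover $S/H_1 \to S/G$ is branched only at the two order-$2^{n-1}$ cone points (as $\theta_{0,1}(\beta_1)=y$ lies in $H_1$ while $\theta_{0,1}(\beta_2)=yx^{-1}$ and $\theta_{0,1}(\beta_3)=x$ do not); the induced map $S/\langle y\rangle \to S/H_1$ is then a $C_{2^{n-2}}$-cyclic cover branched over three points with ramification $(2^{n-2},2^{n-2},2)$. Since any cyclic cover of $\mathbb{P}^1$ branched at three points is a quotient of the Fermat curve of degree $2^{n-2}$, whose Jacobian has complex multiplication by Weil's classical theorem, both $J_{S/\langle x^{2^{n-2}}\rangle}$ and $J_{S/\langle y\rangle}$ have CM, and consequently so does $J_S$.

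\textbf{Main obstacle.} The principal technical difficulty lies in the character decomposition of step (ii): $QA_n$ has about $5\cdot 2^{n-3}$ complex irreducibles, including $2^{n-3}$ two-dimensional ones induced from non-$y$-invariant characters of $\langle x\rangle$. Tracking the multiplicities $\langle\chi_S,\chi_V\rangle$ across all $V$ and checking that the two components $J_{S/\langle x^{2^{n-2}}\rangle}$ and $J_{S/\langle y\rangle}^2$ exhaust the isotypical decomposition is laborious but routine. An appealing geometric alternative is to construct the isogeny $J_{S/\langle x^{2^{n-2}}\rangle}\times J_{S/\langle y\rangle}^2 \to J_S$ directly via pullbacks along the three quotient maps $S \to S/\langle x^{2^{n-2}}\rangle$, $S \to S/\langle y\rangle$ and $S \to S/\langle yx^{2^{n-2}}\rangle$; surjectivity follows from the dimension count, reducing the argument to verifying that the three resulting abelian subvarieties of $J_S$ intersect only in finite subgroups, which in turn is read off from the isotypical decomposition.
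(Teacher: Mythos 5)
Your proposal takes a genuinely different route from the paper for the isogeny, but as written the decomposition half is not yet a proof. The paper never touches the isotypical decomposition of $J_S$ under all of $QA_n$: it restricts to the Klein four-subgroup $H=\langle x^{2^{n-2}}, y\rangle\cong C_2^2$ and applies the Kani--Rosen theorem \cite{KR} with the partition $H=\langle x^{2^{n-2}}\rangle\cup\langle y\rangle\cup\langle yx^{2^{n-2}}\rangle$, giving $J_S\times J_{S/H}^2\sim J_{S/\langle x^{2^{n-2}}\rangle}\times J_{S/\langle y\rangle}\times J_{S/\langle yx^{2^{n-2}}\rangle}$; since $S/H$ has genus $0$ (immediate from your own genus counts $2^{n-3}-1$, $2^{n-4}$, $2^{n-4}$, or from Riemann--Hurwitz) and $\langle yx^{2^{n-2}}\rangle=x\langle y\rangle x^{-1}$, the stated isogeny drops out with no character theory at all. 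In your plan, by contrast, the decisive step --- computing $\chi_S$ via Chevalley--Weil and matching the multiplicity of every rational irreducible of $QA_n$ against $J_{S/\langle x^{2^{n-2}}\rangle}\times J_{S/\langle y\rangle}^2$ --- is exactly what you defer as ``laborious but routine''; the dimension count you do perform cannot substitute for it (abelian subvarieties of complementary dimension need not generate $J_S$ up to isogeny), and your alternative pullback construction refers the finiteness of the pairwise intersections back to the same unperformed isotypical computation. So there is a genuine gap in execution, even though the Lange--Recillas route \cite{LR} would in principle succeed; the missing idea relative to the paper is simply to pass to the subgroup $C_2^2$ and invoke \cite{KR}.

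The rest of your analysis is correct and partly better than the paper's. Your ramification data for the intermediate quotients check out: $S/\langle x^{2^{n-2}}\rangle\to S/\langle x\rangle\cong\widehat{\mathbb{C}}$ is cyclic of degree $2^{n-2}$ branched over $-1,0,1$ with indices $(2^{n-2},2^{n-3},2^{n-2})$, $S/H_1\cong\widehat{\mathbb{C}}$, and $S/\langle y\rangle\to S/H_1$ is cyclic of degree $2^{n-2}$ with indices $(2^{n-2},2^{n-2},2)$; consequently both quotient Jacobians are isogeny factors of Fermat Jacobians and have CM, which is a more self-contained argument than the paper's appeal to \cite{BRT} and \cite[Cor. 3.11]{FGP}. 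Note, however, that your CM conclusion for $J_S$ inherits the dependence on the unproved decomposition, and this detour is unnecessary: by Theorem \ref{pta}(a),(c), $S$ itself is a cyclic cover of $\widehat{\mathbb{C}}$ branched over the three points $0,\pm 1$, hence a quotient of the Fermat curve of degree $2^{n-1}$, so $J_S$ has complex multiplication directly.
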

\begin{proof} As consequence of Kani-Rosen's Theorem \cite{KR}, applied on the subgroup $H=\langle x^{2^{n-2}}, y\rangle\cong C_{2}^2$ of $QA_{n}$ with partition $H=\langle x^{2^{n-2}}\rangle \cup \langle y \rangle \cup \langle yx^{2^{n-2}} \rangle,$ and by similar arguments to the proof of the Theorem 3.3 in \cite{HMQ22}, the desired is obtained. Finally, we conclude by an algorithm in \cite{BRT} and \cite[Cor. 3.11]{FGP} that the Jacobian variety $J_S$ has complex multiplication.  \end{proof}

\begin{remark} In the proof of the second part of the item (a) of Theorem \ref{pta}, we observe that the quotient surface $S/\langle x^{2^{n-2}}\rangle$ is of genus $2^{n-3}-1$, and  since $S$ has genus $2^{n-2}-1$ we conclude that the quotient surface $S/ \langle y \rangle$ has genus $2^{n-4}$. 
\end{remark}

%%%%%%%%%%%%%%%%%%%%%%%%%%%
%%%%%%%%%%%%%%%%%%%%%%%%%%%

\section{The symmetric hyperbolic genus of $QA_{n}$}
The group quasi-abelian $QA_{n}$, presented as in (\ref{g1}), contains three index two subgroups $QA_n^+\in \{H_1\cong C_{2^{n-2}}\times C_2 , H_2\cong C_{2^{n-1}}, H_3\cong C_{2^{n-1}}\}$ (Section \ref{Section 1}). Then there are closed Riemann surfaces $S$ of genus $g\geq 2$, for which $QA_{n} \leq {\rm Aut}(S)$ and  $QA_n^+\!=\!QA_{n}\! \cap\! {\rm Aut}^{+}(S)$\! \cite{Gre}, we denote by  $\sigma^{hyp}(QA_{n},QA_n^+) \geq 2$ the smallest genus of these surfaces.

\begin{theo}\label{hyp1}
If $n \geq 4$, then 
\begin{enumerate} 
 \item $\sigma^{hyp}(QA_{n}, H_1)=2^{n-2}-1$.
 \item $\sigma^{hyp}(QA_{n}, H_i)=2^{n-3}$ with $i=2,3$.
 \end{enumerate}
\end{theo}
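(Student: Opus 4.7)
The strategy follows the template used for $G_n$ in \cite{HMQ22}: existence of a Riemann surface $S$ with $QA_n \leq {\rm Aut}(S)$ and $H_j = QA_n \cap {\rm Aut}^+(S)$ is equivalent to a smooth surjection $\theta : \Delta \to QA_n$ from a proper NEC group $\Delta$ with torsion-free kernel and $\theta(\Delta^+) = H_j$; minimal genus means minimal reduced area via the Riemann-Hurwitz identity $2g-2 = 2^n |\Delta|^*$. The key algebraic reduction is that the three involutions of $QA_n$, namely $x^{2^{n-2}}$, $y$ and $yx^{2^{n-2}}$, all lie in $H_1$. Hence in case (1) the set $QA_n \setminus H_1$ contains no involutions, so $\Delta$ cannot have any reflection generator; this forces $k=0$, and $\Delta$ must be non-orientable of signature $(h;-;[m_1,\ldots,m_r];\{-\})$ with glide-reflection generators $d_i$. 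In case (2) (including $i=3$), the only involutions outside $H_i$ are $y$ and $yx^{2^{n-2}}$, whose product $x^{2^{n-2}}$ has order $2$; consequently every period-cycle entry $n_{ij}$ of $\Delta$ must equal $2$.

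For the upper bound in (1), take $\Delta_1$ with signature $(1;-;[2,2^{n-2}];\{-\})$ and reduced area $\tfrac{1}{2} - \tfrac{1}{2^{n-2}}$, with presentation $\Delta_1 = \langle \beta_1,\beta_2,d_1 \mid \beta_1^2 = \beta_2^{2^{n-2}} = 1,\ \beta_1\beta_2 d_1^{\,2} = 1 \rangle$, and define $\theta_1(\beta_1) = y$, $\theta_1(\beta_2) = yx^2$, $\theta_1(d_1) = x^{2^{n-2}-1}$. One verifies that $y \cdot yx^2 = x^2$ cancels $(x^{2^{n-2}-1})^2 = x^{-2}$, that $\theta_1$ is surjective because $x^{2^{n-2}-1}$ generates $\langle x\rangle$, that the kernel is torsion-free since each $\theta_1(\beta_i)$ has the prescribed order, and that $\theta_1(\Delta_1^+) = \langle x^2, y\rangle = H_1$; Riemann-Hurwitz then yields $g = 2^{n-2}-1$. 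For the upper bound in (2), take $\Delta_2$ of signature $(0;+;[2^{n-1}];\{(2)\})$ of reduced area $\tfrac{1}{4} - \tfrac{1}{2^{n-1}}$, with elliptic $\beta_1$, reflections $c_0,c_1$, boundary $e_1 = \beta_1^{-1}$ and relation $e_1 c_0 e_1^{-1} c_1 = 1$; set $\theta_2(\beta_1) = x^{-1}$, $\theta_2(c_0) = y$, $\theta_2(c_1) = yx^{2^{n-2}}$. The conjugation identity $xyx^{-1} = yx^{2^{n-2}}$, deduced from $yx = x^{2^{n-2}+1}y$, implements the NEC relation, and $g = 2^{n-3}$. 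For $H_3$, one composes with the automorphism $\psi_{1,1,0}$ of $QA_n$, which interchanges $\langle x\rangle$ and $\langle yx\rangle$ (Subsection \ref{Section 1}(e)).

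For the lower bound, the elliptic periods of $\Delta$ must divide the exponent of $H_j$, namely $2^{n-2}$ in case (1) and $2^{n-1}$ in case (2). In case (1), after excluding signatures of non-positive area, the only signatures with reduced area strictly less than $\tfrac{1}{2} - \tfrac{1}{2^{n-2}}$ are those in the family $(1;-;[2,2^k];\{-\})$ for $2 \leq k < n-2$. Since every square in $QA_n$ lies in $\langle x\rangle$, the main relation forces $\theta(\beta_1)\theta(\beta_2) \in \langle x\rangle$; running through the three possibilities for the involution $\theta(\beta_1)$ and reducing the equation $\theta(\beta_1)\theta(\beta_2)\theta(d_1)^{\,2} = 1$ modulo $2^{n-1}$ produces in every case a congruence whose left-hand side is odd while the right-hand side is even, ruling out any smooth surjection. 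In case (2), the all-periods-equal-$2$ condition combined with the conjugation formula $x^j y x^{-j} = yx^{-j\cdot 2^{n-2}}$ forces the exponent $j$ of $\theta(e_1) \in \langle x\rangle$ to be odd (so as to swap $y$ with $yx^{2^{n-2}}$); this odd-exponent condition then forces the single elliptic period to be exactly $2^{n-1}$, since smaller powers of $2$ contribute only even exponents. The main technical obstacle is organizing the lower-bound enumeration so that no small-area signature with powers-of-two data is overlooked; once the list is set up, the parity identities and the conjugation formula dispose of the cases uniformly.
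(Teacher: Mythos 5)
Your proposal is correct and reaches the right values, but its lower-bound argument follows a genuinely different route from the paper. The upper bounds are essentially the paper's: you realize the same minimal signatures $(1;-;[2,2^{n-2}];\{-\})$ and $(0;+;[2^{n-1}];\{(2)\})$, only with slightly different (but equally valid) epimorphisms — e.g. the paper sends $(d_1,\beta_1,\beta_2)\mapsto(x,y,yx^{2^{n-1}-2})$ where you use $(x^{2^{n-2}-1},y,yx^2)$ — and the passage to $H_3$ via $\psi_{1,1,0}$ is the same. For minimality, however, the paper does not enumerate areas directly: it quotes May--Zimmerman \cite{May10} to locate the minimal quotient signatures, Bujalance's list \cite{Bu0} of the three NEC signatures whose canonical Fuchsian subgroup is triangular (ruling out two because reflections can only map to $y$ and $yx^{2^{n-2}}$, which do not generate $QA_n$), and Harvey \cite{Ha} to pin down $(0;2,2^{n-1},2^{n-1})$; you instead run a self-contained reduced-area enumeration and kill the competing signatures by explicit order/parity computations in $QA_n$. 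Your enumeration claims are in fact correct: in case (1) the reflection-free, sign~$-$ constraint leaves only $(1;-;[2,2^k];\{-\})$ with $2\le k\le n-3$ below area $\tfrac12-\tfrac1{2^{n-2}}$, and there the relation forces $\theta(\beta_2)=y^{\delta}x^{2j}$ with $j$ odd, hence of order $2^{n-2}$, contradicting the period; in case (2) the only signatures of area below $\tfrac14-\tfrac1{2^{n-1}}$ with all period-cycle entries equal to $2$ and powers-of-two proper periods are $(0;+;[2^{j}];\{(2)\})$ with $3\le j\le n-2$, and these die exactly by your odd-conjugation argument. The one place you underdeliver is that this case (2) enumeration is asserted ("once the list is set up") rather than written out; since the list is short and your disposal of its unique surviving family is correct, this is a gap of exposition rather than of substance. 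What each approach buys: the paper's is shorter and leans on known minimality results for $2$-groups and cyclic groups, while yours is more elementary and self-contained, at the cost of having to certify completeness of the small-area signature list yourself.
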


The symmetric hyperbolic genus $\sigma^{hyp}(QA_{n})$ of the group $QA_{n}$ is given by  $\sigma^{hyp}(QA_{n})={\rm min}\{\sigma^{hyp}(QA_{n},QA_n^+)\}$. The above provides the following.

\begin{coro}\label{hyp}
If $n \geq 4$, then $\sigma^{hyp}(QA_{n})=2^{n-3}$.
\end{coro}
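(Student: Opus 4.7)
The plan is to derive the corollary as an immediate minimization using the three values supplied by Theorem \ref{hyp1}. Recall from the paragraph preceding the corollary that
\[
\sigma^{hyp}(QA_{n})=\min\bigl\{\sigma^{hyp}(QA_{n},H_{1}),\,\sigma^{hyp}(QA_{n},H_{2}),\,\sigma^{hyp}(QA_{n},H_{3})\bigr\},
\]
where $H_{1}\cong C_{2^{n-2}}\times C_{2}$ and $H_{2}\cong H_{3}\cong C_{2^{n-1}}$ are the three index-two subgroups of $QA_{n}$ described in Section \ref{Section 1}. Theorem \ref{hyp1} already computes the three numbers inside the braces: $\sigma^{hyp}(QA_{n},H_{1})=2^{n-2}-1$ and $\sigma^{hyp}(QA_{n},H_{2})=\sigma^{hyp}(QA_{n},H_{3})=2^{n-3}$. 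Therefore the proof reduces to the numerical comparison of $2^{n-2}-1$ with $2^{n-3}$.

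Writing $2^{n-2}-1=2\cdot 2^{n-3}-1$, the inequality $2^{n-2}-1>2^{n-3}$ is equivalent to $2^{n-3}>1$, which holds for every $n\geq 4$. Consequently the minimum of the three values equals $2^{n-3}$, attained on the subgroups $H_{2}$ and $H_{3}$, and the corollary follows. There is no serious obstacle here: the statement is a bookkeeping consequence of Theorem \ref{hyp1} once the definition of $\sigma^{hyp}(QA_{n})$ is unfolded, and I would write the proof as a single short paragraph. If desired, one can additionally remark (consistent with item (e) of Section \ref{Section 1}, where the automorphism $\psi_{1,1,0}$ permutes $\langle x\rangle$ and $\langle yx\rangle$) that $H_{2}$ and $H_{3}$ are interchanged by an automorphism of $QA_{n}$, which both explains the coincidence $\sigma^{hyp}(QA_{n},H_{2})=\sigma^{hyp}(QA_{n},H_{3})$ and shows that, up to equivalence, the minimum is realized by a single choice of index-two subgroup.
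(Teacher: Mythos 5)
Your proposal is correct and follows essentially the same route as the paper: Corollary \ref{hyp} is obtained by unfolding the definition of $\sigma^{hyp}(QA_{n})$ as the minimum over the index-two subgroups and quoting the values $2^{n-2}-1$ and $2^{n-3}$ from Theorem \ref{hyp1}, with the numerical comparison $2^{n-2}-1>2^{n-3}$ for $n\geq 4$ being immediate. The paper simply states that the theorem "provides" the corollary, so your explicit comparison and the remark about $H_{2}$, $H_{3}$ being swapped by $\psi_{1,1,0}$ are harmless elaborations of the same argument.
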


%%%%%%%%%%%%%%%
\subsection{Proof of Theorem \ref{hyp1}}
Let $S$ be a closed Riemann surface of genus $g\geq 2$ such that $QA_{n} \leq {\rm Aut}(S)$ and $QA_{n}^{+}=QA_{n} \cap {\rm Aut}^{+}(S)$ is any one of index two subgroups of $QA_n$. As the orders of the cyclic subgroups of $QA_{n}$ are divisors of $2^{n-1}$, $2^{n-2}$ and $2$, the orders of the conical points (if any) of the quotient orbifold $\mathcal{O}=S/QA_{n}^{+}$, are also of that form. The group $QA_{n}$ induces an anticonformal involution $\tau$ of the orbifold $\mathcal{O}$, so that $\mathcal{O}/ \langle \tau \rangle=S/QA_{n}$, moreover $\tau$ permutes the cone points of  $\mathcal{O}$ preserving the orders and acts without fixed points if $QA_{n}^{+}=H_1$, in otherwise acts with fixed points.
\medskip
 
\noindent{\bf Case} $QA_{n}^{+}=H_1$. By above, the number of conical points (if any) of $\mathcal{O}$ is even, say $2r$, and they are permuted in pairs by the involution $\tau$. This, in particular, asserts that $\mathcal{O}$ has signature of the form  $(h ; m_{1}, m_{1}, \ldots, m_{r}, m_{r})$ (see, \cite[Corollary 2.2.5]{BuEtaGamGro}), where $m_{j}\geqslant 2$, and $S/QA_{n} = \mathcal{O}/ \langle \tau \rangle$ is a closed hyperbolic non-orientable surface, say a connected sum of $h + 1$  real projective planes with $r$ cone points of orders $m_{1}, \ldots, m_{r}$. This means that there is a proper NEC group $\Delta$ with presentation $\Delta=\langle d_{1}, \cdots , d_{h +1}, \beta_1,\cdots, \beta_r:\ \beta_1^{m_{1}}=\cdots=\beta_r^{m_{r}}=\beta_1\cdots \beta_rd_{1}^2 \cdots d_{h+1}^2=1\rangle,$ where $d_{j}$ is a glide reflection and $\beta_j$ is an elliptic transformation, and there is an epimorphism $\theta:\Delta\to QA_{n}$ such that $ \theta(\beta_j)\in QA_{n}^{+}$ and $ \theta(d_j)\in QA_{n}\setminus QA_{n}^{+}$, with torsion-free kernel $\Gamma$ and $S = \mathbb{H}/\Gamma$. 
 By \cite{May10}, the minimal genus for $S$ is when $\mathcal{O}$ has signature
$ (0;2,2,2^{n-2},2^{n-2}).$ In this case, $\Delta$ has signature $(1;-;[2, 2^{n-2}];\{-\})$ and $\theta$ is given by $ \theta(d_{1})=x; \theta(\beta_1)=y; \theta(\beta_2)=yx^{2^{n-1}-2}.$ The Riemann-Hurwitz formula asserts that $S$ has genus $2^{n-2}-1$.

\medskip
\noindent{\bf Case} $QA_{n}^{+}\not=H_1$. In this case, the number of conical points (if any) of $\mathcal{O}$ could be odd, say $2r+t$, where the $2r$ points are permuted in pairs by the involution $\tau$, and the $t$ points are associated to the fixed points of  $\tau$ (the ovals). This, asserts that $\mathcal{O}$ has signature of the form \cite[Corollary 2.2.5]{BuEtaGamGro}: $$(h ; m_{1}, m_{1}, \ldots, m_{r}, m_{r}, n_{11},\ldots, n_{1s_{1}},\cdots, n_{k 1},\ldots, n_{k s_{k}}),$$ where $m_{j}, n_{l s_{l}}\geqslant 2$, $s_1+\cdots+s_k=t$, and $S/QA_{n}= \mathcal{O}/ \langle \tau \rangle$ is a bordered surface. By \cite{May10}, a minimal genus for $S$ is obtained when $\mathcal{O}$ has triangular signature $(0;m_1,m_2, m_3)$. So, there is a Fuchsian group $\Delta^+$ with signature $(0;+; [m_{1}, m_{2}, m_{3}];\{-\})$ as canonical Fuchsian group of an NEC group $\Delta$.
Then we have three possible signatures to consider for $\Delta$ \cite{Bu0}:
$$i)\ (0;+; [-];\lbrace ( m_{1}, m_{2}, m_{3})\rbrace),\quad ii)\ (0;+; [-];\lbrace ( m_{1}, m_{1}, m_{3})\rbrace),\quad iii)\ (0;+; [m_{1}];\lbrace ( m_{2})\rbrace).$$

\medskip
\noindent{\bf Cases} $i$) {and} $ii)$. In this cases, $\Delta$ has presentation
$\Delta=\langle c_{10}, c_{1 1}, c_{1 2}, c_{1 3}, e_{1}:\ c_{1j}^{2}=(c_{1 j-1}c_{1j})^{m_{j}}=1, e_{1}^{-1}c_{1 0}e_{1}c_{1 3}= e_{1}=1\rangle,$
and there exist no an epimorphism $\theta: \Delta \to QA_{n}$. If  such an epimorphism $\theta$ exists, then it has to preserve the relations of $\Delta$, in particular $(\theta(c_{1j}))^{2}=1$ and  $\theta(e_{1})=1$. But, $\theta(c_{1j})\in \lbrace y, yx^{2^{n-2}} \rbrace $ (Section \ref{Section 1}), and these elements do not generate the group $QA_{n}$.

\medskip
\noindent {\bf Case} $iii)$. By results in \cite{Ha}, a minimal genus for $S$ is possible when $\mathcal{O}$ has signature $(0;2^{n-1}, 2^{n-1}, 2)$. So, $\Delta$ has signature $(0; +;[2^{n-1}]; \{(2)\}$ and presentation
 $\Delta=\langle \beta_1, c_{10},c_{11},e_1:\ \beta_1^{2^{n-1}}=c_{1j}^2=(c_{10}c_{11})^2=1, e_1c_{10}e_1^{-1}c_{11}=\beta_1e_1=1\rangle.$
 
 An epimorphism $\theta:\Delta\to QA_{n}$ with torsion-free kernel such that $\theta(\Delta^{+})=QA_n^+$, is given by
  $\theta(\beta_1)=x,\ \theta(e_1)=x^{-1},\ \theta(c_{10})=y,\ \theta(c_{11})=yx^{2^{n-2}}$ (for $QA_n^+=H_2$) or $\theta(\beta_1)=yx,\ \theta(e_1)=(yx)^{-1},\ \theta(c_{10})=y,\ \theta(c_{11})=yx^{2^{n-2}}$ (for $QA_n^+=H_3$). The Riemann-Hurwitz formula asserts that $S$ has genus $2^{n-3}.$
 
 In summary, the genus $\sigma^{hyp}(QA_{n})$ is equal to $2^{n-3}$, with $QA_{n}^{+}\in \{H_2, H_3\}$.  
 \qed

%%%%%%%%%%%%
\subsection{Uniqueness on the symmetric hyperbolic genus}
\begin{theo}\label{tumshg}
Let $n \geq 4$ and $QA_n^+$ be an index two subgroup of $QA_{n}$. 
Then 
the  action of $QA_{n}$ (admitting anticonformal elements) on the genus $\sigma^{hyp}(QA_{n},QA_n^+)$ is unique (up to homeomorphisms). In particular, the  action of $QA_{n}$ on the genus $\sigma^{hyp}(QA_{n})$ is unique.
\end{theo}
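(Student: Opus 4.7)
The plan is to apply the topological equivalence criterion from Section \ref{Sec:signatures}: two conformal/anticonformal actions $\theta_1,\theta_2:\Delta\to QA_n$ with the same signature are topologically equivalent iff $\theta_2=\omega\circ\theta_1\circ\phi^{-1}$ for some $\omega\in{\rm Aut}(QA_n)$ and $\phi\in{\rm Aut}(\Delta)$. For each of the three index-two subgroups $QA_n^+\in\{H_1,H_2,H_3\}$, I would enumerate the surface-kernel epimorphisms $\theta:\Delta\to QA_n$ of the minimal signature already identified in the proof of Theorem \ref{hyp1}, and then absorb their parameters using the automorphism group ${\rm Aut}(QA_n)$ described in Section \ref{Section 1}(e).

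For $QA_n^+\in\{H_2,H_3\}$, the NEC group $\Delta$ has signature $(0;+;[2^{n-1}];\{(2)\})$ with presentation $\Delta=\langle\beta_1,c_{10},c_{11},e_1:\beta_1^{2^{n-1}}=c_{1j}^2=(c_{10}c_{11})^2=1,\ e_1c_{10}e_1^{-1}c_{11}=\beta_1 e_1=1\rangle$. The relations force $e_1=\beta_1^{-1}$ and $c_{11}=\beta_1^{-1}c_{10}\beta_1$, so any surface-kernel $\theta$ is determined by a generator $\theta(\beta_1)$ of $QA_n^+$ and an involution $\theta(c_{10})\in QA_n\setminus QA_n^+$; since $x^{2^{n-2}}$ lies in both $H_2$ and $H_3$, we must have $\theta(c_{10})\in\{y,yx^{2^{n-2}}\}$. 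Post-composing with suitable $\psi_{u,a,v}\in{\rm Aut}(QA_n)$ normalizes $\theta(\beta_1)$ to a chosen generator of $H_j$ and $\theta(c_{10})$ to $y$. Moreover, the automorphism $\psi_{1,1,0}$ exchanges $\langle x\rangle$ and $\langle yx\rangle$ (Section \ref{Section 1}(e)), so the $H_2$- and $H_3$-actions are related by an automorphism of $QA_n$. Since $2^{n-2}-1>2^{n-3}$ for $n\geq 4$, this immediately gives the ``in particular'' claim: $\sigma^{hyp}(QA_n)=2^{n-3}$ is realized only through $H_2,H_3$, and the two actions coincide up to homeomorphism.

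For $QA_n^+=H_1$ the NEC group has signature $(1;-;[2,2^{n-2}];\{-\})$, with presentation $\Delta=\langle d_1,\beta_1,\beta_2:\beta_1^2=\beta_2^{2^{n-2}}=\beta_1\beta_2 d_1^2=1\rangle$. Here $\theta(\beta_1)$ is an involution of $H_1$, $\theta(\beta_2)$ has order $2^{n-2}$ in $H_1\cong C_{2^{n-2}}\times C_2$, and $\theta(d_1)\in QA_n\setminus H_1$. Because $\langle x^2\rangle$ is the unique cyclic subgroup of $H_1$ of order $2^{n-2}$, $\theta(\beta_2)$ must have the form $x^{2m}$ or $yx^{2m}$ with suitable parity on $m$, and the relation $\beta_1\beta_2 d_1^2=1$ then cuts the remaining freedom down to finitely many parameters. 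As in the proof of Corollary \ref{strong}, these are absorbed by the automorphisms $\psi_{u,a,v}$ of $QA_n$, yielding a single equivalence class.

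The step I expect to be the main obstacle is the $H_1$-case: since the quotient orbifold is non-orientable, one must be careful to use only those $\phi\in{\rm Aut}(\Delta)$ induced by orientation-preserving self-homeomorphisms of $S$, equivalently those preserving the splitting $\Delta=\Delta^+\sqcup(\Delta\setminus\Delta^+)$. Checking that the parameter moves on $\theta$ produced above genuinely lie in this admissible subgroup (rather than requiring an orientation-reversing realization on $S$) is what will absorb most of the effort; the $H_2,H_3$ cases, by contrast, are essentially bookkeeping with the explicit ${\rm Aut}(QA_n)$ from Section \ref{Section 1}(e).
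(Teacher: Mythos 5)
Your overall strategy coincides with the paper's: take the minimal signatures found in the proof of Theorem \ref{hyp1}, enumerate the surface-kernel epimorphisms $\theta:\Delta\to QA_n$ with $\theta(\Delta^{+})=QA_n^{+}$, and absorb the parameters by automorphisms; your handling of $H_2,H_3$, including the use of $\psi_{1,1,0}$ to identify the $H_2$- and $H_3$-actions at genus $2^{n-3}$, is exactly what the paper does.

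The gap is in the $H_1$ case, precisely the step you flag as the obstacle. Saying the residual freedom is absorbed ``as in the proof of Corollary \ref{strong}'' does not suffice: the automorphisms used there, $\psi_{u,2,0}$ and $\psi_{1,2,v}$, preserve the cosets $\langle x\rangle$ and $y\langle x\rangle$ setwise, so they can never change whether $\theta(d_1)$ lies in $\langle x\rangle$ or in $y\langle x\rangle$. After normalizing with them one is left with two classes, $\theta_1(d_1)=x,\ \theta_1(\beta_1)=y,\ \theta_1(\beta_2)=yx^{2^{n-1}-2}$ and $\theta_2(d_1)=yx,\ \theta_2(\beta_1)=y,\ \theta_2(\beta_2)=yx^{2^{n-2}-2}$, and an extra move is required to merge them. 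The paper supplies it with an explicit automorphism $L$ of $\Delta$, namely $L(d_1)=\beta_1 d_1^{-1}$, $L(\beta_1)=d_1\beta_1 d_1^{-1}$, $L(\beta_2)=\beta_2^{-1}$, composed with group automorphisms; alternatively one checks directly that $\psi_{1,1,0}\circ\theta_1=\theta_2$ and that $\psi_{1,1,0}$ preserves both $H_1$ and the anticonformal coset, which stays within your ``post-compose by $\psi_{u,a,v}$'' framework but needs the $a=1$ automorphisms, not the moves of Corollary \ref{strong}. Two smaller corrections: $\langle x^2\rangle$ is not the unique cyclic subgroup of order $2^{n-2}$ in $H_1$ (e.g.\ $\langle yx^{2}\rangle$ is another); what one actually needs is that surjectivity together with $\theta(\Delta^{+})=H_1$ rules out $\theta(\beta_1)=x^{2^{n-2}}$, and then the relation $\beta_1\beta_2d_1^{2}=1$ forces $\theta(\beta_2)=yx^{j}$ with ${\rm gcd}(j,2^{n-1})=2$. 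Finally, your closing worry about orientation is not where the difficulty lies: the paper's working criterion $\theta_2=\omega\circ\theta_1\circ\phi^{-1}$ imposes no extra condition on $\phi$, and since the quotient orbifold is non-orientable here, a homeomorphism realizing $\phi$ (up to inner automorphisms) can be chosen orientation-preserving on $S$ because $\Delta$ itself contains orientation-reversing elements.
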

\begin{proof} We consider the group $QA_{n}$, and the index two subgroups $QA_n^+ \in \{H_1, H_2, H_3\}$. 
Let $S/QA_{n}=\h/\Delta$, where $\Delta$ is a proper NEC group,  and  $S/QA_n^+=\h/\Delta^{+}$, with $\Delta^{+}$
 the corresponding canonical Fuchsian group of $\Delta$. 
We need to prove that, up to pre-composition by automorphisms of $\Delta$ and post-composition by automorphisms of $QA_{n}$, there is exactly one epimorphism $\theta:\Delta\to QA_{n}$ such that $\theta(\Delta^{+})=QA_n^+$  with torsion-free kernel. 

\medskip
\noindent{\bf Case $QA_n^+= H_1$}.
In this case, as consequence of the proof of Theorem \ref{hyp1}, the quotient surface $S/QA_{n}$ has signature $(1;-;[2, 2^{n-2}];\{-\})$ and the NEC group $\Delta$ has presentation $\Delta=\langle d_1,\beta_1, \beta_2:\ \beta_1^2=\beta_2^{2^{n-2}}=1, \beta_1\beta_2d_1^2=1\rangle$, and the canonical Fuchsian group $\Delta^+$ has presentation 
$\Delta^{+}=\langle d_1^2,\beta_1,\beta_2, d_1\beta_1d_1^{-1}, d_1\beta_2d_1^{-1}\rangle.$ 
The condition for $\theta$ to have torsion-free kernel ensures that $\theta(\beta_{1})$ is an order two element of $QA_{n}$ and $\theta(\beta_{2})$ is an order $2^{n-2}$ element of $QA_{n}$, that is,
$\theta(\beta_{1}) \in \lbrace y, yx^{2^{n-2}}\}$ and $\theta(\beta_{2}) \in \lbrace yx^{j}, \, {\rm gcd}(j,2^{n-1})=2 \rbrace$.
As $\theta$ is surjective and $\theta(\Delta^{+})=QA_n^+$, we must also have $\theta(d_{1})=y^tx^{m}$, where $m$ is odd and $t\in\{0,1\}$.
As $\theta$ is homomorphism, we must also have the relation 
$$(*) \quad \theta(\beta_{1})\theta(\beta_{2})\theta(d_{1})^{2}=1.$$

If $t=0$, then (up to post-composition by automorphisms of $QA_{n}$ of the form $\psi_{u,2,0}$ (where $um\equiv 1 \, {\rm mod} \, (2^{n-1})$) and $\psi_{1,2,v}$ (where $2^{n-2}u+v\equiv 0 \, {\rm mod} \, (2^{n-1})$)), we may assume $\theta(d_{1})=x$ and $\theta(\beta_1)=y$. Now, the equality $(*)$ asserts that the value of $\theta(\beta_{2})$ must be $yx^{2^{n-1}-2}$, i.e., all of these possibilities are equivalent to
$$\theta_1:\Delta\to QA_{n}:\ \theta_1(d_1)=x;\ \theta_1(\beta_1)=y;\ \theta(\beta_2)=yx^{2^{n-1}-2}.$$ 

If $t=1$, proceeding in a similar fashion as in the previous case, we may obtain that $\theta$ is equivalent to 
$$\theta_2:\Delta\to QA_{n}:\ \theta_2(d_1)=yx;\ \theta_2(\beta_1)=y;\, \theta_{2}(\beta_2)=yx^{2^{n-2}-2}.$$

On the other hand, considering the automorphism $L$ of $\Delta$ given by $$L(d_{1})=\beta_1d_1^{-1}; \, L(\beta_1)=d_1\beta_1d_1^{-1}; \ L(\beta_2)=\beta_{2}^{-1},$$ we note that $\widehat{\theta}=\psi_{u,2,0} \circ \psi_{1,2,v}\circ \theta_1\circ L:\Delta\to QA_{n}$ is given by 
$ \widehat{\theta}=(d_1)=yx;\ \widehat{\theta}=(\beta_1)=y;\ \widehat{\theta}=(\beta_2)=yx^{2^{n-2}-2},$
where $v+2^{n-2}\equiv \, 0\, ({\rm mod}\ 2^{n-1})$ and $u(v-1)\equiv \, 1\, ({\rm mod}\ 2^{n-1})$, which corresponds to $\theta_2$. Therefore, $\theta_1$ is equivalent to $\theta_2$.  

\medskip
\noindent{\bf Case $QA_n^+\in \{H_2, H_3\}$}. 
By the proof of Theorem \ref{hyp1}, the quotient surface $S/QA_{n}$ has signature of the form $(0;+;[2^{n-1}];\{(2)\})$. So, 
 $\Delta\!=\!\langle \beta_1, c_{10}, c_{11}, e_1:\! \beta_1^{2^{n-1}}\!=\!c_{1j}^2\!=\!(c_{10}c_{11})^{2}\!=\! e_1c_{10}e_1^{-1}c_{11}\!=\!\beta_1e_1\!=\!1\rangle.$ 
In this case, 
$\theta(\beta_{1})\in \{yx^i,x^{i}, \text{$i$-odd}\}\subset QA_n^+;$ $\theta(e_1)=\theta(\beta_1)^{-1};\ \theta(c_{10}) \in \lbrace  y, yx^{2^{n-2}}\};$ $\theta(c_{11}) \in \lbrace  y, yx^{2^{n-2}}\},$ $(\theta(c_{10})\theta(c_{11}))^{2}=1$ and $\theta(e_1)\theta(c_{10})\theta(e_1)^{-1}\theta(c_{11})=1.$

All of these possibilities are equivalent (up to post-composition by some automorphisms of $QA_n$ of the form $\psi_{u,2,0}$, $\psi_{1,2,v}$ and  $\psi_{1,1,0}$) to
$$\theta_1:\Delta\to QA_{n}:\ \theta_1(\beta_1)=x;\ \theta_1(e_1)=x^{-1};\ \theta_1(c_{10})=y;\ \theta_1(c_{11})=yx^{2^{n-2}}.$$
\end{proof}

%%%%%%%%%%%%%%%%%%%%%%%%
%%%%%%%%%%%%%%%%%%%%%%%%
\section{Pseudo-real actions of the group $QA_{n}$}
In this section, we consider conformal/anticonformal actions of the group quasi-abelian $QA_{n}=\langle x, y\rangle$, presented as in (\ref{g1}), on pseudo-real Riemann surfaces. There are two cases to consider: either $QA_{n}$ has anticonformal elements or $QA_{n}$ only contains conformal elements.

\subsection{Conformal/anticonformal actions of $QA_{n}$ on pseudo-real Riemann surfaces}
In this subsection, we look for pseudo-real Riemann surfaces $S$ with $QA_{n} \leq {\rm Aut}(S)$ and $QA_{n} \neq QA_{n} \cap {\rm Aut}^{+}(S)$. As the only index two subgroup of $QA_{n}$ containing all the involutions of $QA_{n}$ is $H_1= \langle x^{2},y\rangle\cong C_{2^{n-2}}\times C_2$, we construct pseudo-real Riemann surfaces $S$ with $QA_{n} \leq {\rm Aut}(S)$ and $QA_{n} \cap {\rm Aut}^{+}(S)=H_1$.

\begin{theo}\label{tps0}
Let $l\geq 2$, $n\geq 4$ be integers and $r\geq 1$ be an odd integer. Then there are pseudo-real Riemann surfaces $S$ of genus $2^{n-2}l+2r(2^{n-2}-1)-2^{n-1}+1$, such that ${\rm Aut}(S) =QA_n$ and ${\rm Aut}^{+}(S) =H_1$.
\end{theo}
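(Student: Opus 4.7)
My plan is to realize $S$ as $\mathbb{H}/\Gamma$, where $\Gamma=\ker\theta$ for a surface-kernel epimorphism $\theta:\Delta\to QA_n$ from a suitable proper NEC group $\Delta$. Since $H_1$ is the unique index-two subgroup of $QA_n$ containing every involution (property (c) in Section~\ref{Section 1}), any reflection in $\Delta$ would map to an involution of $QA_n\setminus H_1$, which does not exist. Hence $\Delta$ has no period-cycles, and its signature has the form $(h;-;[m_1,\ldots,m_t];\{-\})$, with glide reflections mapping into $QA_n\setminus H_1$ and elliptic generators into $H_1$; in particular each $m_i$ divides the exponent $2^{n-2}$ of $H_1\cong C_{2^{n-2}}\times C_2$.

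A direct computation with (\ref{E22}) shows that $(2g-2)/2^n=l/2-1+r(1-1/2^{n-2})$ is matched by
\[
s(\Delta)=\bigl(1;-;[\underbrace{2^{n-2},\ldots,2^{n-2}}_{r},\underbrace{2,\ldots,2}_{l}];\{-\}\bigr),
\]
whose long relation reads $\beta_1\cdots\beta_r\gamma_1\cdots\gamma_l d_1^2=1$. I would define
\[
\theta(d_1)=x,\ \theta(\beta_i)=x^{2a_i}\ (a_i\text{ odd}),\ \theta(\gamma_1)=\theta(\gamma_2)=y,\ \theta(\gamma_j)=x^{2^{n-2}}\ (j\geq 3),
\]
and pick the odd $a_i$'s so that $\sum a_i\equiv -1-(l-2)\cdot 2^{n-3}\pmod{2^{n-2}}$; since the right-hand side is odd and $r$ is odd, such a choice with $a_i\in\{1,3,\ldots,2^{n-2}-1\}$ exists, and the long relation then collapses (using $y^2=1$) to $x^{2\sum a_i+(l-2)2^{n-2}+2}=1$ in $QA_n$. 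The image contains $x$ and $y$, hence equals $QA_n$, while the orders of $\theta(\beta_i)$ and $\theta(\gamma_j)$ match the corresponding periods, so $\ker\theta$ is torsion-free. Riemann--Hurwitz then yields the stated genus, and by construction $QA_n\cap{\rm Aut}^{+}(S)=H_1$.

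Every element of $QA_n\setminus H_1$ has order $2^{n-1}\geq 8$, so $S$ admits anticonformal automorphisms (the image of $d_1$) but no anticonformal involution coming from $QA_n$; pseudo-reality therefore follows as soon as ${\rm Aut}(S)=QA_n$ is known. To establish the latter, I would appeal to the classification of maximal NEC signatures \cite{Wil} and verify that $s(\Delta)$ is not on the non-maximal list, which rules out any NEC overgroup of $\Delta$ and hence any overgroup of $QA_n$ inside ${\rm Aut}(S)$.

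The main obstacle is this last step: one must inspect the table of non-maximal NEC signatures across all parameter ranges $(n,l,r)$ and, for any borderline entries, verify that the specific $\theta$ above does not factor through the admissible NEC inclusion. The remaining tasks---surjectivity, torsion-freeness, solvability of the long relation, and the Riemann--Hurwitz computation---reduce to elementary manipulations in $QA_n$ using $yx^k=x^{(2^{n-2}+1)k}y$.
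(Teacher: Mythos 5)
Your proposal follows essentially the same route as the paper: the same NEC signature $(1;-;[2,\stackrel{l}{\cdots},2,2^{n-2},\stackrel{r}{\cdots},2^{n-2}];\{-\})$, a surface-kernel epimorphism onto $QA_n$ sending the glide reflection outside $H_1$ so that $\theta^{-1}(H_1)=\Delta^{+}$, pseudo-reality from the fact that every element of $QA_n\setminus H_1$ has order $2^{n-1}$, and maximality via non-maximal-signature lists (your single assignment with adjustable odd exponents $a_i$ is just a uniform variant of the paper's two-case epimorphism for $l$ even/odd). The step you flag as the main obstacle is exactly what the paper dispatches in one line: since $S/H_1$ has genus $0$ and $2l+2r\geq 6$ cone points, its signature does not occur on Singerman's list of non-maximal Fuchsian signatures \cite{S.} (note that \cite{Wil} is not the right reference for that classification), so $\Delta$ may be chosen so that ${\rm Aut}(S)=QA_n$ and ${\rm Aut}^{+}(S)=H_1$.
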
 

\begin{proof}
Let $n\geq 4$ be an integer and let us consider an NEC group $\Delta$ with signature $(1;-;[2,\stackrel{l}{\cdots},2,2^{n-2},\stackrel{r}{\cdots},2^{n-2}];\{-\})$ and presentation $\Delta\!=\!\langle d_1, a_1,\! \cdots\!, a_l, b_1,\!\cdots,\!b_r\rangle$, where its generators satisfy the relations $a_1^2=\!\cdots=\!a_l^2=\!b_1^{2^{n-2}}\!=\!\cdots=\!b_r^{2^{n-2}}\!=\!\prod_{j=1}^la_j\prod_{i=1}^rb_id_1^2\!=1$, where $l\geq 2$ be an integer and $r\geq 1$ be an odd integer. 
The quotient Klein surface $\h/\Delta$ is an orbifold whose underlying surface is the real projective plane with $l+r$ conical points. 

Now, we consider the epimorphism $\theta:\Delta\to QA_n$ given by

 \noindent{($l$ even,\ $1\leq j\leq l$, $2\leq s\leq r-1$ and ${\rm gcd}(t_s, 2^{n-1})=2$)}\begin{small}
 $$\theta(d_1)=yx; \theta(a_j)=y; \theta(b_1)=x^{2^{n-2}-2}; \theta(b_{s})=x^{t_s};\theta({b_{s+1}})=x^{-t_s}.$$ 
  \end{small}
 \noindent{($l$ odd,\ $1\leq j\leq l-1$, $2\leq s\leq r-1$ and ${\rm gcd}(t_s, 2^{n-1})=2$)} 
 \begin{small}
 $$\theta(d_1)=x; \theta(a_l)=x^{2^{n-2}}; \theta(a_j)=y; 
 \theta(b_1)=x^{2^{n-2}-2}; \theta(b_{s})=x^{t_s}; \theta({b_{s+1}})=x^{-t_s}.$$
  \end{small}
  The kernel $\Gamma$ of $\theta$ is a torsion-free subgroup (contained in the half-orientation part $\Delta^{+}$ of $\Delta$) such that $S=\mathbb{H}/\Gamma$ is a closed Riemann surface with $QA_{n} \leq {\rm Aut}(S)$ and $H_1\leq {\rm Aut}^{+}(S)$. As the signature of $S/H_1$  is $(0; 2,\stackrel{2l}{\cdots},2, 2^{n-2},\stackrel{2r}{\cdots},2^{n-2})$, where the number of cone points is exactly $2l+2r\geq 6$, it follows (from Singerman list of maximal Fuchsian groups \cite{S.}) that we may choose $\Delta$ so that ${\rm Aut}(S)=QA_{n}$. In this case, as the only anticonformal  automorphisms of $S$ are the elements of $QA_{n} \setminus H_1$ (which have order different than two) it follows that $S$ is a pseudo-real Riemann surface. The Riemann-Hurwitz formula asserts that $S$ has genus $2^{n-2}l+2(2^{n-2}-1)r-2^{n-1}+1.$\end{proof}

  \begin{prop}\label{Notps}
There are not exists a faithful essential action of the group quasi-abelian $QA_{n}$, presented as in (\ref{g1}),  on a pseudo-real surface with signature $(1;-; [ j, k]; \lbrace -\rbrace)$ for $2 \leqslant j \leqslant k$, or $(2; -; [k]; \lbrace -\rbrace)$ for $k \geqslant 2$, or $(3;-;[-];\lbrace -\rbrace)$.
\end{prop}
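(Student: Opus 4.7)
The plan is to argue by contradiction: suppose a faithful essential pseudo-real action exists, i.e., there are an NEC group $\Delta$ with one of the listed signatures and an epimorphism $\theta:\Delta\to QA_n$ with torsion-free kernel $\Gamma$ such that $S=\h/\Gamma$ is pseudo-real. Since $H_1$ is the unique index-two subgroup of $QA_n$ containing all three involutions $y,\,yx^{2^{n-2}},\,x^{2^{n-2}}$ (Section~\ref{Section 1}), pseudo-reality forces $\theta^{-1}(H_1)=\Delta^{+}$, so each glide-reflection generator $d_i$ is sent to an element $\alpha_i\in QA_n\setminus H_1=\{y^{s}x^{2t+1}:s\in\{0,1\}\}$. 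The preliminary calculation that drives the whole proof is that, using $yx^{r}=x^{(2^{n-2}+1)r}y$, one has $\alpha_i^{2}=x^{2m_i}$ with $m_i$ odd; under the additive identification $\langle x^{2}\rangle\cong C_{2^{n-2}}$, each $\alpha_i^{2}$ is a generator of $\langle x^{2}\rangle$ (it has ``odd coordinate'').

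For the signature $(3;-;[-];\{-\})$, the single relation $d_{1}^{2}d_{2}^{2}d_{3}^{2}=1$ becomes $x^{2(m_{1}+m_{2}+m_{3})}=1$, requiring $2^{n-2}\mid m_{1}+m_{2}+m_{3}$. But a sum of three odd integers is odd, while $2^{n-2}\geq 4$ for $n\geq 4$; this yields the contradiction and is the cleanest case, purely combinatorial.

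For the signature $(2;-;[k];\{-\})$, the relation gives $\theta(\beta_{1})=x^{-2(m_{1}+m_{2})}\in\langle x^{4}\rangle$, so $k\mid 2^{n-3}$. I then split into subcases according to the cosets of $\langle x\rangle$ containing $\alpha_{1},\alpha_{2}$: if both $\alpha_{i}\in\langle x\rangle$, then $\theta(\Delta)\subseteq H_{2}$; if both $\alpha_{i}\in y\langle x\rangle$, the computation $\alpha_{1}\alpha_{2}^{-1}=x^{(2^{n-2}+1)(a-b)}\in\langle x^{2}\rangle$ together with $\langle yx^{a}\rangle=H_{3}$ (for $a$ odd) yields $\theta(\Delta)\subseteq H_{3}$; both subcases violate surjectivity. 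In the remaining mixed subcase $\theta$ is surjective, and I invoke the Bujalance--Singerman classification of non-maximal NEC signatures, which places $(2;-;[k];\{-\})$ inside a larger NEC group $\Delta'$ of index two whose extra generator is an anticonformal involution; checking that $\Gamma$ remains normal in $\Delta'$ exhibits an anticonformal involution of $S$ outside $QA_n$, contradicting pseudo-reality. The signature $(1;-;[j,k];\{-\})$ is handled in the same spirit: writing $\theta(\beta_{i})=x^{2u_{i}}y^{\epsilon_{i}}$, the relation $\beta_{1}\beta_{2}d_{1}^{2}=1$ forces $\epsilon_{1}=\epsilon_{2}$ and $u_{1}+u_{2}$ odd modulo $2^{n-2}$; the case $\epsilon_{1}=\epsilon_{2}=0$ sends $\theta(\Delta)$ into $\langle \alpha_{1},x^{2}\rangle\in\{H_{2},H_{3}\}$ (surjectivity failure), while the case $\epsilon_{1}=\epsilon_{2}=1$, combined with order computations in $H_{1}\cong C_{2^{n-2}}\times C_{2}$, shows $|\theta(\beta_{i})|=2^{n-2}$ exactly when $u_{i}$ is odd, forcing $k=2^{n-2}$ and $j<2^{n-2}$, after which the same non-maximality argument furnishes an anticonformal involution in ${\rm Aut}(S)\setminus QA_n$.

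The main obstacle is exactly the non-maximality step in the surjective subcases of the last two signatures: unlike the clean combinatorial obstruction available for $(3;-;[-];\{-\})$, there is no direct algebraic impediment to $\theta$, so one must explicitly exhibit an NEC overgroup $\Delta'$ of $\Delta$ of index two, verify that $\Gamma=\ker\theta$ is still normal in $\Delta'$, and identify the extra generator as inducing an anticonformal involution of $S$. I would carry out this step by appeal to the existing Bujalance-type classification of non-maximal NEC signatures and the induced outer action of $\Delta'/\Delta\cong C_{2}$ on $QA_n$, rather than by a bare-hands construction.
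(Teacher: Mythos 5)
Your treatment of the signature $(3;-;[-];\{-\})$ is correct and is essentially the paper's argument: the generators $d_i$ must map into $QA_n\setminus H_1$, each square is $x^{2m_i}$ with $m_i$ odd, and the long relation forces $x^{2(m_1+m_2+m_3)}=1$ with an odd sum, a contradiction. Your coset bookkeeping for the other two signatures (both generators in $\langle x\rangle$ gives image in $H_2$, both in $y\langle x\rangle$ gives image in $H_3$) is also fine as far as it goes. The genuine gap is exactly where you place "the main obstacle": in the surviving mixed subcases of $(1;-;[j,k];\{-\})$ and $(2;-;[k];\{-\})$, surjective surface-kernel epimorphisms really do exist (e.g.\ for $n=4$ and signature $(2;-;[2];\{-\})$ one may take $\theta(d_1)=x$, $\theta(d_2)=yx^{3}$, $\theta(\beta_1)=x^{4}$), so the proposition for these signatures is carried entirely by the extension step, and your proposed justification of that step does not work as stated. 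The Bujalance--Singerman lists of non-maximal NEC signatures only give an abstract inclusion $\Delta\le\Delta'$ of index two with $\Delta'$ containing reflections; they say nothing about whether the particular kernel $\Gamma=\ker\theta$ is normal in $\Delta'$. That normality is equivalent to the conjugation automorphism of $\Delta$ induced by the extra generator of $\Delta'$ being compatible with $\theta$ up to an automorphism of $QA_n$, i.e.\ to the existence of an automorphism of $QA_n$ inverting both members of the generating pair $(\theta(d_1),\theta(\beta_1))$, respectively $(\theta(d_1),\theta(d_2))$. "Checking that $\Gamma$ remains normal in $\Delta'$" is therefore not a verification one can read off from the classification or from the outer $C_2$-action on $\Delta$; it is precisely the group-theoretic input you have not supplied.

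The paper closes this gap with a single citation: since $QA_n$ is a split metacyclic group, \cite[Proposition 5.1]{BuCC20} guarantees an automorphism of $QA_n$ inverting each element of the relevant generating pair, and this is what allows the action to be extended to a larger NEC group containing reflections, producing an anticonformal involution on $S$ and contradicting pseudo-reality. (In your example above, the automorphism $\psi_{7,2,4}$ of Section \ref{Section 1} sends $x\mapsto x^{-1}$ and $yx^{3}\mapsto(yx^{3})^{-1}$, illustrating the general fact.) To repair your proof you must either quote this result, or prove directly that every generating pair of $QA_n$ of the relevant type admits an inverting automorphism, and then spell out why this makes $\Gamma$ normal in $\Delta'$; without that, the first two signatures are not excluded. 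A cleaner organization, as in the paper, is to drop the coset subcase analysis altogether (it is subsumed by surjectivity) and apply the inverting-automorphism criterion uniformly to both signatures, reserving the direct computation only for $(3;-;[-];\{-\})$.
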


\begin{proof}
Suppose to the contrary that the quasi-abelian group $QA_{n}$ has a faithful essential action on a pseudo-real surface $S$ with one of the given signatures, i.e., there is an epimorphism $\theta:\Delta\to QA_{n}$ with kernel a torsion-free Fuschsian group uniformizing $S$. In the first case we have an NEC group $\Delta$ with presentation $\Delta=\langle d_1,\beta_1,\beta_2:\ \beta_1^j=\beta_2^k=d_1^{2}\beta_1\beta_2=1\rangle$, and $QA_{n}$ can be generated by elements $\theta(d_1)$ and $\theta(\beta_1)$ such that $\theta(\beta_1)$ has order $j$ and $\theta(d_1)^{2}\theta(\beta_1)$ has order $k$, in the second case $\Delta$ has presentation $\Delta=\langle d_1, d_2,\beta_1:\ \beta_1^k=d_2^2d_1^{2}\beta_1=1\rangle$, and $QA_{n}$ can be generated by elements $\theta(d_1)$ and $\theta(d_2)$ such that $\theta(d_2)^{2}\theta(d_1)^2$ has order $k$. In both cases, however, since $QA_{n}$ is a split metacyclic group it admits an automorphism that inverts each of the elements in the generating pair \cite[Proposition 5.1]{BuCC20}, so the action of $QA_n$ on $S$ can be extended to one of a larger group on $S$ with reflections, contradiction.

 In third case, we have $\Delta$ with presentation $\Delta=\langle d_1,d_2,d_3:\ d_1^{2}d_2^{2}d_3^{2}=1\rangle$, where $\theta(d_j)\in QA_n\setminus \langle x^{2},y\rangle=\{y^{i}x^{l},\ i\in \{0,1\},\ l-{\rm odd} \}$, and as $\theta$ is a homomorphism, we must also have the relation $\theta(d_1)^{2}\theta(d_2)^{2}\theta(d_3)^{2}=1,$ and this implies that $x^{s}=1$ with $s\not\equiv 0\ {\rm mod}(2^{n-1})$, contradiction.
\end{proof}

Next on, we look for the genus $\psi^*(QA_n)$ and $\psi(QA_n)$.
 
 \begin{theo}\label{mps+0} If $n \geq 4$, then $\psi^*(QA_n)=2^{n-1}-1$. Furthermore, these pseudo-real Riemann surfaces are not hyperelliptic.
\end{theo}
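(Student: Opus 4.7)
The plan is to determine the minimum reduced area of an NEC group $\Delta$ uniformising $S/QA_n$ for a pseudo-real Riemann surface $S$ with $H_1=QA_n\cap{\rm Aut}^+(S)$, realise this bound via Theorem \ref{tps0} with $l=2$, $r=1$, and verify non-hyperellipticity by counting fixed points. Let $\theta:\Delta\to QA_n$ be the surface-kernel epimorphism. Since every element of $QA_n\setminus H_1$ has order $2^{n-1}\geq 8$, no element of $\Delta$ maps to an anticonformal involution; hence $\Delta$ has no reflections and its signature is $(h;-;[m_1,\dots,m_r];\{-\})$ with each $m_i\in\{2,4,\dots,2^{n-2}\}$.

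For the lower bound $\psi^*(QA_n)\geq 2^{n-1}-1$, I would enumerate such signatures in increasing hyperbolic area. Proposition \ref{Notps} eliminates $(1;-;[j,k];\{-\})$, $(2;-;[k];\{-\})$ and $(3;-;[-];\{-\})$, so the smallest-area survivors lie in the family $(1;-;[m_1,m_2,m_3];\{-\})$. The surface relation reads $\theta(\beta_1)\theta(\beta_2)\theta(\beta_3)=\theta(d_1)^{-2}$ inside the abelian group $H_1\cong C_{2^{n-2}}\times C_2$; writing elements of $H_1$ as pairs $(a,b)\in\mathbb{Z}/2^{n-2}\times\mathbb{Z}/2$, an element has order $2^{n-2}$ iff $a$ is odd. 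Using the commutation identity $yx^k=x^{(2^{n-2}+1)k}y$, a direct computation shows that $\theta(d_1)^2$ has odd first coordinate (hence order $2^{n-2}$) for every $\theta(d_1)\in QA_n\setminus H_1$, while every element of $H_1$ of smaller order has even first coordinate. Consequently at least one $m_i$ must equal $2^{n-2}$. The minimum over this family is $(m_1,m_2,m_3)=(2,2,2^{n-2})$ with reduced area $1-2^{-(n-2)}$, yielding $g=2^{n-1}-1$ by Riemann-Hurwitz. All other admissible signatures give strictly larger genus.

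The upper bound follows from Theorem \ref{tps0} with $l=2$, $r=1$: the formula $2^{n-2}l+2r(2^{n-2}-1)-2^{n-1}+1$ evaluates to $2^{n-1}-1$, and the Singerman-maximality argument in that proof guarantees ${\rm Aut}(S)=QA_n$, hence the pseudo-real condition. For non-hyperellipticity, since $\langle\iota\rangle$ is normal in ${\rm Aut}(S)=QA_n$, a hyperelliptic involution $\iota$ must lie in $Z(QA_n)=\langle x^{2^{n-2}}\rangle$, forcing $\iota=x^{2^{n-2}}$. Reading off the signature $(0;2,2,2,2,2^{n-2},2^{n-2})$ of $S/H_1$, the only cone-point stabilisers in $H_1$ containing $x^{2^{n-2}}$ are the two copies of $\langle x^2\rangle$ coming from the $\beta_3$-orbit and its $d_1$-conjugate; each contributes $|H_1|/|\langle x^2\rangle|=2$ fixed points in $S$, for a total of $4$, far below the $2g+2=2^n$ required. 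Hence $S$ is not hyperelliptic.

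The main obstacle is the parity step in the second paragraph: without it, the signatures $(1;-;[2,2,m];\{-\})$ with $m\in\{2,4,\dots,2^{n-3}\}$ would produce genera $2^{n-2},\,3\cdot 2^{n-3}+1,\dots$, strictly below $2^{n-1}-1$; the $C_{2^{n-2}}$-coordinate obstruction is exactly what excludes these and pins the minimum at $m=2^{n-2}$.
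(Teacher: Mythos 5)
Your proposal is correct and follows the same skeleton as the paper's proof (upper bound from Theorem \ref{tps0} with $l=2$, $r=1$; lower bound by listing the NEC signatures with reduced area below $1-2^{-(n-2)}$ and discarding $(1;-;[j,k];\{-\})$, $(2;-;[k];\{-\})$ via Proposition \ref{Notps}), but it replaces the paper's two citations by direct computations. To kill the remaining family $(1;-;[2,2,m_3];\{-\})$ with $m_3\le 2^{n-3}$, the paper passes to the canonical Fuchsian subgroup, of signature $(0;+;[2,2,2,2,m_3,m_3];\{-\})$, and invokes \cite[Theorem 9.1]{Br00} for the abelian group $H_1\cong C_{2^{n-2}}\times C_2$ to force $m_3=2^{n-2}$; your parity argument does the same job self-containedly: since $(x^i)^2=x^{2i}$ and $(yx^i)^2=x^{(2^{n-2}+2)i}=(x^2)^{(2^{n-3}+1)i}$ with $i$ odd, every $\theta(d_1)^2$ has odd $C_{2^{n-2}}$-coordinate, hence order $2^{n-2}$, while a product of elements of order at most $2^{n-3}$ has even coordinate, contradicting the long relation. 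For non-hyperellipticity, the paper cites \cite[Theorem 2.7]{ABCNPW} to get the signature $(2^{n-2}-1;2,2,2,2)$ of $S/\langle x^{2^{n-2}}\rangle$; your count of $4$ fixed points of $x^{2^{n-2}}$ against $2g+2=2^{n}$ is exactly the same fact read through Riemann--Hurwitz, so the two arguments are equivalent in substance, yours being the more elementary.

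Two small remarks. First, $Z(QA_n)=\langle x^{2}\rangle$, not $\langle x^{2^{n-2}}\rangle$; what you actually need (and what is true) is that $x^{2^{n-2}}$ is the unique involution in the center, so the conclusion $\iota=x^{2^{n-2}}$ stands. Second, the count of exactly $4$ fixed points is tied to the particular epimorphism of Theorem \ref{tps0} with $l=2$, $r=1$ (the class $\theta_1$ of Proposition \ref{Ups0}); for the other class $\theta_3$ the order-two cone points have stabilizer $\langle x^{2^{n-2}}\rangle$ and the count becomes $2^{n-1}+4$, which is still smaller than $2^{n}$, so the conclusion is unaffected (and the paper's own argument has the same scope).
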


\begin{proof} 
By Theorem \ref{tps0} (for $l=2$ and $r=1$), there exists a pseudo-real Riemann surface $S$ of genus $g_0=2^{n-1}-1$ such that ${\rm Aut}(S)=QA_{n}$, ${\rm Aut}^{+}(S)=H_1$ and $S/QA_{n}$ has signature $(1;-;[2,2,2^{n-2}];\{-\})$. 
It remains to show that $g_{0}$ is minimal with that property. By the contrary, assume there is some pseudo-real Riemann surface $R$ of genus $g<g_{0}$ such that $QA_n={\rm Aut}(R)$ and  $H_1={\rm Aut}^{+}(R)$. In this case, there is an NEC group $\Delta$ of signature $(\gamma;-;[m_1,\cdots, m_r];\{-\})$, with $\gamma>0$ and $g=1+\frac{|QA_n|}{2}|\Delta|^*$, where $|\Delta|^*=\gamma-2+\sum_{i=1}^r\left(1-\frac{1}{m_i}\right)$, such that ${\mathbb H}/\Delta=R/QA_{n}$, and there is an epimorphism $\theta:\Delta \to QA_{n}$ with kernel a torsion-free Fuchsian group uniformizing $R$. The condition $g<g_{0}$ asserts that  $|\Delta|^*<1-\frac{1}{2^{n-2}}$. Therefore, $\gamma \in \{1, 2\}$.

\noindent({\bf 1}) If $\gamma=2$, then $|\Delta|^*=\sum_{i=1}^r\left(1-\frac{1}{m_i}\right)<1-\frac{1}{2^{n-2}}$, so $r\in \{0,1\}$. The case $r=0$ is not possible by that $QA_n$ acts on Riemann surfaces of genus $g\geq 2$. If $r=1$, then $\Delta$ has signature $(2;-;[m];\{-\})$, where $m\in \{2,\cdots, 2^{n-3}\}$, by Proposition \ref{Notps} this is not possible.

 \noindent({\bf 2}) If $\gamma=1$, then $|\Delta|^*=-1+\sum_{i=1}^r\left(1-\frac{1}{m_i}\right)<1-\frac{1}{2^{n-2}}$, so $r\in \{0, 1, 2, 3\}$. The cases $r\in \{0,1\}$ are not possible by that $QA_n$ acts on Riemann surfaces of genus $g\geq 2$. If $r=2$, then $\Delta$ has signature $(1;-;[m_1, m_2];\{-\})$ with $2\leq m_1, m_2\leq 2^{n-2}$, by Proposition \ref{Notps} this is not possible. If $r=3$, then $\Delta$ has signature $(1;-;[2,2,m_3];\{-\})$, where $m_3\in \{2,\cdots, 2^{n-3}\}$,  and its canonical Fuchsian subgroup $\Delta^+$ has signature $(0;+;[2,2,2,2,m_3, m_3];\{-\})$, so ${\rm Aut}^+(R)=H_1$ acts on $R$ with signature $(0;+;[2,2,2,2,m_3, m_3];\{-\})$. In this case, by \cite[Theorem 9.1]{Br00}
 we have that $m_3=2^{n-2}$\ (contradiction to $m_3\in \{2,\cdots, 2^{n-3}\}$).
 
 For the second part of the statement, first we observe that $x^{2^{n-2}}$ is the unique involution in the center of ${\rm Aut}^+(S)$. Then, by \cite[Theorem 2.7]{ABCNPW} for the normal subgroup $\langle x^{2^{n-2}}\rangle$ of  ${\rm Aut}^+(S)=H_1$ we have that the quotient surface $S/\langle x^{2^{n-2}}\rangle$ has signature $(2^{n-2}-1;2,2,2, 2)$, and not correspond to the signature of a hyperelliptic curve of genus $2^{n-1}-1$.
 \end{proof}

 \begin{prop}\label{Ups0} The action of $QA_{n}$ as a group of conformal/anticonformal automorphisms on pseudo-real Riemann surfaces of genus $\psi^*(QA_n)$ is not topologically unique.
\end{prop}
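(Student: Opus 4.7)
The plan is to exhibit two epimorphisms $\theta_1,\theta_2:\Delta\to QA_n$ onto the NEC group $\Delta$ with signature $(1;-;[2,2,2^{n-2}];\{-\})$ dictated by the proof of Theorem \ref{mps+0}, and then separate them by a simple invariant. Write
\[
\Delta=\langle d_1,\beta_1,\beta_2,\beta_3 \mid \beta_1^2=\beta_2^2=\beta_3^{2^{n-2}}=\beta_1\beta_2\beta_3 d_1^2=1\rangle.
\]
Take $\theta_1$ to be the epimorphism produced by Theorem \ref{tps0} with $l=2$, $r=1$, namely $\theta_1(d_1)=yx$, $\theta_1(\beta_1)=\theta_1(\beta_2)=y$, $\theta_1(\beta_3)=x^{2^{n-2}-2}$, and take $\theta_2$ to be the new epimorphism given by $\theta_2(d_1)=x$, $\theta_2(\beta_1)=y$, $\theta_2(\beta_2)=x^{2^{n-2}}$, $\theta_2(\beta_3)=yx^{2^{n-2}-2}$. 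Using the relation $yx^k=x^{(2^{n-2}+1)k}y$ from Section \ref{Section 1}, a direct calculation verifies that $\theta_2$ respects the presentation relation of $\Delta$, that $\theta_2(\Delta^+)=H_1$, that each $\theta_2(\beta_i)$ has exactly the order of $\beta_i$ (so the kernel is torsion-free), and that the image contains $x$ and $y$ (so $\theta_2$ is surjective). The same Singerman-list argument as in Theorem \ref{tps0} applied to the canonical Fuchsian signature $(0;+;[2,2,2,2,2^{n-2},2^{n-2}];\{-\})$ produces, for a generic $\Delta$, a pseudo-real Riemann surface $S_i$ of genus $2^{n-1}-1$ with ${\rm Aut}(S_i)=QA_n$ and ${\rm Aut}^+(S_i)=H_1$ for each $i\in\{1,2\}$; pseudo-reality is automatic because every element of $QA_n\setminus H_1$ has order $2^{n-1}\neq 2$.

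The invariant that distinguishes the two actions is the coset class $[\theta(\beta_3)]\in H_1/\langle x^2\rangle\cong C_2$. Directly, $\theta_1(\beta_3)=x^{2^{n-2}-2}\in \langle x^2\rangle$ is the trivial class, while $\theta_2(\beta_3)=yx^{2^{n-2}-2}\in y\langle x^2\rangle$ is the non-trivial class. To see this is a well-defined invariant under the action of ${\rm Aut}(\Delta)\times {\rm Aut}(QA_n)$, first note that $\langle x^2\rangle=Z(QA_n)$ is characteristic: scanning the description of the automorphisms $\psi_{u,a,v}$ in Section \ref{Section 1}, one sees that $\psi_{u,a,v}(x)^2\in\langle x^2\rangle$ and $\psi_{u,a,v}(y)=yx^v$ with $v\in\{0,2^{n-2}\}\subset\langle x^2\rangle$; hence ${\rm Aut}(QA_n)$ permutes the two cosets of $\langle x^2\rangle$ in $H_1$ trivially, and post-composition preserves the class. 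Second, since $2^{n-2}$ occurs with multiplicity one in the proper periods of $\Delta$, any $\phi\in{\rm Aut}(\Delta)$ sends $\beta_3$ to $\alpha\beta_3^k\alpha^{-1}$ for some $\alpha\in\Delta$ and some $k$ coprime to $2^{n-2}$ (hence odd); since $\langle x^2\rangle$ is central in $QA_n$ and $H_1/\langle x^2\rangle$ has exponent $2$, both $QA_n$-conjugation and raising to the odd power $k$ preserve the coset class, so pre-composition preserves it as well.

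The main obstacle I expect is the claim about ${\rm Aut}(\Delta)$: justifying that the uniqueness of the proper period $2^{n-2}$ in the signature of $\Delta$ genuinely forces any automorphism of $\Delta$ to act on the $\Delta$-conjugacy class of $\beta_3$ only through inner automorphisms combined with raising to a unit power modulo $2^{n-2}$. With this established, the class $[\theta(\beta_3)]\in H_1/\langle x^2\rangle$ is a genuine topological invariant, and $[\theta_1(\beta_3)]\neq [\theta_2(\beta_3)]$ shows that $\theta_1$ and $\theta_2$ produce topologically inequivalent conformal/anticonformal actions of $QA_n$ at the minimal genus $\psi^*(QA_n)$, establishing the proposition.
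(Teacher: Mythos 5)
Your proposal is correct, and while it uses the same raw materials as the paper (the same NEC signature $(1;-;[2,2,2^{n-2}];\{-\})$ and in fact the same two epimorphisms: your $\theta_1,\theta_2$ are the paper's $\theta_1,\theta_3$), the logical strategy is genuinely different. The paper classifies all surface-kernel epimorphisms $\theta:\Delta\to QA_n$ with $\theta(\Delta^+)=H_1$ up to ${\rm Aut}(QA_n)$, obtains three normal forms, identifies two of them via an explicit automorphism of $\Delta$, and concludes that exactly two classes remain; the inequivalence of those two remaining classes is left implicit. You instead exhibit the two epimorphisms directly and separate them by the invariant $[\theta(\beta_3)]\in H_1/\langle x^2\rangle\cong C_2$, which is exactly the step the paper does not spell out. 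Your worry about ${\rm Aut}(\Delta)$ is resolved by a standard fact: since the signature of $\Delta$ has no period cycles, every finite cyclic subgroup of $\Delta$ is conjugate into one of the $\langle\beta_i\rangle$, the maximal finite cyclic subgroups are precisely the conjugates of the $\langle\beta_i\rangle$, and an automorphism must send $\langle\beta_3\rangle$ (the only one of order $2^{n-2}\geq 4$) to a conjugate of itself, so $\phi(\beta_3)=\alpha\beta_3^{k}\alpha^{-1}$ with $k$ odd, as you claim (this is contained in the structure theory of NEC groups in the Bujalance--Etayo--Gamboa--Gromadzki monograph cited in the paper). One small correction: the reason conjugation acts trivially on the cosets of $\langle x^2\rangle$ is not the centrality of $\langle x^2\rangle$ per se, but that the commutator subgroup $[QA_n,QA_n]=\langle x^{2^{n-2}}\rangle$ lies in $\langle x^2\rangle$, i.e.\ $QA_n/\langle x^2\rangle\cong C_2^2$ is abelian; both facts hold, so your argument stands. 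In sum, your route proves exactly what the proposition needs (two explicit, provably inequivalent actions at genus $\psi^*(QA_n)$) and makes the inequivalence rigorous, whereas the paper's route gives the stronger (but less explicitly justified) statement that there are precisely two topological classes at that genus.
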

\begin{proof} By Theorem \ref{mps+0}, we know that there exists a pseudo-real Riemann surface $S$ of genus $\psi^*(QA_n)$ such that the quotient orbifold $S/QA_{n}=\h/\Delta$ has signature of the form $(1;-;[2, 2, 2^{n-2}];\{-\})$, where $\Delta$ is an NEC group with presentation $\Delta=\langle d_1,\beta_1, \beta_2,\beta_3:\ \beta_1^2=\beta_2^2=\beta_3^{2^{n-2}}=1, \beta_1\beta_2\beta_3d_1^2=1\rangle$ and $\Delta^+$ its canonical Fuchsian subgroup with presentation 
$\Delta^{+}=\langle d_1^2,\beta_1,\beta_2, \beta_3, d_1\beta_1d_1^{-1}, d_1\beta_2d_1^{-1}, d_1\beta_3d_1^{-1}\rangle,$ and there is an epimorphism $\theta:\Delta\to QA_n$ with kernel a torsion-free Fuchsian group uniformizing $S$.  
The condition for $\theta$ to have torsion-free kernel ensures that $\theta(\beta_{1})$ and $\theta(\beta_2)$ are order two elements of $QA_{n}$ and $\theta(\beta_{3})$ is an order $2^{n-2}$ element of $QA_{n}$, that is,
$\theta(\beta_{1}), \theta(\beta_{2}) \in \lbrace y, yx^{2^{n-2}}, x^{2^{n-2}}\}$ and $\theta(\beta_{3}) \in \lbrace y^kx^{j}, \, {\rm gcd}(j,2^{n-1})=2 , k\in\{0,1\}\rbrace$.
As $\theta$ is surjective and $\theta(\Delta^{+})=H_1$, we must also have $\theta(d_{1})=y^tx^{m}$, where $m$ is odd and $t\in\{0,1\}$.
As $\theta$ is homomorphism, we must also have the relation $\ \theta(\beta_{1})\theta(\beta_{2})\theta(\beta_{3})\theta(d_{1})^{2}=1.$

By post-composing $\theta$ by automorphisms of $QA_{n}$ of the form $\psi_{u,2,0}$ and $\psi_{1,2,v}$, we obtain that all  of these possibilities for $\theta$ are ${\rm Aut}(QA_n)$-equivalent to
\begin{center}$\theta_1:\Delta\to QA_{n}:\ \theta_1(d_1)=yx;\ \theta_1(\beta_1)=y;\ \theta_1(\beta_2)=y; \theta_{1}(\beta_3)=x^{2^{n-2}-2}.$\end{center}
\begin{center}$\theta_2:\Delta\to QA_{n}:\ \theta_2(d_1)=x;\ \theta_2(\beta_1)=y;\ \theta_2(\beta_2)=yx^{2^{n-2}};\ \theta_{2}(\beta_3)=x^{2^{n-2}-2}.$\end{center}
\begin{center}$\theta_3:\Delta\to QA_{n}:\ \theta_3(d_1)=x;\ \theta_3(\beta_1)=y;\ \theta_3(\beta_2)=x^{2^{n-2}};\ \theta_{3}(\beta_3)=yx^{2^{n-2}-2}.$\end{center}

On the other hand, considering the automorphism $Q$ of $\Delta$ given by $$Q(d_{1})=\beta_2d_1; \, Q(\beta_1)=d_1^{-1}\beta_2d_1; \ Q(\beta_2)=\beta_{1};\ Q(\beta_3)=\beta_2\beta_3\beta_2,$$ we note that $\widehat{\theta}= \psi_{1,2,v}\circ \theta_1\circ Q:\Delta\to QA_{n}$ is given by 
$ \widehat{\theta}=(d_1)=x;\ \widehat{\theta}(\beta_1)=y;\ \widehat{\theta}(\beta_2)=yx^{2^{n-2}};\ \widehat{\theta}(\beta_3)=x^{2^{n-2}-2},$
 which corresponds to $\theta_2$. Therefore, $\theta_1$ is equivalent to $\theta_2$.  
 
 Accordingly, we find there are just two possibilities $\theta_1$ and $\theta_3$ for the epimorphism $\theta$.
\end{proof}

%%%%%%%%

\begin{theo}\label{psw} Let $n\geq 4$. Then $\psi(QA_n)=\psi^*(QA_n)$.
\end{theo}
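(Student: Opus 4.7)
The plan is to prove $\psi(QA_n)=\psi^*(QA_n)=2^{n-1}-1$ by matching inequalities. The upper bound $\psi(QA_n)\leq \psi^*(QA_n)$ is immediate from the definitions, and together with Theorem \ref{mps+0} this gives $\psi(QA_n)\leq 2^{n-1}-1$. For the matching lower bound I argue by contradiction: I suppose $S$ is a pseudo-real Riemann surface with $QA_n\leq {\rm Aut}(S)$ and genus $g<2^{n-1}-1$, and derive a contradiction.

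The argument splits according to whether $QA_n$ contains anticonformal elements. If $QA_n\not\subseteq {\rm Aut}^+(S)$, then $H := QA_n \cap {\rm Aut}^+(S)$ is an index-two subgroup of $QA_n$; since $S$ is pseudo-real, $H$ must contain all three involutions of $QA_n$. By the classification of index-two subgroups in Section \ref{Section 1}(c) we conclude $H = H_1$, so Theorem \ref{mps+0} applies and forces $g \geq \psi^*(QA_n) = 2^{n-1}-1$, contradicting the hypothesis.

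It remains to treat the case $QA_n\subseteq {\rm Aut}^+(S)$. Write $G={\rm Aut}(S)$ and $G^+={\rm Aut}^+(S)$, so $QA_n\leq G^+<G$ and $|G|\geq 2^{n+1}$. Let $\Delta$ be an NEC group with $S/G=\h/\Delta$ and $\theta:\Delta\to G$ the associated epimorphism with torsion-free kernel. Since $S$ is pseudo-real, $\Delta$ has no reflections (no period cycles), so its signature has the form $(h;-;[m_1,\dots,m_r];\{-\})$ with $h\geq 1$, and every glide-reflection generator $d_i$ satisfies ${\rm ord}(\theta(d_i))\geq 4$. Riemann-Hurwitz gives $g-1=(|G|/2)|\Delta|^*$, and combining $g<2^{n-1}-1$ with $|G|\geq 2^{n+1}$ yields $|\Delta|^*<1/2-1/2^{n-1}$. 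A direct enumeration of NEC signatures of the required form with positive reduced area strictly less than $1/2$ leaves only the family $(1;-;[m_1,m_2];\{-\})$ with $1/2<1/m_1+1/m_2<1$. For each of these signatures one shows, using the structural data about $QA_n$ collected in Section \ref{Section 1} (the unique central involution $x^{2^{n-2}}$, the abelianization $C_{2^{n-2}}\times C_2$, and the description of ${\rm Aut}(QA_n)$) together with Proposition \ref{Notps}, that no compatible epimorphism $\theta:\Delta\to G$ with $QA_n\leq \theta(\Delta^+)$ and ${\rm ord}(\theta(d_1))\geq 4$ can exist, which closes the argument.

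The main obstacle is the case-by-case enumeration in this second case, where one must simultaneously handle every supergroup $G$ of $QA_n$ whose index-two conformal subgroup contains $QA_n$ and whose anticonformal coset is involution-free. The analysis follows the template set by Theorem \ref{mps+0} and Proposition \ref{Notps}, relying on Singerman-type maximality of NEC groups and on divisibility constraints imposed by the requirement $QA_n\leq G^+$ (together with the fact that orders of torsion elements in $QA_n$ are restricted to $2$, $2^{n-2}$ and divisors of $2^{n-1}$) to eliminate the finitely many remaining candidate signatures.
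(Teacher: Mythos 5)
Your reduction is fine as far as it goes: Case 1 follows at once from the definition of $\psi^{*}$ together with Theorem \ref{mps+0}, and in Case 2 your area estimate correctly leaves only the signatures $(1;-;[m_1,m_2];\{-\})$ with $\tfrac12<\tfrac{1}{m_1}+\tfrac{1}{m_2}<1$, i.e. $(2,k)$ with $k\geq 3$ and $(3,3),(3,4),(3,5)$ --- exactly the signatures the paper ends up confronting. But the decisive step is the one you leave as an assertion. You must exclude an epimorphism $\theta:\Delta\to G$ for \emph{every} possible supergroup $G={\rm Aut}(S)$ with $QA_n\leq G^{+}=\theta(\Delta^{+})$, and Proposition \ref{Notps} cannot do this: it only rules out surjections onto $QA_n$ itself (a faithful essential action of $QA_n$), whereas here the target is an unknown larger group whose orientation-reversing elements need not be related to $QA_n$ at all. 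The appeals to ``Singerman-type maximality'' and to unspecified ``divisibility constraints'' do not address this either, so the core of the theorem is missing from your sketch.

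For comparison, the paper closes precisely this gap with concrete group-theoretic arguments. Writing $L$ for the larger group, the bound $|L|\leq 12(l-1)$ for groups acting essentially on pseudo-real surfaces, together with the evenness of $[L:QA_n]$ (forced by $QA_n\leq L^{+}$), gives $[L:QA_n]\in\{2,4\}$. In the index-$4$ case the signature must be $(1;-;[2,3];\{-\})$, and the unique index-two subgroup of $\Delta^{+}$ (of signature $(0;+;[2,2,3,3];\{-\})$) is generated by elements of order $3$, so every nontrivial quotient of it has exponent $3$ and hence order divisible by $3$, impossible for the $2$-group $QA_n$. In the index-$2$ case one has $QA_n=L^{+}$: for $(1;-;[2,k];\{-\})$ the group $QA_n$ would be generated by two involutions and an element of order $k$, which forces $|QA_n|=2k$, i.e. $k=2^{n-1}$, and then Riemann--Hurwitz returns genus exactly $2^{n-1}-1$ (no smaller genus occurs); for $(1;-;[3,k];\{-\})$ the group $QA_n$ would be a quotient of a group of signature $(0;+;[3,3,k,k];\{-\})$ with $3\leq k\leq 5$, again impossible for a $2$-group. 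Your proposal would be complete only after supplying these eliminations (or equivalent ones) for all admissible $G$; as written, the key idea that makes the lower bound work is absent.
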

\begin{proof}
Suppose to the contrary that $\psi(QA_n)<\psi^*(QA_n)$. Then $QA_n$ must be a subgroup of some larger group $L$ with $\psi^*(L)<\psi^*(QA_n)$. Let $l=\psi^*(L)$, and let $g=\psi^*(QA_n)$, which $$g=2^{n-1}-1=\frac{|QA_n|}{2}-1.$$ Note that $|QA_n|=2(g+1)>2g.$ Also, $l<g$, and so $QA_n$ cannot act essentially on a pseudo-real Riemann surface of genus $l$, and therefore $QA_n$ must be a subgroup of $L^+$, so the index $[L:QA_n]$ is even. Next, we have
$$2g[L:QA_n]<|QA_n|[L:QA_n]=|L|\leq 12(l-1)<12l=12\psi^*(L)<12\psi^*(QA_n)=12g,$$ which implies that $[L:QA_n]<6$, and so $[L:QA_n]=2$ or 4.

Now suppose that $[L:QA_n]=4$. Then $|L|=4|QA_n|>8g>8l,$ which implies that $|L|=12(l-1)$, since 8 is the second largest possible value of the ratio $R=\frac{|L|}{l-1}$. In particular $|L|=4|QA_n|$ is divisible by 4.
 
 On the other hand, because $|L|=12(l-1)$ we know that $L$ acts with signature of the form  $(1;-;[2,3];\{-\})$, and so $L$ is a quotient of some NEC group $\Gamma$ with presentation $\langle d,\beta:\ \beta^2=(d^2\beta)^3=1\rangle$.  Then since $QA_n$ is a subgroup of index 2 in $L^+$, also $QA_n$ is a quotient of an index 2 subgroup of the subgroup $\Gamma^+=\langle d^2, \beta, d^{-1}\beta d\rangle$. But there is only one subgroup $J$ of index 2 in $\Gamma^+$ generated by the elements $d^2\beta, d\beta d(=d^2(d^{-1}\beta d))$ and $\beta d^2$, all of which have order 3 (see, \cite{CL}). In particular, every non-trivial quotient of $J$ has exponent 3, and hence the order of its quotient $QA_n$ is divisible by 3 (a contradiction). 
 
 Now suppose that $[L:QA_n]=2$, this implies that $QA_n=L^+$, and $|L|=2|QA_n|$. Then
 $$|L|=2|QA_n|=4(g+1)>4g>4l>4(l-1),$$ and it follows that $L$ acts with signature $(1;-;[2,k];\{-\})$ for some $k\geq 3$, or $(1;-;[3,k];\{-\})$ where $3\leq k\leq 5$.
 
 If $L$ acts with signature $(1;-;[2,k];\{-\})$, then $|L|=\frac{4k}{k-2}(l-1)$, and $QA_n=L^+$ is a quotient of the index 2 subgroup $\Delta^+=\langle d^2, \beta, d^{-1}\beta d\rangle$ in some NEC group $\Delta=\langle d, \beta:\ \beta^2=(d^2\beta)^k=1\rangle$. Since $QA_n$ is generated by the images of the elements $\beta$, $d^2\beta$ and $d^{-1}\beta d$, which have orders 2, $k$ and 2, we see that $|QA_n|=2k$. But then
 $$4k=2|QA_n|=|L|=\frac{4k}{k-2}(l-1),$$ so $k=l+1,$ which gives $2(g+1)=|QA_n|=2k=2l+2$, so $g=l$ (a contradiction).
 
 On the other hand, if $L$ acts with signature $(1;-;[3, k];\{-\})$, then $|L|=\frac{6k}{2k-3}(l-1)$, and $QA_n=L^+$ is a quotient of the index 2 subgroup $\Delta^+=\langle d^2,\beta, d^{-1}\beta d\rangle$ with signature $(0;+;[3,3,k, k];\{-\})$, in an NEC group $\Delta=\langle d, \beta:\ \beta^3=(d^2\beta)^k=1\rangle$, where $3\leq k\leq 5$, but the signature $(0;+;[3,3,k, k];\{-\})$ is not admissible for the action of the group $QA_n$ (this is generated by elements of order 2 and $2^{n-1}$).   
 \end{proof}
 
 In the following result, we observe that there are not exists pseudo-real surface of even genus such that  ${\rm Aut}(S)=QA_n$ and ${\rm Aut}^+(S)=H_1$.
 
 \begin{prop}\label{QAgo} If $QA_n$ acts on a pseudo-real surface of genus $g\geq 2$, then $g$ is odd. 
 \end{prop}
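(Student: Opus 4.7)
My plan is to split into two cases based on whether $QA_n$ admits anticonformal automorphisms in its action on $S$, and then to extract the parity of $g - 1$ from the Riemann--Hurwitz formula in each case.

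In Case 1 ($QA_n$ contains anticonformal elements), I first pin down $QA_n \cap {\rm Aut}^+(S)$. Since $S$ is pseudo-real, the three involutions of $QA_n$ (namely $y$, $yx^{2^{n-2}}$ and $x^{2^{n-2}}$, by property (c) of Section \ref{Section 1}) must all be conformal; among the three index-two subgroups in property (b), only $H_1 = \langle x^2, y\rangle \cong C_{2^{n-2}} \times C_2$ contains all of them (the cyclic $H_2$, $H_3$ each have a unique involution). So $QA_n \cap {\rm Aut}^+(S) = H_1$, and every element of $QA_n \setminus H_1$ has order $2^{n-1} \geq 8$ and hence cannot act as a reflection. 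Therefore the NEC group $\Delta$ with $S/QA_n = \h/\Delta$ has no reflections and no period cycles, giving signature $(\gamma; -; [m_1, \ldots, m_r]; \{-\})$ with $\gamma \geq 1$. Each proper period $m_i$ divides the exponent $2^{n-2}$ of $H_1$, so $m_i = 2^{k_i}$ with $1 \leq k_i \leq n - 2$. Riemann--Hurwitz then yields
\[
g - 1 = 2^{n-1}(\gamma - 2) + \sum_{i=1}^{r} 2^{n-1-k_i}\bigl(2^{k_i} - 1\bigr),
\]
in which every summand is even (using $n \geq 4$ and $n - 1 - k_i \geq 1$), so $g$ is odd.

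In Case 2 ($QA_n \leq {\rm Aut}^+(S)$), the quotient $S/QA_n$ is an orientable orbifold with signature $(h; 2^{k_1}, \ldots, 2^{k_r})$ and $k_i \in \{1, \ldots, n-1\}$, so Riemann--Hurwitz gives $g - 1 \equiv r_{n-1} \pmod 2$, where $r_{n-1}$ is the number of cone points of order $2^{n-1}$. I would then fix an anticonformal $\sigma \in {\rm Aut}(S)$ normalizing $QA_n$, which induces an anticonformal $\bar\sigma$ on $S/QA_n$ preserving cone orders. The key claim is that every $\bar\sigma$-orbit on cone points has even cardinality: if some orbit had odd size $s$, then the anticonformal $\bar\sigma^s$ would fix a cone point, and lifting to $\h$ would produce an anticonformal isometry of $\h$ with a fixed point, necessarily a reflection, hence an anticonformal involution of $S$, contradicting pseudo-reality. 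Hence $r_{n-1}$ is even and $g$ is odd.

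The main obstacle I anticipate is verifying the existence of an anticonformal $\sigma$ normalizing $QA_n$ in Case 2 when $QA_n$ is not normal in ${\rm Aut}(S)$. I expect this can be handled either by replacing $QA_n$ with its normal closure in ${\rm Aut}(S)$, or by arguing the parity directly from the NEC signature of ${\rm Aut}(S)$ and exploiting the fact that the reflection-free pseudo-real signature pairs up the periods of highest $2$-adic valuation.
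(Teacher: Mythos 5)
Your Case 1 is, in substance, the paper's own proof: the proposition is intended (see the sentence that precedes it) for pseudo-real surfaces on which $QA_n$ acts with anticonformal elements, so the paper assumes the conformal part of the action is $H_1$, notes that the quotient is uniformized by an NEC group of signature $(\gamma;-;[m_1,\dots,m_r];\{-\})$ with every proper period a power of $2$ dividing $2^{n-2}$ and no period cycles, and applies Riemann--Hurwitz to get $g=1+2^{n-1}|\Delta|^{*}=1+2A$ with $A\in\mathbb{N}$, hence $g$ odd --- which is exactly your computation $g-1=2^{n-1}(\gamma-2)+\sum_i 2^{\,n-1-k_i}(2^{k_i}-1)$. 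So for the claim the paper actually proves, your argument is correct and follows the same route.

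Your Case 2 (purely conformal $QA_n$ on a pseudo-real surface) goes beyond what the paper treats in this proposition, and there the gap you flag yourself is genuine and not repaired: without an anticonformal automorphism of $S$ normalizing $QA_n$, the map $\bar\sigma$ on $S/QA_n$ is not even defined, so you have no control on the parity of the number of cone points of order $2^{n-1}$, which is precisely what Riemann--Hurwitz reduces the statement to. Neither fallback works as stated: replacing $QA_n$ by its normal closure in ${\rm Aut}(S)$ changes the quotient orbifold, while the parity you need concerns the cone points of $S/QA_n$ itself; and arguing ``directly from the NEC signature of ${\rm Aut}(S)$'' would require knowing how the conjugacy classes of order-$2^{n-1}$ stabilizers in the big group split into $QA_n$-classes, which is the unproved point. (When $\sigma$ does normalize $QA_n$, your lifting step is sound: an orientation-reversing isometry of $\h$ with a fixed point is a reflection, and composing a lift of $\bar\sigma^{s}$ with a suitable deck transformation over $S/QA_n$ yields a reflection in the NEC group of ${\rm Aut}(S)$, i.e.\ an anticonformal involution of $S$, contradicting pseudo-reality.) So Case 1 alone matches the paper; to establish the literal, unrestricted statement you would have to close this normalization gap or restrict the hypothesis as the paper implicitly does.
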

\begin{proof} Let $n\geq 4$. Suppose that $QA_n$ acts on a pseudo-real surface $S$ of genus $g\geq 2$ such that
 ${\rm Aut}^+(S)=H_1$. As the orders of the cyclic subgroups of $H_1$ are divisors of $2^{n-2}$ and $2$, there is an NEC group $\Delta$ of signature $(\gamma;-;[2,\stackrel{l_2}{\cdots},2, 4,\stackrel{l_4}{\cdots},4,\cdots,2^{n-3},\stackrel{l_n}{\cdots},2^{n-3},2^{n-2},\stackrel{l_{2n}}{\cdots},2^{n-2}];\{-\})$ with $\gamma>0$ such that $\mathbb{H}/\Delta=S/QA_n$, and an epimorphism $\theta:\Delta\to QA_n$ with kernel a torsion-free Fuchsian group uniformizing $S$. Then, by the Riemann-Hurwitz formula we obtain $g\!=\!1+\frac{|QA_n|}{2}|\Delta|^*=1+2^{n-1}|\Delta|^*,$ where $|\Delta|^*=\frac{A}{2^{n-2}}\geq \frac{2^{n-2}-1}{2^{n-2}}$, with $A\in \mathbb{N}$. %\ (contradiction to $g$ even).
\end{proof}
 
 In \cite[Theorem 3.5]{BuCC20}, its was observe that for every odd integer $g\geq 3$, the largest order of an orientation-reversing automorphism of a pseudo-real surface of genus $g$ is $2g-2$. We have the following result.
 
 \begin{prop}\label{psLQn}
If the quasi-abelian group $QA_{n}$ has a faithful essential action on a pseudo-real surface $S$ of genus $g$ such that $|QA_{n}|> 2(g - 1)$, then $QA_{n}$ must act on $S$ with signature $(1; -; [2, 2, 2^{n-2}]; \lbrace -\rbrace)$. 
\end{prop}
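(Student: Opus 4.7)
The plan is to follow the NEC-group analysis used in the proof of Theorem \ref{mps+0}. Because $S$ is pseudo-real and $H_1=\langle x^2,y\rangle$ is the unique index two subgroup of $QA_n$ containing all three involutions of $QA_n$, one has $QA_n\cap {\rm Aut}^+(S)=H_1$. Hence there is a proper NEC group $\Delta$ with signature $(\gamma;-;[m_1,\ldots,m_r];\{-\})$, $\gamma\geq 1$, and an epimorphism $\theta:\Delta\to QA_n$ with torsion-free kernel uniformizing $S$ and $\theta(\Delta^+)=H_1$. Since each elliptic generator $\beta_i$ lies in $\Delta^+$, its image lies in $H_1\cong C_{2^{n-2}}\times C_2$, so every proper period $m_i$ divides $2^{n-2}$. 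Riemann--Hurwitz gives $g-1=2^{n-1}|\Delta|^*$, and the hypothesis $|QA_n|>2(g-1)$ translates into $|\Delta|^*<1$, while $g\geq 2$ forces $|\Delta|^*>0$.

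I would then enumerate the pairs $(\gamma,r)$ compatible with $0<|\Delta|^*<1$. For $\gamma\geq 3$ the reduced area is already at least $1$; for $\gamma=2$ only $r=1$ survives (signature $(2;-;[k];\{-\})$), which is excluded by Proposition \ref{Notps}; for $\gamma=1$ one needs $1<\sum_{i=1}^r (1-1/m_i)<2$. The case $r=2$ yields signature $(1;-;[j,k];\{-\})$, again excluded by Proposition \ref{Notps}, and $r\geq 4$ forces $|\Delta|^*\geq 1$ because each $1-1/m_i\geq 1/2$. So $r=3$; using that all $m_i$ are powers of $2$ at least $2$, the inequality $1/m_1+1/m_2+1/m_3>1$ forces two of the periods to equal $2$. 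Thus the signature is reduced to $(1;-;[2,2,m];\{-\})$ with $m\mid 2^{n-2}$.

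To conclude $m=2^{n-2}$, I would pass to the canonical Fuchsian subgroup $\Delta^+$, which has signature $(0;+;[2,2,2,2,m,m];\{-\})$ and for which $\theta|_{\Delta^+}:\Delta^+\to H_1$ is a surjection with torsion-free kernel. The six elliptic generators of $\Delta^+$ have orders in $\{2,m\}$, so the subgroup they generate has exponent $m$. Surjectivity onto $H_1\cong C_{2^{n-2}}\times C_2$, which has exponent $2^{n-2}$, therefore forces $m=2^{n-2}$. This is exactly the argument invoked via \cite[Theorem 9.1]{Br00} in the proof of Theorem \ref{mps+0}.

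The main obstacle is this last step: the Riemann--Hurwitz inequality alone leaves $m\in\{2,4,\ldots,2^{n-2}\}$ open, and one must use the algebraic constraint that the images of the elliptic generators of $\Delta^+$ actually generate $H_1$. All earlier case eliminations are routine once Proposition \ref{Notps} is in hand; the delicate point is matching the exponent of $H_1$ with the admissible period $m$ so that only $m=2^{n-2}$ survives.
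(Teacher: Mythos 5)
Your proof is correct and follows the same skeleton as the paper's: translate $|QA_n|>2(g-1)$ into $|\Delta|^*<1$ via Riemann--Hurwitz, enumerate the few admissible non-orientable signatures, and discard $(1;-;[j,k];\{-\})$ and $(2;-;[k];\{-\})$ using Proposition \ref{Notps}. The one place you genuinely diverge is the last step: the paper passes from the inequality $\frac{1}{j}+\frac{1}{k}>\frac{1}{2}$ directly to ``thereby $j=2$, $k=2^{n-2}$'' (the analogous step in Theorem \ref{mps+0} being delegated to \cite[Theorem 9.1]{Br00}), whereas you justify $m=2^{n-2}$ by passing to the canonical Fuchsian subgroup of signature $(0;+;[2,2,2,2,m,m];\{-\})$, observing that it is generated by its elliptic generators (genus zero), and comparing the exponent $2^{n-2}$ of the abelian group $H_1$ with the bound $m$ on the orders of the generators' images. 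This elementary exponent argument is sound --- it rests on $\theta(\Delta^{+})=H_1$, which you correctly extract from pseudo-reality and the fact that $H_1$ is the only index two subgroup containing all three involutions --- and it makes explicit a step the paper leaves compressed. An even shorter variant: any $\theta(d_1)\in QA_n\setminus H_1$ has $\theta(d_1)^2$ of order exactly $2^{n-2}$, while the long relation expresses $\theta(d_1)^{-2}$ as a product of three elements of the abelian group $H_1$ of orders $2,2,m$, whose order divides ${\rm lcm}(2,2,m)=m$; hence $m=2^{n-2}$ without leaving $\Delta$.
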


\begin{proof}
Suppose that $|QA_{n}|> 2(g - 1)$ (so that $\frac{2}{R} < 1$). Then the signature of $S/QA_n$ must be either $(1;-;[j,k];\lbrace- \rbrace)$ where $2 \leqslant j \leqslant k$, or $(2;-;[k];\lbrace- \rbrace)$ for some $k \geqslant 2$, or $(1; -; [2, j, k]; \lbrace - \rbrace)$ where $2 \leqslant j \leqslant k$, but Proposition \ref{Notps} eliminates the first two of these possibilities, while in the third case $1\!>\!\frac{2}{R}\!=\!\frac{3}{2}\!-\!\frac{1}{j}\!-\!\frac{1}{k}$, so $\frac{1}{j}\!+\!\frac{1}{k}\!>\!\frac{1}{2}$, thereby $j=2$, $k=2^{n-2}$.
\end{proof}

\begin{theo}\label{bpsQAn}
The largest order of a quasi-abelian group $QA_n$ of automorphisms acting essentially on a pseudo-real surface of odd genus $g$ is $M_{QA_n}(g)$ equal to  $2g+2$.
\end{theo}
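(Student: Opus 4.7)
The plan is to combine Proposition \ref{psLQn} with a direct Riemann--Hurwitz computation for the single signature that remains, and then invoke Theorem \ref{mps+0} for the sharpness of the bound.

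First I would argue the upper bound $|QA_n|\leq 2g+2$. Suppose $QA_n$ acts essentially on a pseudo-real surface $S$ of odd genus $g\geq 2$. If $|QA_n|\leq 2(g-1)$, then trivially $|QA_n|<2g+2$ and we are done. Otherwise $|QA_n|>2(g-1)$, and Proposition \ref{psLQn} forces the action to have signature $(1;-;[2,2,2^{n-2}];\{-\})$. For such an NEC group $\Delta$ (with $\eta=1$, $h=1$, $k=0$, proper periods $2,2,2^{n-2}$) the reduced area is
\[
|\Delta|^*\;=\;1-2+\bigl(1-\tfrac{1}{2}\bigr)+\bigl(1-\tfrac{1}{2}\bigr)+\bigl(1-\tfrac{1}{2^{n-2}}\bigr)\;=\;1-\tfrac{1}{2^{n-2}}.
\]
Then the Riemann--Hurwitz formula gives
\[
g\;=\;1+\tfrac{|QA_n|}{2}\,|\Delta|^*\;=\;1+2^{n-1}\bigl(1-2^{-(n-2)}\bigr)\;=\;2^{n-1}-1,
\]
so $|QA_n|=2^n=2g+2$, finishing the upper bound.

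For the sharpness I would cite Theorem \ref{mps+0}: it produces, for every $n\geq 4$, a pseudo-real Riemann surface $S$ of odd genus $g=2^{n-1}-1$ with ${\rm Aut}(S)=QA_n$ (and $S/QA_n$ of signature $(1;-;[2,2,2^{n-2}];\{-\})$), so that $|QA_n|=2^n=2g+2$ is actually attained. Combined with the upper bound, this shows $M_{QA_n}(g)=2g+2$.

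The only step that requires any care is verifying that the arithmetic branch of Proposition \ref{psLQn} really covers every action realizing $|QA_n|>2(g-1)$; the computation of $|\Delta|^*$ and Riemann--Hurwitz are then routine. No additional obstacle is expected, since the achievability comes for free from a theorem already established earlier in the paper.
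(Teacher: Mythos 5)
Your proposal is correct and follows essentially the same route as the paper: the upper bound via Proposition \ref{psLQn} (forcing the signature $(1;-;[2,2,2^{n-2}];\{-\})$ when $|QA_n|>2(g-1)$) followed by the Riemann--Hurwitz computation giving $g=2^{n-1}-1$ and hence $|QA_n|=2g+2$, with sharpness supplied by Theorem \ref{mps+0}. The explicit computation of $|\Delta|^*$ is a welcome detail but the argument is the same as the paper's.
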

\begin{proof}
First, as consequence from the Theorem \ref{mps+0}, we have $M_{QA_n}(g)\geq 2(g+1)$ for every odd integer $g$. Next, we show that bound of $2(g+1)$ is sharp.

Let $QA_n$ be a quasi-abelian group of automorphisms of a pseudo-real surface of genus $g$, of order $|QA_n|\geq 2(g+1)$, and acting with signature $(\gamma;-;[m_1,\cdots,m_r];\{-\})$. 

Then since that $|QA_n|\geq 2(g+1)>2(g-1)$, by Proposition \ref{psLQn}, we know that $\gamma=1$ and $r=3$, with $\{m_1,m_2,m_3\}=\{2,2,2^{n-2}\}$. Then substituting into the Riemann-Hurwitz formula gives $g=2^{n-1}-1$. Accordingly, we cannot do better that $|QA_n|=2(g+1)$.
\end{proof}

 \subsection{Conformal actions of $QA_{n}$ on pseudo-real Riemann surfaces}\label{Scaqan}
In this section, we study pseudo-real Riemann surfaces with $QA_{n}$ as its group of conformal automorphisms.

For $n\geq 4$ be an integer, we consider the following non-abelian group of order $2^{n+1}$
\begin{equation}\label{gKn}K_n=\langle a, b:\ a^{8}=1, b^{2^{n-2}}=a^4, bab^{-1}=a^{\alpha}\rangle\end{equation} with $\alpha=-1$ for $n$ even and $\alpha=3$ for $n$ odd. The group $K_n$ contains to the group $\langle b, a^2b^{2^{n-3}}\rangle\cong QA_n$ as an index two subgroup, and the number of involutions of the groups $K_n$ and $QA_n$  is three. 

\begin{theo}\label{psrqan}
Let $n\geq 4$. Then there are pseudo-real Riemann surfaces $S$ of genus $3(2^{n-1}-1)$ for $n\geq 5$ and $17$ for $n=4$, such that ${\rm Aut}(S)=K_n$ and ${\rm Aut}^+(S)=QA_n$. 
\end{theo}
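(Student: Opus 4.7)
Plan. The approach mirrors the construction in the proof of Theorem \ref{tps0}: I would exhibit a proper NEC group $\Delta$ together with a surface-kernel epimorphism $\theta\colon \Delta\to K_n$ satisfying $\theta^{-1}(QA_n)=\Delta^+$, so that $S=\h/\ker\theta$ carries $K_n$ as conformal/anticonformal automorphisms with ${\rm Aut}^+(S)\supseteq QA_n$. Applying Riemann-Hurwitz with $|K_n|=2^{n+1}$ and the target genera forces $|\Delta|^*=\tfrac{3}{2}-\tfrac{1}{2^{n-2}}$ for $n\geq 5$ and $|\Delta|^*=1$ for $n=4$, which I would realize by the signatures $(2;-;[2,2^{n-2}];\{-\})$ and $(2;-;[2,2];\{-\})$ respectively. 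Both signatures lack reflections, as they must, because the three involutions $a^4$, $a^2b^{2^{n-3}}$, $a^6b^{2^{n-3}}$ of $K_n$ all lie in $QA_n$ and so cannot be anticonformal.

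For the epimorphism I would set $\theta(d_1)=a$, $\theta(d_2)=ab$, $\theta(\beta_1)=a^4$, and solve the long relation $\beta_1\beta_2 d_1^2 d_2^2=1$ for $\theta(\beta_2)$. Since $a$ commutes with $b^2$ (because $\alpha^2\equiv 1\pmod 8$ for both $\alpha=-1$ and $\alpha=3$), one obtains $\theta(d_1)^2\theta(d_2)^2=a^{3+\alpha}b^2$ and hence $\theta(\beta_2)=a^cb^{-2}$ for a suitable $c$. A direct computation inside the abelian subgroup $\langle a,b^2\rangle$ shows that this element has order exactly $2^{n-2}$ for $n\geq 5$, but collapses to an involution when $n=4$; this collapse is precisely why the minimal genus jumps from $3(2^{n-1}-1)$ to $17$ at $n=4$. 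In the $n=4$ case I would instead take $\theta(d_2)=a^3b$ and $\theta(\beta_2)=a^6b^2$, which satisfies the (now shorter) long relation. To verify surjectivity, I pass to the abelianization $K_n^{\mathrm{ab}}\cong C_2\times C_{2^{n-2}}$ and observe that $\theta(d_1)$ and $\theta(d_2)$ project to elements differing by a generator of the $C_{2^{n-2}}$-factor; surjectivity lifts to $K_n$ because $K_n$ is a $2$-group. Torsion-freeness of $\ker\theta$ follows from the order checks on $\theta(\beta_i)$ above, and $\theta(\Delta^+)=QA_n$ is forced by $[\Delta\colon\Delta^+]=[K_n\colon QA_n]=2$ together with $\theta(\Delta^+)\subseteq QA_n$.

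Finally, $S$ is pseudo-real since every involution of $K_n$ lies in $QA_n$, so no element of $K_n\setminus QA_n$ is an involution. The main obstacle is verifying ${\rm Aut}(S)=K_n$ rather than a larger group; I would handle this via Singerman's list \cite{S.} of non-maximal NEC signatures, checking that $(2;-;[2,2^{n-2}];\{-\})$ and $(2;-;[2,2];\{-\})$ are either maximal or that the short list of possible containing NEC groups $\Gamma\supset\Delta$ does not lift $\theta$ to a surjection onto a $2$-group strictly containing $K_n$. The case analysis is forced by the rigid $2$-group structure of $K_n$ and its bound $2^{n-1}$ on cyclic subgroup orders.
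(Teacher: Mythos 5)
Your proposal is correct and follows essentially the same route as the paper: the same NEC signatures $(2;-;[2,2^{n-2}];\{-\})$ for $n\geq 5$ and $(2;-;[2,2];\{-\})$ for $n=4$, the same glide-reflection images $a$ and $ab$, a surface-kernel epimorphism onto $K_n$ with $\theta(\Delta^+)=QA_n$, pseudo-reality from all three involutions lying in $QA_n$, and maximality of the action via a Singerman-type non-maximal-signature argument; the only difference is your particular choice of images for the elliptic generators (e.g.\ $\theta(\beta_1)=a^4$ instead of $a^2b^{2^{n-3}}$), which is an equally valid surface-kernel assignment. (One small side remark to fix: the ``collapse'' of $a^cb^{-2}$ to an involution at $n=4$ is not an obstruction but exactly what the period $2$ of the $n=4$ signature requires, and the genus $17$ is smaller than $3(2^{3}-1)=21$, so it does not ``jump'' up.)
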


\begin{proof}
 Let $n\geq 4$ be an integer and let us consider an NEC group $\Delta$ with signature $(2;-;[2,t];\{-\})$ where $t=2^{n-2}$ for $n\not=4$ ($t=2$ for $n=4$) and presentation of the form
 $\Delta=\langle d_1, d_2,\beta_1,\beta_2:\ \beta_1^2=\beta_2^{t}=1, \beta_1\beta_2d_2^2d_1^2=1\rangle.$ Then quotient Klein surface uniformized by $\Delta$ is the orbifold whose underling surface is the connected sum of two real projective planes with 2 conical points. Let us consider the epimorphism $\theta:\Delta\to K_{n}$ given by $$\theta(\beta_1)\!=\!\begin{cases}
 a^2b^{2^{n-3}}, & \text{for $n\not=4$}\\
 b^4,&  \text{for $n=4$}
 \end{cases},\
 \theta(\beta_2)\!=\!\begin{cases}
b^{2^{n-2}-{2^{n-3}}-2},& \text{for $n\not=4$ even}\\
a^2b^6, & \text{for $n=4$}\\
b^{2^{n-1}-{2^{n-3}}-2},& \text{for $n$ odd} \end{cases},\ \theta(d_1)=a,\ \theta(d_2)=ab.
$$ 
  Proceeding in a similar fashion as in the proof of Theorem \ref{tps0}, we may obtain pseudo-real Riemann surfaces $S$ of genus $3(2^{n-1}-1)$ for $n\not=4$ ($17$ for $n=4$) with ${\rm Aut}(S)=K_n$ and ${\rm Aut}^+(S)=QA_{n}$. 
\end{proof}

\begin{remark}
Let $n\geq 5$. Then the action above of $K_n$ with $QA_{n} \leq {\rm Aut}^{+}(S)$ is not unique on pseudo-real Riemann surfaces of genus $3(2^{n-1}-1)$. In fact, there exists an NEC group $\Delta$ with signature $(1;-;[2,2^{n-1}, 2^{n-1}];\{-\})$ and an epimorphism $\theta:\Delta\to K_n$  (whose kernel uniformize a pseudo-real Riemann surface $S$) given by
$\theta(d_1)=a;\ \theta(\beta_1)=a^2b^{2^{n-3}};\ \theta(\beta_2)=b^{2^{n-3}-1};\ \theta(\beta_3)=b.$ 
\end{remark}

As consequence of the Theorem \ref{psrqan}, denoting the order of the largest quasi-abelian group of orientation-preserving automorphisms of a pseudo-real surface of genus $g$ by $M^+_{QA_n}(g)$, we observe $M^+_{QA_n}(g)\geq \frac{2}{3}g+2$, with $n\geq 4$.

%%%%%%%%%%%%%%%%%%%
\subsection{Actions of $K_{n}$ on pseudo-real Riemann surfaces}\label{CFinally}
Let $n\geq 4$ be an integer. First we observe that the group $K_n=\langle a, b\rangle$, with presentation as in (\ref{gKn}), has two index two subgroups. One is 
$QA_n=\langle b,a^{2}b^{2^{n-3}}\rangle$ and the other is $C_{2^{n-2}}\times C_4\cong K_{n}^{+}=\langle b^{-2}, a\rangle$ for $n\not=4$ ($C_{8}\times C_2\cong K_{4}^{+}=\langle a, b^4\rangle$ for $n=4$). Moreover, the groups $K_n$, $QA_n$ and $K_n^+$ have three involutions. 

\begin{lemma}\label{autkn}
The group $K_n$ has an automorphism that inverts each of the two generators $a$ and $b$.
\end{lemma}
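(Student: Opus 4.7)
The plan is to define a map $\phi : K_n \to K_n$ on the generators by $\phi(a) = a^{-1}$ and $\phi(b) = b^{-1}$, and then verify that this assignment is compatible with the three defining relations of $K_n$; by the universal property of the presentation, $\phi$ then extends to a group homomorphism, and since $a^{-1}$ and $b^{-1}$ generate $K_n$ the map is automatically surjective, hence an automorphism (as $K_n$ is finite). Note also that $\phi^{2}$ fixes both generators, so $\phi$ is of order two.

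For the relation check, the first relation $a^{8}=1$ is immediate since $(a^{-1})^{8}=1$. For the second relation $b^{2^{n-2}}=a^{4}$, one just takes inverses on both sides to obtain $(b^{-1})^{2^{n-2}} = a^{-4} = (a^{-1})^{4}$, which is precisely $\phi(b)^{2^{n-2}}=\phi(a)^{4}$. The main point, and the only step that really uses the parity hypothesis on $\alpha$, is the conjugation relation $bab^{-1}=a^{\alpha}$, which under $\phi$ becomes $b^{-1}a^{-1}b = (a^{-1})^{\alpha}=a^{-\alpha}$.

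To verify this last relation, I would first rewrite $bab^{-1}=a^{\alpha}$ in the equivalent form $b^{-1}ab = a^{\alpha^{-1}}$, where $\alpha^{-1}$ is taken modulo $8$ (the order of $a$). When $n$ is even we have $\alpha=-1$, which is self-inverse mod $8$, so $b^{-1}ab=a^{-1}$ and therefore $b^{-1}a^{-1}b = a = a^{-\alpha}$, as required. When $n$ is odd we have $\alpha=3$, and since $3\cdot 3\equiv 1\pmod{8}$ we get $\alpha^{-1}=3$, so $b^{-1}ab=a^{3}$ and hence $b^{-1}a^{-1}b=a^{-3}=a^{-\alpha}$, again as required. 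This handles both cases uniformly once one observes that $\alpha^{2}\equiv 1\pmod{8}$ for both $\alpha\in\{-1,3\}$.

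I do not expect any real obstacle: the argument is just a verification of three relations. The only mildly subtle point is realizing that the conjugation relation should be rewritten with $b^{-1}$ acting by $\alpha^{-1}$, and that the hypothesis on $\alpha$ in the presentation of $K_n$ was precisely engineered so that $\alpha^{-1}\equiv \alpha \pmod 8$, which makes the inversion map respect the conjugation rule.
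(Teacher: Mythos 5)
Your proof is correct and follows essentially the same route as the paper: show that $(a^{-1},b^{-1})$ satisfies the three defining relations of $K_n$, so the assignment extends to an endomorphism which is surjective (hence bijective, $K_n$ being finite). Your explicit use of $\alpha^{2}\equiv 1\pmod 8$ for $\alpha\in\{-1,3\}$ in the conjugation relation is exactly the point the paper's terser one-line computation relies on implicitly.
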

\begin{proof}
The elements $a^{-1}$ and $b^{-1}$ satisfy the same defining relations for $K_n$ as the elements $a$ and $b$, because $a^{-1}$ and $b^{-1}$ have the same orders $8$ and $2^{n-2}$, and $b^{-1}a^{-1}b=(bab^{-1})^{-1}=(a^{\alpha})^{-1}=(a^{-1})^{\alpha}.$ Hence there exists an automorphism of $K_n$ taking $(a, b)$ to $(a^{-1}, b^{-1})$.
\end{proof}

For $L \in \{QA_{n}, K_{n}^{+}\}$, we set $\sigma_{ps}(K_n, L)$ as the minimal genus of a pseudo-real Riemann surface $S$ for which $K_{n}<{\rm Aut}(S)$ and $L=K_{n} \cap {\rm Aut}^{+}(S)$.

\begin{theo}\label{psKn} Let $n\geq 4$. Then
\begin{enumerate}
\item $\sigma_{ps}(K_n, QA_n)$ is equal to $3(2^{n-1}-1)$ for $n\geq 5$ and $17$ for $n=4$.
\item $\sigma_{ps}(K_n, K_n^+)=2^n+1$. 
\end{enumerate}
\end{theo}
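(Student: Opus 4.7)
The plan is to handle Parts (1) and (2) in parallel using the NEC-group uniformization of $S/K_{n}$. In both cases, since $S$ is pseudo-real and $K_{n}\setminus L$ contains no involutions (for $L\in\{QA_{n}, K_{n}^{+}\}$), the quotient $S/K_{n}=\mathbb{H}/\Delta$ is uniformized by a proper NEC group of signature $(\gamma;-;[m_{1},\ldots,m_{r}];\{-\})$ with $\gamma\geq 1$ (no period-cycles, non-orientable underlying surface), together with a smooth epimorphism $\theta:\Delta\to K_{n}$ with torsion-free kernel satisfying $\theta(\Delta^{+})=L$ and $\theta(d_{j})\in K_{n}\setminus L$ for every glide reflection. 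The Riemann--Hurwitz formula gives $g-1=2^{n}|\Delta|^{*}$ with $|\Delta|^{*}=\gamma-2+\sum_{i}(1-1/m_{i})$.

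For Part (1), existence at genus $3(2^{n-1}-1)$ (respectively $17$ for $n=4$) is supplied by Theorem \ref{psrqan}. For minimality, I would assume $|\Delta|^{*}<3/2-4/2^{n}$ and rule out each signature in the resulting finite list. The signatures $(1;-;[j,k];\{-\})$, $(2;-;[k];\{-\})$ and $(3;-;[-];\{-\})$ are eliminated exactly as in Proposition \ref{Notps}: Lemma \ref{autkn} supplies the inverting automorphism of $K_{n}$ needed to apply \cite[Proposition 5.1]{BuCC20}, forcing the action to extend to one with reflections and contradicting pseudo-reality. For the remaining low-area signatures the key observation is that $H_{1}\cong C_{2^{n-2}}\times C_{2}$ is characteristic in $QA_{n}$ (being the unique index-two subgroup containing all three involutions of $QA_{n}$), hence normal in $K_{n}$; whenever every period $m_{i}$ of $\Delta$ divides $2^{n-2}$, all elliptic images lie in $H_{1}$, the long relation forces $\theta(d_{j})^{2}\in H_{1}$, and conjugation by $\theta(d_{j})\in K_{n}\setminus QA_{n}$ preserves $H_{1}$, so $\theta(\Delta^{+})\subseteq H_{1}\subsetneq QA_{n}$, contradicting surjectivity. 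Signatures with some period equal to $2^{n-1}$ are handled by a direct computation: writing $\theta(d_{j})=a^{c_{j}}b^{d_{j}}$ with $d_{j}$ odd one gets $\theta(d_{j})^{2}=a^{c_{j}(1+\alpha^{d_{j}})}b^{2d_{j}}$, and the long relation becomes an equation in $\langle a,b\rangle$ that proves unsolvable modulo $2^{n-1}$. The first admissible signature is therefore $(2;-;[2,2^{n-2}];\{-\})$ for $n\geq 5$ and $(2;-;[2,2];\{-\})$ for $n=4$, realizing exactly the target genus, with the $\theta$ produced in the proof of Theorem \ref{psrqan}.

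For Part (2), I would exhibit a surface of genus $2^{n}+1$ using $\Delta$ of signature $(2;-;[2,2];\{-\})$ (so $|\Delta|^{*}=1$) with presentation $\langle d_{1},d_{2},\beta_{1},\beta_{2}:\beta_{1}^{2}=\beta_{2}^{2}=\beta_{1}\beta_{2}d_{1}^{2}d_{2}^{2}=1\rangle$, setting $\theta(d_{1})=b$ and $\theta(d_{2})=ab^{-1}$. A direct computation (using the centrality of $a^{4}$) yields $\theta(d_{1})^{2}\theta(d_{2})^{2}=1$ for $n$ even (where $\alpha=-1$) and $a^{4}$ for $n$ odd (where $\alpha=3$), so $\theta(\beta_{1})$ and $\theta(\beta_{2})$ can be chosen among the three involutions of $K_{n}^{+}$ so that $\theta(\beta_{1})\theta(\beta_{2})=(\theta(d_{1})^{2}\theta(d_{2})^{2})^{-1}$. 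Surjectivity $\theta(\Delta^{+})=K_{n}^{+}=\langle a,b^{2}\rangle$ follows from $\theta(d_{1}d_{2})=a^{\alpha}$ together with $\theta(d_{1})^{2}=b^{2}$; the kernel is torsion-free since the periods coincide with the orders of the images of elliptic generators; and $S=\mathbb{H}/\ker\theta$ is pseudo-real because all three involutions of $K_{n}$ lie in $K_{n}^{+}$. Minimality follows by enumerating signatures with $|\Delta|^{*}<1$ (namely $\gamma=1$ with $r\leq 3$ and $\gamma=2$ with $r\leq 1$) and showing each is either excluded by Lemma \ref{autkn} or incompatible with $\theta(\Delta^{+})=K_{n}^{+}$; for example, signatures $(1;-;[2,2,m];\{-\})$ with $m\mid 2^{n-2}$ force $\theta(\Delta^{+})$ into a proper subgroup of the abelian group $K_{n}^{+}$ because the three involutions of $K_{n}^{+}$ already span only the $2$-torsion.

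The main obstacle, for both parts, is the exhaustive minimality case analysis. For each low-area signature one must simultaneously enforce torsion-freeness of $\ker\theta$ (which pins down the orders of the elliptic images), the orientation condition $\theta(\Delta^{+})=L$ (which dictates how $\theta(d_{j})$ acts by conjugation on $L$), and surjectivity onto $K_{n}$, then verify incompatibility with the long relation of $\Delta$. The most delicate cases are the signatures $(1;-;[2,2,m];\{-\})$ for small $m$, where the interaction between the three involution-images, the square of the single glide-reflection image, and the centrality properties of $a^{4}$ must be examined separately for $n$ even and $n$ odd because of the differing defining relation $bab^{-1}=a^{\alpha}$ with $\alpha\in\{-1,3\}$.
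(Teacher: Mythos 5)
Your overall route coincides with the paper's: existence is taken from Theorem \ref{psrqan} for part (1) and from an explicit epimorphism of an NEC group of signature $(2;-;[2,2];\{-\})$ for part (2) (your choice $\theta(d_2)=ab^{-1}$ is a harmless variant of the paper's $\theta(d_2)=ab^{t}$, and your adjustment of $\theta(\beta_1),\theta(\beta_2)$ using the centrality of $a^4$ does work), while minimality is argued exactly as in the paper: Riemann--Hurwitz restricts $|\Delta|^{*}$, and low-area signatures are discarded via Lemma \ref{autkn} together with the extension trick of \cite{BuCC20}, the paper deferring the remaining cases to ``similar arguments as in the proof of Theorem \ref{mps+0}''.

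The genuine gap is in the extra mechanisms you introduce to finish part (1). Your characteristic-subgroup argument (all elliptic images and $\theta(d_j)^2$ in $H_1$, hence $\theta(\Delta^{+})\subseteq H_1$) is only valid when there is a single glide-reflection generator: for $\gamma=2$ the group $\Delta^{+}$ also contains $d_1d_2$, whose image need not lie in $H_1$ even when every period divides $2^{n-2}$, so the argument collapses there. But the signatures $(2;-;[2,m];\{-\})$ with $m\in\{2,4,\ldots,2^{n-3}\}$ have $|\Delta|^{*}=\tfrac32-\tfrac1m<\tfrac32-\tfrac{1}{2^{n-2}}$, hence would give genus strictly below $3(2^{n-1}-1)$ and must be excluded; they are also outside the reach of the pair-inversion trick of \cite{BuCC20}, since there $K_n$ is generated by the triple $\theta(d_1),\theta(d_2),\theta(\beta_1)$ rather than by a pair. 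These cases require a separate computation (for instance: if the $b$-exponents of $\theta(d_1),\theta(d_2)\in K_n\setminus QA_n$ have equal parity then $\theta(\Delta^{+})\leq K_n^{+}\neq QA_n$, and if they have opposite parity a congruence modulo $2^{n-2}$ fails for $n\geq 5$), which your sketch does not supply; likewise your dismissal of the $\gamma=1$ signatures containing a period $2^{n-1}$ as ``unsolvable modulo $2^{n-1}$'' is asserted rather than proved. A further small point: pseudo-reality of the constructed surfaces needs more than the observation that all involutions of $K_n$ lie in $K_n^{+}$; one must also rule out anticonformal involutions in the possibly larger group ${\rm Aut}(S)$, which the paper does by the maximality argument of Theorem \ref{tps0}. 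Admittedly the paper's own minimality discussion is itself very terse at these points, but as written your plan would not close them.
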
 

\begin{proof}
({\bf 1}) By Theorem \ref{psrqan}, we may a obtain pseudo-real Riemann surface $S$ of genus $g_0=3(2^{n-1}-1)$ for $n\not=4$ ($g_0=17$ for $n=4$) with ${\rm Aut}(S)=K_n$ and ${\rm Aut}^+(S)=QA_{n}$. 
  
 It remains to show that $g_{0}$ is minimal with that property. By the contrary, assume there is some pseudo-real Riemann surface $R$ of genus $g<g_{0}$ such that $K_n={\rm Aut}(R)$ and  $QA_n={\rm Aut}^{+}(R)$. In this case, there is an NEC group $\Delta$ of signature $(\gamma;-;[m_1,\cdots, m_r];\{-\})$, with $\gamma>0$ and $g=1+\frac{|K_n|}{2}|\Delta|^*$, where $|\Delta|^*=\gamma-2+\sum_{i=1}^r\left(1-\frac{1}{m_i}\right)$, such that ${\mathbb H}/\Delta=R/K_{n}$, and there is an epimorphism $\theta:\Delta \to K_{n}$ with kernel a torsion-free Fuchsian group uniformizing $R$. The condition $g<g_{0}$ asserts that  $|\Delta|^*<\frac{3}{2}-\frac{1}{2^{n-1}}$ for $n\not=4$ ($|\Delta|^*<1$ for $n=4$). Therefore, $\gamma \in \{1, 2,3\}$ for $n\not=4$ ($\gamma \in \{1, 2\}$ for $n=4$).  
 
 \noindent({\bf 2}) Let $n\geq 4$ be an integer. We consider an NEC group $\Delta$ with signature $(2;-;[2,2];\{-\})$ and presentation 
 $\Delta=\langle d_1,d_2,\beta_1,\beta_2:\ \beta_1^2=\beta_2^{2}=1, \beta_1\beta_2d_1^2d_2^2=1\rangle.$ Then quotient Klein surface uniformized by $\Delta$ is the orbifold whose underling surface is the connected sum of two real projective planes with 2 conical points. Let us consider the epimorphism $\theta:\Delta\to K_{n}$ given by
 $\theta(d_1)=b;\ \theta(d_2)=ab^{t};\ \theta(\beta_1)=\theta(\beta_2)=b^{2^{n-2}},$ where $t=2^{n-2}-1$ for $n$ even and $t=2^{n-3}-1$ for $n$ odd.
 
 Proceeding in a similar fashion as in the proof of Theorem \ref{tps0}, we may obtain a pseudo-real Riemann surface $S$ of genus $g_0=2^n+1$ with ${\rm Aut}(S) =K_{n}$ and ${\rm Aut}^{+}(S) =K_{n}^+$. 
It remains to show that $g_{0}$ is minimal with that property. By the contrary, assume there is some pseudo-real Riemann surface $R$ of genus $g<g_{0}$ such that $K_n={\rm Aut}(R)$ and  $K_n^+={\rm Aut}^{+}(R)$. In this case, there is an NEC group $\Delta$ of signature $(\gamma;-;[m_1,\cdots, m_r];\{-\})$, with $\gamma>0$ and $g=1+\frac{|K_n|}{2}|\Delta|^*$, such that ${\mathbb H}/\Delta=R/K_{n}$, and there is an epimorphism $\theta:\Delta \to K_{n}$ with kernel a torsion-free Fuchsian group uniformizing $R$. The condition $g<g_{0}$ asserts that  $|\Delta|^*<1$. Therefore, $\gamma \in \{1, 2\}$. 
 
 Finally, in both cases ({\bf 1}) and ({\bf 2}), by  Lemma  \ref{autkn} and similar  arguments as in the proof of Theorem \ref{mps+0} we obtain that $K_n$ can not acts on pseudo-real surfaces of genus $g$. \end{proof}
  
 The above provides the following. 

\begin{coro}\label{pkn}
If $n\geq 4$, then $\psi^*(K_n)=2^n+1$.
\end{coro}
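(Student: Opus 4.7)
The plan is to derive Corollary \ref{pkn} directly from Theorem \ref{psKn} by unwinding the definition of the strong pseudo-real genus. My first step is to observe that $\psi^{*}(K_{n})$ is the minimum of $\sigma_{ps}(K_{n},L)$ as $L$ runs over all index two subgroups of $K_{n}$ containing every involution of $K_{n}$. By the classification recalled at the beginning of Section \ref{CFinally}, the only such candidates (up to ${\rm Aut}(K_{n})$-equivalence) are $L=QA_{n}$ and $L=K_{n}^{+}$, both of which contain the three involutions of $K_{n}$. Consequently
$$
\psi^{*}(K_{n}) \;=\; \min\bigl\{\sigma_{ps}(K_{n},QA_{n}),\ \sigma_{ps}(K_{n},K_{n}^{+})\bigr\}.
$$

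Next I would substitute the two values supplied by Theorem \ref{psKn}, namely $\sigma_{ps}(K_{n},K_{n}^{+})=2^{n}+1$ for every $n\geq 4$, together with $\sigma_{ps}(K_{n},QA_{n})=3(2^{n-1}-1)$ for $n\geq 5$ and $\sigma_{ps}(K_{4},QA_{4})=17$. A short numerical comparison finishes the argument: for $n=4$ both candidates equal $17=2^{4}+1$, while for $n\geq 5$
$$
3(2^{n-1}-1)\,-\,(2^{n}+1) \;=\; 2^{n-1}-4 \;\geq\; 2^{4}-4 \;>\; 0,
$$
so $2^{n}+1<3(2^{n-1}-1)$. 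In either case the minimum equals $2^{n}+1$, which is the asserted value of $\psi^{*}(K_{n})$.

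The entire deduction is routine once Theorem \ref{psKn} is in hand; the only potentially delicate point, and hence the main obstacle from the present viewpoint, is making sure that no index two subgroup of $K_{n}$ containing all the involutions has been overlooked in the minimum defining $\psi^{*}$. This is guaranteed by the explicit description of the index two subgroups of $K_{n}$ given at the start of Section \ref{CFinally}, together with the fact that automorphisms of $K_{n}$ carry a pseudo-real action to a topologically equivalent one of the same genus, so any ${\rm Aut}(K_{n})$-conjugate of $QA_{n}$ or $K_{n}^{+}$ contributes the same value of $\sigma_{ps}$ and needs no separate treatment.
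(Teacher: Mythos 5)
Your proposal is correct and is essentially the paper's own (implicit) argument: Corollary \ref{pkn} is stated there as an immediate consequence of Theorem \ref{psKn}, obtained by taking the minimum of $\sigma_{ps}(K_n,QA_n)$ and $\sigma_{ps}(K_n,K_n^{+})$ over the index two subgroups of $K_n$ described at the start of Section \ref{CFinally}. Your extra remark that ${\rm Aut}(K_n)$-equivalent subgroups give the same value of $\sigma_{ps}$ is a sensible (and slightly more careful) way to justify that no further candidate subgroup needs separate treatment.
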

%\medskip    
   
\subsection*{Some computations and questions} Using the automatized libraries of GAP \cite{GAP}, we find groups $G$ containing $QA_n$ as an index two subgroup, and also containing 
other different index two subgroups $H_i$ such that $G$, $QA_n$ and $H_i$ have the same number of involutions. For example, if $n=4$, we note that $G$ is isomorphic either to $K_4$ (Section \ref{Scaqan}) or to $G_1=\langle a,b,c:\ a^4=1, b^4=a^2, c^2=bab^{-1}=a^{-1}, ac=ca, cbc^{-1}=a^{-1}b^3\rangle$. By Theorem \ref{psKn},  $\sigma_{ps}(K_4, QA_4)=\psi^*(K_4)=17$. On the other hand, $G_1$ admits only two subgroups of index two ($QA_4$ and $C_2\times Q_8$). Using the capabilities of MAGMA \cite{Magma}, we get $\sigma_{ps}(G_1, QA_4)=21$, $\sigma_{ps}(G_1, C_2\times Q_8)=17$ and $\psi^*(G_1)=17$. So, we determine that $\psi^*(G_1)=\psi^*(K_4)$ and $\sigma_{ps}(K_4, QA_4)<\sigma_{ps}(G_1, QA_4)$.
  \medskip
  
   Finally, for $n\geq 5$ we add a few questions.
   
\noindent(1) What are the values of $\psi(G)$ and $\psi^*(G)$\,? \\
\noindent(2) Is there any closed relationship between $\psi^*(G)$ and $\psi^*(K_n)$\,? \\
\noindent(3) Is there any closed relationship between $\sigma_{ps}(G, QA_n)$ (when it is defined) and $\sigma_{ps}(K_n, QA_n)$\,? \\
\noindent(4) What is the value of $M^+_{QA_n}(g)$\,?

%%%%%%%%%%%%%%%%%%%%%%%%%%
%%%%%%%%%%%%%%%%%%%%%%%%%%
\section{Minimal genus actions of $QA_{n}$ on closed Klein surfaces} 
A compact hyperbolic Klein surface $X$ can be seen as a quotient $X=S/\langle \tau \rangle$, where $S$ 
is a closed Riemann surface of genus $g\geq 2$ (called the {\it algebraic genus} of $X$) and $\tau \in {\rm Aut}(S)$ is an anticonformal involution. The boundary of $X$ is provided by the projection of the fixed points of $\tau$.  The topological genus of  $X$ is  $\gamma=(g-k+1)/\eta$, where $\eta= 2$ if $S/\langle \tau \rangle$ is orientable and $\eta=1$ otherwise, and 
 $k$ is the number of connected components of the fixed points of $\tau$. In \cite[Theorem 2.5]{Bu}, it was observed that every finite group 
$G$ can be realized as a group of automorphisms of some closed Klein surface (i.e., non-orientable Klein surface with empty boundary). The minimal topological genus $\tilde{\sigma}(G)$ of these surfaces is called the {\em symmetric crosscap number} of $G$. It is know that $3$ cannot be the symmetric crosscap number of any group \cite{May0} and, also that if $H$ is a subgroup of $G$, then 
\begin{equation}\label{mcg}
\tilde{\sigma}(H)\leq \tilde{\sigma}(G).
\end{equation}
 
 \begin{theo}\label{ncqa}
If $n \geq 4$, then $\tilde{\sigma}(QA_{n})=2^{n-2}+2$ and this action of $QA_{n}$ is unique.
\end{theo}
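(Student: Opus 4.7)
The plan is to apply the Riemann--Hurwitz identity $\gamma = 2 + |QA_n|\cdot|\Delta|^*$, which reduces the theorem to finding a proper NEC group $\Delta$ of minimal reduced area admitting a surface-kernel epimorphism $\theta:\Delta \to QA_n$ with $\theta(\Delta^+)=QA_n$ (the latter being equivalent to $\ker\theta$ being non-orientable). Achieving $\gamma = 2^{n-2}+2$ amounts to $|\Delta|^* = \tfrac14$. The argument splits into an existence construction, a minimality proof, and a topological rigidity step.

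For the upper bound, I will take $\Delta$ with signature $(0;+;[-];\{(2),(-)\})$, whose reduced area equals $\tfrac14$, and define
$$\theta(c_{10})=y,\quad \theta(c_{11})=yx^{2^{n-2}},\quad \theta(c_{20})=x^{2^{n-2}},\quad \theta(e_1)=x,\quad \theta(e_2)=x^{-1}.$$
A routine check shows the defining relations of $\Delta$ hold, $\theta$ is surjective (its image contains $x$ and $y$), every torsion generator is mapped to an element of the correct order (so $\ker\theta$ is torsion-free), and $\theta(\Delta^+)=QA_n$ because $\theta(\Delta^+)$ contains $\theta(e_1)=x$ and $\theta(c_{10}c_{20})=yx^{2^{n-2}}$. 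Formula~(\ref{E22}) then gives a closed non-orientable Klein surface of topological genus $2+2^{n}\cdot \tfrac14 = 2^{n-2}+2$.

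For the lower bound and the uniqueness, two structural observations drive the analysis: (i) since $QA_n$ is a $2$-group, every $m_i$ and $n_{ij}$ is a power of $2$; and (ii) since the three involutions of $QA_n$, namely $I_1=y$, $I_2=yx^{2^{n-2}}$, $I_3=x^{2^{n-2}}$, together with the identity form a Klein $4$-group, any product of two of them has order at most $2$, forcing every $n_{ij}$ to equal $2$. A further key fact is that every element of $QA_n$ of order at most $2^{n-2}$ lies in the index-$2$ subgroup $H_1=\langle x^2,y\rangle$; hence for $\theta(\Delta^+)=QA_n$ at least one orientation-preserving generator of $\Delta$ must map to an element of order $2^{n-1}$. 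A finite bookkeeping using the area formula shows that the only proper NEC signatures satisfying (i)--(ii) with $|\Delta|^*<\tfrac14$ are $(0;+;[2^j];\{(2)\})$ for $3\leq j\leq n-1$; in each of these, the relation $e_1=\beta_1^{-1}$ together with the Klein $4$-group structure of involutions forces $\theta(\Delta^+)\subseteq \langle\theta(\beta_1),I_3\rangle$, which is contained in one of the cyclic subgroups $H_2=\langle x\rangle$ or $H_3=\langle yx\rangle$, contradicting the surjectivity of $\theta|_{\Delta^+}$. This establishes $\tilde\sigma(QA_n)\geq 2^{n-2}+2$.

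Finally, among the signatures of reduced area exactly $\tfrac14$ compatible with (i)--(ii), all but $(0;+;[-];\{(2),(-)\})$ have every elliptic generator of order at most $2^{n-2}$ and every reflection an involution, so every generator maps into $H_1$ and $\theta$ cannot be surjective; the minimal signature is therefore forced. Within this signature the requirement $\theta(\Delta^+)=QA_n$ forces $\theta(e_1)$ to have order $2^{n-1}$, and since $\theta(c_{20})$ must centralise $\theta(e_1)$, which generates one of $H_2,H_3$, one obtains $\theta(c_{20})=I_3$; the involutions $\theta(c_{10}),\theta(c_{11})$ must then be $\{y, yx^{2^{n-2}}\}$ in some order, linked by conjugation through $\theta(e_1)$. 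Using the automorphism of $\Delta$ that swaps $c_{10}\leftrightarrow c_{11}$ together with the automorphisms $\psi_{u,a,v}$ of $QA_n$ in Section~\ref{Section 1} (which allow arbitrary odd rescaling of $x$ and interchange $H_2\leftrightarrow H_3$), every such $\theta$ is seen to be equivalent to the one constructed above, yielding the claimed topological rigidity. The main obstacle is the exhaustive case analysis in the lower-bound step; it is made tractable precisely by the Klein $4$-group reduction~(ii) and the systematic observation that $H_1$ absorbs every element of $QA_n$ of order at most $2^{n-2}$.
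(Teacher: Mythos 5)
Your construction on the signature $(0;+;[-];\{(2),(-)\})$ is the same as the paper's, and your rigidity analysis inside that signature is essentially the paper's as well; but your lower bound is a genuinely different route. The paper gets $\tilde{\sigma}(QA_n)\geq 2^{n-2}+1$ from the subgroup inequality $\tilde{\sigma}(H)\leq\tilde{\sigma}(QA_n)$ applied to $H\in\{C_{2^{n-1}},C_{2^{n-2}}\times C_2\}$ and then excludes $2^{n-2}+1$ by citing the Etayo--Mart\'inez table, whereas you enumerate directly all proper NEC signatures of reduced area below $\tfrac14$ compatible with the $2$-group condition and the Klein four-group of involutions; this is more self-contained and your enumeration below $\tfrac14$ is in fact correct (only $(0;+;[2^j];\{(2)\})$, $3\leq j\leq n-1$, survives). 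Two small slips there are harmless: for $3\leq j\leq n-2$ the subgroup $\langle\theta(\beta_1),x^{2^{n-2}}\rangle$ lies in $H_1$, not in $H_2$ or $H_3$ (but then $\theta$ is not even onto $QA_n$, by your own observation (iii), so the conclusion stands), and your area argument tacitly assumes hyperbolicity, so strictly you should note that $QA_n$ is not among the groups acting on Klein surfaces of topological genus $1$ or $2$ (the paper gets this for free from $\tilde{\sigma}(C_{2^{n-1}})=2^{n-2}+1$).

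The genuine gap is in the step ``the minimal signature is therefore forced,'' which also underlies the uniqueness claim. Your list of reduced-area-$\tfrac14$ signatures implicitly assumes every generator is an elliptic, reflection or boundary generator, but there are proper NEC signatures of reduced area exactly $\tfrac14$ containing glide reflections, namely $(1;-;[2,4];\{-\})$ and $(1;-;[-];\{(2)\})$, and for these the dismissal ``every generator maps into $H_1$, hence $\theta$ is not surjective'' does not apply, since $\theta(d_1)$ is unconstrained. Both cases can be excluded, but you must argue it: every square in $QA_n$ lies in $\langle x^2\rangle\subset H_1$, so for $(1;-;[2,4];\{-\})$ the subgroup $\Delta^+$ is generated by $d_1^2$, the elliptic generators (of orders $2,4\leq 2^{n-2}$) and their conjugates, whence $\theta(\Delta^+)\subseteq H_1$ and the kernel would be an orientable surface group (a Riemann surface action, not a closed non-orientable Klein surface); and for $(1;-;[-];\{(2)\})$ the long relation gives $\theta(e_1)=\theta(d_1)^{-2}\in\langle x^2\rangle$, which centralizes the Klein four-group of involutions, forcing $\theta(c_{10})=\theta(c_{11})$ and hence the order-two element $c_{10}c_{11}$ into the kernel. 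Without treating these two signatures, neither the forcing of $(0;+;[-];\{(2),(-)\})$ nor the topological uniqueness at genus $2^{n-2}+2$ is established; with them added, your argument goes through.
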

\begin{proof}

Let  $X = \h/\Gamma$ be a closed Klein surface of topological genus $\gamma > 3$, such that $QA_{n} \leq {\rm Aut}(X)$. Then there exists an NEC group $\Delta$ such that $\Gamma$ is a normal subgroup of $\Delta$ with $QA_{n} = \Delta / \Gamma$ and (Riemann-Hurwitz formula) we have $\gamma-2 = |QA_{n}| |\Delta|^*$, where $|\Delta|^*$ is the reduced area of $\Delta$ (Section \ref{S2}). In this way, to compute $\tilde{\sigma}(QA_{n})$, we need to minimize the value of $|\Delta|^*$ among all those possible NEC groups $\Delta$. 
     
On the other hand, for $H \in\lbrace C_{2^{n-1}}, C_{2^{n-2}}\times C_2 \rbrace$ with $\tilde{\sigma}(H)>1$, we have \cite{Bu, ECM}: $$\tilde{\sigma}(C_{2^{n-1}})=2^{n-2}+1,\ \tilde{\sigma}(C_{2^{n-2}}\times C_2)=2^{n-2}.$$
 
The above, together to the inequality (\ref{mcg}), asserts that 
$$(*)\qquad 2^{n-2}+1 \leq \tilde{\sigma}(QA_{n}).$$ 

Since, the elements of order two of $QA_n$ are $y$, $x^{2^{n-2}}$ and $yx^{2^{n-2}}$ (Section \ref{Section 1}), we consider an NEC group $\Delta$ with signature $(0;+;[-];\{(-),(2)\})$ and presentation 
$\Delta=\langle e_1,\! e_2,\! c_{1j},\! c_{2j}: c_{1j}^2=c_{2j}^2=(c_{20}c_{21})^2=1,e_1e_2=e_ic_{i0}e_i^{-1}c_{i1}=1 \rangle$, and an epimorphism $\theta:\Delta\to QA_n$ given by
$\theta(e_1)=x^{-1};\theta(e_2)=x; \theta(c_{1j})=x^{2^{n-2}}; \theta(c_{20})=y; \theta(c_{21})=yx^{2^{n-2}}.$

Observe that $\theta(c_{21}c_{10})=y$, thereby $\theta(\Delta^+)=QA_n$ and there exists orientation-reversing elements in ${\rm ker}(\theta)$. As $|\Delta|^*=1/4$, these implies that $\tilde{\sigma}(QA_{n})\leq 2^{n-2}+2$. Only we need to prove that this bound cannot be lowered. The inequality $(*)$ asserts that the unique value to be discarded is $2^{n-2}+1$.

As the group $QA_n$ can be generated by two elements of order 2 and $2^{n-1}$, from \cite[Table 1]{ECM} we see that there is not admissible signatures such that the genus is $2^{n-2}+1$. Therefore, $\tilde{\sigma}(QA_{n})=2^{n-2}+2$.

In order to prove the uniqueness of the $QA_{n}$-action we proceed as follows.
As noted above, the minimal action is provided by 
an NEC group $\Delta$ with signature of the form $(0;+;[-];\{(-), (2)\})$. An epimorphism $\theta:\!\Delta\to QA_{n}$ must satisfy that $\theta(e_{1})$, $\theta(e_{2})\! \in\! \lbrace x^{l}, yx^{l}, l-{\rm odd} \rbrace\subset \Delta^+;$  $\theta(c_{is_i})\!\in\! \Delta\setminus \Delta^{+};$ $\theta(e_{i})\theta(c_{i0}) \theta(e_{i})^{-1}\theta(c_{i1})=1$, $\theta(e_{2})=\theta(e_1)^{-1}.$
By post-composing $\theta$ by automorphisms of $QA_{n}$, we may obtain that the unique epimorphism is given by
$\theta(e_1)=x^{-1};\theta(e_2)=x; \theta(c_{10})=\theta(c_{11})=x^{2^{n-2}}; \theta(c_{20})=y; \theta(c_{21})=yx^{2^{n-2}}.$ 
\end{proof} 
  
%%%%%%%%%%%%%%%%%
%%%%%%%%%%%%%%%%%%
\medskip
\noindent{\bf Acknowledgements}\\
The results of this article are mostly based on the second author's Ph.D thesis.


\begin{thebibliography}{99}
\bibitem{ABCNPW}
M. Arbo, K. Benkowski, B. Coate, H. Nordstrom, C. Peterson and A. Wootton.
The genus level of a group. 
{\it Involve a journal of mathematics} {\bf 2} (2009), 323--340.

\bibitem{BGH}
C. Bagi\'nski, G. Gromadzki and R. A. Hidalgo.
On purely non-free finite actions of abelian groups on compact surfaces.
{\it Arch. Math.} {\bf 109} (2017), 311--321.


\bibitem{BRT}
A. Behn, A. M. Rojas and M. Tello-Carrera.
A SAGE Package for $n$-Gonal Equisymmetric Stratification of $\mathcal{M}_g$.
{\it  Experimental Mathematics} (2020), 1--16. 

 \bibitem{Magma}
 W. Bosma, J. Cannon and C. Playoust. {\em  The {M}agma algebra system. {I}. {T}he user language.}  Computational algebra and number theory (London, 1993),  J. Symbolic Comput.  24 (1997), 235--265.

\bibitem{Br00}
T. Breuer. 
{\it Characters and automorphism groups of compact Riemann surfaces}. 
London Mathematical Society Lecture Note Series {\bf 280}, Cambridge University Press, Cambridge, 2000. 

\bibitem{BrW}
S. A. Broughton and A. Wootton. Cyclic $n$-gonal surfaces, https://arxiv.org/pdf/1003.3263.pdf. 

\bibitem{Bu0}
E. Bujalance.
Normal N.E.C. signatures.
{\it  Illinois journal of mathematics} {\bf 26}  (1982), 519--530.

\bibitem{Bu}
E. Bujalance.
Cyclic groups of automorphisms of compact nonorientable Klein surface without boundary.
{\it Pacific J. Math.} {\bf 109} (1983),  279--289.
 
 \bibitem{BuEtaGamGro}
 E. Bujalance, J. J. Etayo, J. M. Gamboa and G. Gromadzki.
 {\it Automorphism groups of compact bordered Klein surfaces,  A combinatorial approach.}
 Lecture Notes in Mathematics {\bf 1439}, Springer-Verlag, Berlin, 1990.
 
 \bibitem{BuCC20}
 E. Bujalance, F. J. Cirre and M. D. E. Conder. 
 Bounds on the orders of groups of automorphisms of a pseudo-real surfaces of given genus. 
 {\it J. London Math. Soc.} {\bf 101} (2020), 877--906.
 
 \bibitem{Conder:lista}
M. D. E. Conder.
https://www.math.auckland.ac.nz/$\sim$conder/StrongSymmGenusSmallGroups127.txt

\bibitem{CL}
 M. D. E. Conder and S. Lo.
 The pseudo-real genus of a group. 
 {\it Journal of Algebra} {\bf 561} (2020), 149--162.
 
 
 \bibitem{D}
 R. Diestel.
 {\it Graph Theory}, 3rd ed., Springer, 2005.

 \bibitem{ECM}
 J. J. Etayo and E. Mart\'inez.
 The Symmetric Crosscap Number Of The Groups Of Small-Order.
 {\it Journal of Algebra and Its Applications} {\bf 12} (2013), 125--164 
 
 \bibitem{FGP}
P. Frediani, A. Ghigi and M. Penegini. 
Shimura varieties in the Torelli locus via Galois coverings. {\it Int. Math. Res. Notices}.
2015 (20): 10595--10623. 
 
 \bibitem{GAP}
  The GAP~Group: \emph{GAP -- Groups, Algorithms, and Programming, Version 4.8.8}; (2017). 

\bibitem{Gre}
L. Greenberg.
Maximal Fuchsian groups.
{\it Bull. Amer. Math. Soc.} {\bf 69} (1963), 569--573.


\bibitem{Greenberg}
L. Greenberg.
Conformal Transformations of Riemann Surfaces.
{\it Amer. J. of Math.} {\bf 82} (2) (1960), 749--760.

\bibitem{GroT}
J. L. Gross and T. W. Tucker. 
{\it Topological graph theory}, John Wiley and Sons, 1987.

\bibitem{Gro}
A. Grothendieck.
Esquisse d'un Programme (1984). In {\it Geometric Galois Actions}. L. Schneps and P. Lochak eds.  
{\it London Math. Soc. Lect. Notes Ser.} {\bf 242}. Cambridge University Press, Cambridge, (1997), 5--47.

\bibitem{Ha}
 W. J. Harvey.
 Cyclic groups of automorphisms of a compact Riemann surface. 
 {\it Quart. J. Math. Oxford Ser.}  {\bf 17} (1966), 86--97.
 
 \bibitem{HMQ22}
 R. A. Hidalgo, Y. Mar\'in Montilla and S. Quispe. Generalized quasi-dihedral group as automorphism group of Riemann surfaces, Preprint 2022. https://arxiv.org/abs/2210.01577
 
 \bibitem{Hurwitz}
A. Hurwitz. 
\"Uber algebraische gebilde mit eindeutigen transformationen in siche.
{\it Math. Ann.} {\bf 41} (1893), 403--442.

\bibitem{KR}
E. Kani and M. Rosen.
Idempotent relations and factors of Jacobians. 
{\it Math. Ann.} {\bf 284} (1989), 307--327.

\bibitem{LR}
H. Lange and S. Recillas.
Abelian varieties with group actions.
{\it J. Reine Angew. Mathematik} {\bf  575} (2004), 135--155.
   

\bibitem{Mac}
 A.  M. Macbeath.
 The classification of non-euclidean plane crystallographic groups.
 {\it Canad. J. Math.} {\bf 19} (1967), 1192--1205. 
   


\bibitem{MZ7}
C. L. May and J. Zimmerman.
Groups of small strong symmetric genus.
{\it J. Group Theory} {\bf 3} (2000), 233--245.



\bibitem{May0}
C. L. May.
The symmetric crosscap number of a group.
{\it Glasgow Math. J.} {\bf  43} (2001), 399--410.
   
\bibitem{May3}
C. L. May and J. Zimmerman.
There is a group of every strong symmetric genus.
{\it Bull. London Math. Soc.} {\bf 35} (2003), 433--439.

\bibitem{May10}
C. L. May and J. Zimmerman.
The $2$-groups of odd strong symmetric genus.
{\it Journal of Algebra and Its Applications} {\bf 9} (2010), 465--481.


 \bibitem{AR}
 A. Rojas.
 Group actions on Jacobian varieties.
 {\it Rev. Mat. Iber.} {\bf 23} (2007), 397--420.  


\bibitem{S.}
D. Singerman. 
Finitely maximal Fuchsian groups.
{\it J. London Math. Soc.} {\bf 6}  (1972), 29--38.


\bibitem{Wil}
H. C. Wilkie.
On non-Euclidean crystallographic groups.
{\it Math. Z.} {\bf 91} (1996),   87--102.
    
\end{thebibliography}
\end{document}